\theoremstyle{definition}
\newtheorem{1def}{Definition}[section]
\theoremstyle{plain}
\newtheorem{thm}[1def]{Theorem}
\newtheorem{lem}[1def]{Lemma}
\newtheorem{pro}[1def]{Proposition}
\newtheorem{cor}[1def]{Corollary}
\theoremstyle{remark}
\numberwithin{equation}{section}
\begin{document}
\title{Some $q$-exponential formulas for finite-dimensional $\square_q$-modules}
\author{Yang Yang}
\address{Department of Mathematics, University of Wisconsin, Madison, WI 53706, USA}
\email{yyang@math.wisc.edu}

\begin{abstract}
We consider the algebra $\square_q$ which is a mild generalization of the quantum algebra $U_q(\frak{sl}_2)$. The algebra $\square_q$ is defined by generators and relations. The generators are $\{x_i\}_{i\in \mathbb{Z}_4}$, where $\mathbb{Z}_4$ is the cyclic group of order $4$. For $i\in \mathbb{Z}_4$ the generators $x_i$,$x_{i+1}$ satisfy a $q$-Weyl relation, and $x_i$,$x_{i+2}$ satisfy a cubic $q$-Serre relation. For $i\in \mathbb{Z}_4$ we show that the action of $x_i$ is invertible on every nonzero finite-dimensional $\square_q$-module. We view $x_i^{-1}$ as an operator that acts on nonzero finite-dimensional $\square_q$-modules. For $i\in \mathbb{Z}_4$, define $\mathfrak{n}_{i,i+1}=q(1-x_ix_{i+1})/(q-q^{-1})$. We show that the action of $\mathfrak{n}_{i,i+1}$ is nilpotent on every nonzero finite-dimensional $\square_q$-module. We view the $q$-exponential ${\rm {exp}}_q(\mathfrak{n}_{i,i+1})$ as an operator that acts on nonzero finite-dimensional $\square_q$-modules. In our main results, for $i,j\in \mathbb{Z}_4$ we express each of ${\rm {exp}}_q(\mathfrak{n}_{i,i+1})x_j{\rm {exp}}_q(\mathfrak{n}_{i,i+1})^{-1}$ and ${\rm {exp}}_q(\mathfrak{n}_{i,i+1})^{-1}x_j{\rm {exp}}_q(\mathfrak{n}_{i,i+1})$ as a polynomial in $\{x_k^{\pm 1}\}_{k\in \mathbb{Z}_4}$.\\

\bigskip
\noindent
{\bf Keywords:} $q$-exponential function, quantum algebra, equitable presentation.

\hfil\break
\noindent {\bf 2010 Mathematics Subject Classification}.\\
Primary: 33D80.\\
Secondary: 17B37.
\end{abstract}
\maketitle
\section{Introduction}
This paper is about a certain algebra $\square_q$; we will recall the definition shortly. Broadly speaking it can be viewed as a generalization of the quantum algebra $U_q(\frak{sl}_2)$. In order to motivate our results we make some comments about $U_q(\frak{sl}_2)$. We will work with the equitable presentation, which was introduced in \cite{equit} and investigated further in \cite{alnajjar, neubauer, huang, irt, uqsl2hat, nonnil, qtet, tersym, uawe,fduq,boyd,paul}. Let $\mathbb{F}$ denote an algebraically closed field. Fix $0\ne q\in \mathbb{F}$ that is not a root of unity. In the equitable presentation, the $\mathbb{F}$-algebra $U_q(\frak{sl}_2)$ has generators $x, y^{\pm 1}
, z$ and relations $yy^{-1}=y^{-1}y=1$,
\begin{equation*}
\frac{qxy-q^{-1}yx}{q-q^{-1}}=1,\quad\frac{qyz-q^{-1}zy}{q-q^{-1}}=1,\quad\frac{qzx-q^{-1}xz}{q-q^{-1}}=1.
\end{equation*}
Define
\begin{equation*}
n_x=\frac{q(1-yz)}{q-q^{-1}},\quad n_y=\frac{q(1-zx)}{q-q^{-1}},\quad  n_z=\frac{q(1-xy)}{q-q^{-1}}.
\end{equation*}

On every nonzero finite-dimensional $U_q(\frak{sl}_2)$-module, $x, y, z$ are invertible (see \cite[Lemma~5.15]{luz}) and $n_x, n_y, n_z$ are nilpotent (see \cite[Lemma~5.14]{luz}). Recall from \cite[p.~204]{t1} the $q$-exponential function
\begin{equation*}
{\rm {exp}}_q (T)=\sum_{n\in\mathbb{N}}\frac{q^{n \choose 2 }}{[n]_q^!}T^n.
\end{equation*}

In \cite[Sections~5,\,6]{equit} it was shown that the following equations hold on every nonzero finite-dimensional $U_q(\frak{sl}_2)$-module:
\begin{subequations}
\begin{align}
{\rm {exp}}_q (n_x)x\,{\rm {exp}}_q (n_x)^{-1}=x+z-z^{-1},\\
{\rm {exp}}_q (n_x)y\,{\rm {exp}}_q (n_x)^{-1}=z^{-1},\\
{\rm {exp}}_q (n_x)z\,{\rm {exp}}_q (n_x)^{-1}=zyz,\\
{\rm {exp}}_q (n_x)^{-1}x\,{\rm {exp}}_q (n_x)=x+y-y^{-1},\\
{\rm {exp}}_q (n_x)^{-1}y\,{\rm {exp}}_q (n_x)=yzy,\\
{\rm {exp}}_q (n_x)^{-1}z\,{\rm {exp}}_q (n_x)=y^{-1}.
\end{align}
\end{subequations}

Cyclically permuting $x,y,z$ in the above equations, we get $12$ more equations. Our goal in this paper is to find analogous equations that apply to $\square_q$.

We now discuss the algebra $\square_q$. This algebra was introduced in \cite[Definition~5.1]{p1}. We mention some algebras that are related to $\square_q$. For the positive part $U_q^{+}(\widehat {\frak {sl}}_2)$ of $U_q(\widehat {\frak {sl}}_2)$ (see \cite[\rm Definition~1.1]{nonnil}), there exists an injective algebra homomorphism from $U_q^{+}(\widehat {\frak {sl}}_2)$ to $\square_q$ (see \cite[Proposition~5.5]{p1}). For the $q$-Onsager algebra $\mathcal{O}_q$ (see \cite[\rm Section~2]{bas}), there exists an injective algebra homomorphism from $\mathcal{O}_q$ to $\square_q$ (see \cite[Proposition~11.9]{p1}). For the quantum loop algebra $U_q(L(\frak{sl}_2))$, there exists an injective algebra homomorphism from $\square_q$ to $U_q(L(\frak{sl}_2))$ (see \cite[Proposition~5.5]{p1} and \cite[Propostions~4.1, 4.3]{miki}). For the $q$-tetrahedron algebra $\boxtimes_q$ (see \cite[\rm Definition~6.1]{qtet}), there exists an injective algebra homomorphism from $\square_q$ to $\boxtimes_q$ (see \cite[Proposition~5.5]{p1} and \cite[Propostions~4.1]{miki}).

The $\mathbb{F}$-algebra $\square_q$ is defined as follows (formal definitions start in Section 2). The generators are $\{x_i\}_{i\in \mathbb{Z}_4}$, where $\mathbb{Z}_4=\mathbb{Z}/4\mathbb{Z}$ is the cyclic group of order $4$. The relations are
\begin{gather*}
\frac{qx_ix_{i+1}-q^{-1}x_{i+1}x_i}{q-q^{-1}}=1, \\
x_i^3x_{i+2}-[3]_qx_i^2x_{i+2}x_i+[3]_qx_ix_{i+2}x_i^2-x_{i+2}x_i^3=0,
\end{gather*}
for $i\in \mathbb{Z}_4$. We will state our main results after some preliminary remarks. We show that for $i\in \mathbb{Z}_4$ the action of $x_i$ is invertible on every nonzero finite-dimensional $\square_q$-module. We view $x_i^{-1}$ as an operator that acts on nonzero finite-dimensional $\square_q$-modules. For $i\in \mathbb{Z}_4$ define
\begin{equation*}
\mathfrak{n}_{i,i+1}=\frac{q(1-x_ix_{i+1})}{q-q^{-1}}.
\end{equation*}
We show that the action of $\mathfrak{n}_{i,i+1}$ is nilpotent on every nonzero finite-dimensional $\square_q$-module. We view the $q$-exponential ${\rm {exp}}_q(\mathfrak{n}_{i,i+1})$ as an operator that acts on nonzero finite-dimensional $\square_q$-modules. For $i,j\in \mathbb{Z}_4$ consider the two expressions
\begin{equation}
\label{equ:11}
{\rm {exp}}_q(\mathfrak{n}_{i,i+1})x_j{\rm {exp}}_q(\mathfrak{n}_{i,i+1})^{-1}, \qquad {\rm {exp}}_q(\mathfrak{n}_{i,i+1})^{-1}x_j{\rm {exp}}_q(\mathfrak{n}_{i,i+1}).
\end{equation}
For each expression in (\ref{equ:11}), expand both $q$-exponential terms. This yields a double sum with infinitely many terms. A natural question is, to what extent can this double sum be simplified? In our main results we will show that in fact, each double sum is a polynomial in $\{x_k^{\pm 1}\}_{k \in \mathbb{Z}_4}$. These results are Theorems {\ref {pro:81}}, {\ref {pro:82}} and Theorems {\ref {pro:vip1}}--{\ref {pro:vip4}}.

We mention another motivation for studying (\ref{equ:11}). Near the equation (1.1) we gave $18$ equations for $U_q(\frak{sl}_2)$. These equations were used to construct a rotator for $U_q(\frak{sl}_2)$ (see \cite[Definition~9.5]{luz}). These equations were also used to describe the Lusztig operators (see \cite{lus1,lus2}) for $U_q(\frak{sl}_2)$ in the equitable presentation (see \cite[Theorem~9.9]{luz}). We hope to obtain similar results for $\square_q$.

We mention a conceptual interest for finding a rotator of $\square_q$. Let $\rho$ denote the automorphism of $\square_q$ that sends $x_i\mapsto x_{i+1}$ for $i\in \mathbb{Z}_4$ (see Lemma {\ref{lem:m1}}). Let $V$ denote a finite-dimensional irreducible $\square_q$-module (see \cite[Definition~6.8]{yy}). Then the $\square_q$-modules $V$ and $V$ twisted via $\rho^2$ are isomorphic (see \cite[Corollary~1.7]{yy}). We hope that this isomorphism is given by the rotator in a canonical manner.

The paper is organized as follows. Section 2 contains the preliminaries. Section 3 contains some basic facts about $\square_q$. In Section 4 we describe some isomorphisms and antiisomorphisms for $\square_q$. In Section 5 we show that the action of each $x_i$ is invertible on every nonzero finite-dimensional $\square_q$-module. In Section 6 we show that the action of each $\mathfrak{n}_{i,i+1}$ is nilpotent on every finite-dimensional $\square_q$-module. In Section 7 we review the $q$-exponential function, and apply
it to $\mathfrak{n}_{i,i+1}$. In Sections 8 and 9 we prove our main results.

\section{Preliminaries}
Throughout the paper, we fix the following notation. Let $\mathbb{F}$ denote an algebraically closed field. Recall the set of natural numbers $\mathbb{N}=\{0,1,2,\dots\}$ and the ring of integers $\mathbb{Z}=\{0,\pm 1,\pm 2,\dots\}$. Let $\mathbb{Z}_4=\mathbb{Z}/4\mathbb{Z}$ denote the cyclic group of order $4$. We will be discussing algebras. An algebra is meant to be associative and
have a $1$.

Let $V$ denote a nonzero finite-dimensional vector space over $\mathbb{F}$. Let ${\rm{End}}(V)$ denote the $\mathbb{F}$-algebra consisting of the $\mathbb{F}$-linear maps from $V$ to $V$. An element $A\in {\rm{End}}(V)$ is called {\it nilpotent} whenever there exists a positive integer $n$ such that $A^n=0$. By an {\it eigenvalue} of $A$, we mean a root of the characteristic polynomial of $A$.

Fix $0\ne q\in \mathbb{F}$ such that $q$ is not a root of unity. For $n\in \mathbb{Z}$ define
\begin{equation*}
[n]_q = \frac{q^n-q^{-n}}{q-q^{-1}}.
\end{equation*}
For $n\in \mathbb{N}$ define
\begin{equation*}
[n]_q^! = \prod_{i=1}^{n}[i]_q.
\end{equation*}
We interpret $[0]_q^!=1$.

\section{The algebra $\square_q$}
In this section, we recall the algebra $\square_q$.

\begin{1def}
\label{def:box}
\cite[Definition~5.1]{p1}
Let $\square_q$ denote the $\mathbb{F}$-algebra with generators $\{x_i\}_{i\in \mathbb{Z}_4}$ and relations
\begin{gather}
\frac{qx_ix_{i+1}-q^{-1}x_{i+1}x_i}{q-q^{-1}}=1, \label{equ:1}\\
x_i^3x_{i+2}-[3]_qx_i^2x_{i+2}x_i+[3]_qx_ix_{i+2}x_i^2-x_{i+2}x_i^3=0. \label{equ:2}
\end{gather}
\end{1def}
The structure of the algebra $\square_q$ is analyzed in \cite{p1}. We don't need the full strength of the results in \cite{p1}, but we will use the following fact.
\begin{lem}
\label{lem:stu}
The elements $\{x_i\}_{i\in \mathbb{Z}_4}$ are linearly independent in $\square_q$.
\end{lem}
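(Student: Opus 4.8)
The statement claims that the four generators $x_0, x_1, x_2, x_3$ are linearly independent in $\square_q$. The natural strategy is to exhibit a $\square_q$-module together with elements whose images under the action are linearly independent, or—more directly—to produce an algebra homomorphism from $\square_q$ to a concrete algebra in which the images of the $x_i$ are manifestly linearly independent. The cleanest route uses the homomorphisms already advertised in the introduction: there is an injective algebra homomorphism from $\square_q$ into the $q$-tetrahedron algebra $\boxtimes_q$ (or into $U_q(L(\mathfrak{sl}_2))$), cited from \cite[Proposition~5.5]{p1} together with \cite[Proposition~4.1]{miki}. If one is willing to invoke injectivity of that map, the problem reduces to checking linear independence of four specific generators of $\boxtimes_q$, which is part of the structure theory in \cite{qtet}. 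However, that feels like using a sledgehammer, and it also imports the very structure-theoretic machinery the author says is not needed in full.

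A more self-contained approach is to build a small explicit $\square_q$-module by hand. Take a finite-dimensional vector space $V$ with basis $v_0, \dots, v_d$ and define four operators $X_0, X_1, X_2, X_3$ on $V$ satisfying the $q$-Weyl relations \eqref{equ:1} and the cubic $q$-Serre relations \eqref{equ:2}; the evaluation modules for $U_q(L(\mathfrak{sl}_2))$ restricted along the embedding $\square_q \hookrightarrow U_q(L(\mathfrak{sl}_2))$ give exactly such actions, and on these modules the four operators, evaluated at a generic spectral parameter, are visibly linearly independent (e.g. by comparing their actions on a lowest-weight vector, or by comparing diagonal versus off-diagonal matrix entries). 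Concretely, on the two-dimensional evaluation module one already sees that $X_0, X_1, X_2, X_3$ are four distinct $2\times 2$ matrices, and choosing the spectral parameter generically forces any linear dependence $\sum_i c_i X_i = 0$ to have all $c_i = 0$. Since a linear dependence among the $x_i$ in $\square_q$ would be preserved by every module action, this suffices.

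The key steps, in order, are: (1) recall or write down the explicit action of $\square_q$ on an evaluation-type module $V$ — equivalently, pull back the standard $U_q(L(\mathfrak{sl}_2))$-action along the embedding of \cite[Proposition~5.5]{p1}; (2) verify that the four operators $X_i$ satisfy relations \eqref{equ:1} and \eqref{equ:2}, so that $V$ is genuinely a $\square_q$-module (this is automatic if we quote the embedding, and a short direct check otherwise); (3) suppose $\sum_{i\in\mathbb{Z}_4} c_i x_i = 0$ in $\square_q$ with $c_i\in\mathbb{F}$, apply this to $V$ to get $\sum_i c_i X_i = 0$ in ${\rm End}(V)$; (4) read off enough matrix entries (or values on basis vectors) of $\sum_i c_i X_i$ to conclude $c_0 = c_1 = c_2 = c_3 = 0$.

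The main obstacle is step (1)–(2): one must have on hand an \emph{explicit, verified} $\square_q$-action in which the generators are not forced into any accidental linear relation. If the paper instead cites \cite[Lemma]{p1} wholesale for linear independence, the "proof" is a one-line reference; but if a genuine argument is wanted here, the real work is producing the right small module and checking the defining relations. I expect the author to simply cite \cite{p1} for this, since the statement is flagged there as a known fact; the nontrivial content we would otherwise have to reproduce is precisely the construction of a faithful-enough module, which is why the lemma is imported rather than proved.
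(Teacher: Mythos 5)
Your proposal is correct and, in its conclusion, matches the paper exactly: the paper's entire proof of Lemma~\ref{lem:stu} is the one-line citation ``By \cite[Proposition~5.5]{p1},'' precisely as you anticipated. Your sketched alternative via an explicit evaluation module is plausible but unexecuted; since you correctly identified that the lemma is imported rather than proved, no further comparison is needed.
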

\begin{proof}
By \cite[Proposition~5.5]{p1}.
\end{proof}

We now give some formulas for later use.
\begin{lem}
\label{lem:c1}
For $i\in \mathbb{Z}_4$ and $n\in \mathbb{N}$ the following relations hold in $\square_q$:
\begin{gather}
{q^nx_i^nx_{i+1}-q^{-n}x_{i+1}x_i^n}=({q^n-q^{-n}})x_i^{n-1}, \label{equ:c1}\\
{q^nx_ix_{i+1}^n-q^{-n}x_{i+1}^nx_{i}}=({q^n-q^{-n}})x_{i+1}^{n-1}. \label{equ:c2}
\end{gather}
\end{lem}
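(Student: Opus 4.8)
The statement to prove is Lemma~\ref{lem:c1}, asserting the two relations \eqref{equ:c1} and \eqref{equ:c2} for all $n\in\mathbb{N}$.

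\medskip

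The plan is to prove \eqref{equ:c1} by induction on $n$, and then obtain \eqref{equ:c2} by an essentially identical induction (or, better, by invoking a symmetry of the defining relations). For the base cases $n=0$ both sides of \eqref{equ:c1} vanish, and for $n=1$ the relation is exactly the $q$-Weyl relation \eqref{equ:1} after clearing the denominator $q-q^{-1}$. For the inductive step, I would assume \eqref{equ:c1} holds for some $n\ge 1$ and compute $q^{n+1}x_i^{n+1}x_{i+1}$ by writing $x_i^{n+1}x_{i+1}=x_i\cdot(x_i^n x_{i+1})$, substituting the inductive expression for $x_i^n x_{i+1}$, namely $x_i^n x_{i+1}=q^{-2n}x_{i+1}x_i^n+q^{-n}(q^n-q^{-n})x_i^{n-1}\cdot\frac{1}{\,\cdot\,}$ — more precisely, solve the inductive hypothesis for $x_i^n x_{i+1}$ and multiply on the left by $x_i$. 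This reduces everything to commuting a single $x_i$ past $x_{i+1}$, which is handled by the $n=1$ case \eqref{equ:1}. Collecting terms and checking that the scalar coefficients telescope to $q^{n+1}-q^{-(n+1)}$ on the right-hand side finishes the step. The only real content is bookkeeping of powers of $q$; there is no conceptual obstacle.

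\medskip

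For \eqref{equ:c2}, the cleanest route is to observe that the defining relation \eqref{equ:1} is symmetric under the substitution $x_i\leftrightarrow x_{i+1}$ together with $q\leftrightarrow q^{-1}$: indeed $\frac{qx_ix_{i+1}-q^{-1}x_{i+1}x_i}{q-q^{-1}}=1$ becomes, after swapping the two generators and inverting $q$, $\frac{q^{-1}x_{i+1}x_i-qx_ix_{i+1}}{q^{-1}-q}=1$, which is the same equation. Hence any identity derivable from \eqref{equ:1} alone remains valid under this substitution. Applying it to \eqref{equ:c1} turns $q^n x_i^n x_{i+1}-q^{-n}x_{i+1}x_i^n=(q^n-q^{-n})x_i^{n-1}$ into $q^{-n}x_{i+1}^n x_i - q^n x_i x_{i+1}^n = (q^{-n}-q^n)x_{i+1}^{n-1}$, i.e.\ $q^n x_i x_{i+1}^n - q^{-n}x_{i+1}^n x_i = (q^n-q^{-n})x_{i+1}^{n-1}$, which is \eqref{equ:c2}. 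Alternatively, if one prefers to avoid the symmetry argument, \eqref{equ:c2} can be proved by the same direct induction on $n$, now peeling off one factor of $x_{i+1}$ on the right and using \eqref{equ:1} to move $x_i$ past it.

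\medskip

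The main (and only) obstacle is making sure the $q$-power bookkeeping in the inductive step is done correctly, in particular verifying that the coefficient of the ``correction'' term $x_i^{n-1}$ in the hypothesis, after being multiplied by $x_i$ and combined with the commutator contribution from the extra factor, produces exactly $q^{n+1}-q^{-(n+1)}$ times $x_i^n$. Since the relations \eqref{equ:1} involve only the pair $x_i,x_{i+1}$, the cubic $q$-Serre relations \eqref{equ:2} play no role here, so no subtlety beyond elementary algebra arises.
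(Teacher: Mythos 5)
Your proposal is correct and follows essentially the same route as the paper, whose proof is simply ``Use (\ref{equ:1}) and induction on $n$''; your inductive step (peel off one $x_i$, apply the $n=1$ case, and check that the scalar telescopes to $q^{n+1}-q^{-(n+1)}$) is exactly what that entails. The symmetry observation $x_i\leftrightarrow x_{i+1}$, $q\leftrightarrow q^{-1}$ for deducing (\ref{equ:c2}) is a harmless and valid shortcut, since (\ref{equ:c1}) is derived from (\ref{equ:1}) alone.
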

\begin{proof}
Use ({\ref {equ:1}}) and induction on $n$.
\end{proof}

\begin{lem}
For $i\in \mathbb{Z}_4$ and $n\in \mathbb{N}$, the following relation holds in $\square_q$:
\begin{equation}
\begin{aligned}
\label{equ:c3}
x_i^{n}x_{i+2}&= \frac{[n-1]_q[n-2]_q}{[2]_q}x_{i+2}x_i^n-[n]_q[n-2]_qx_ix_{i+2}x_i^{n-1}\\
&+\frac{[n]_q[n-1]_q}{[2]_q}x_i^2x_{i+2}x_i^{n-2}.
\end{aligned}
\end{equation}
\end{lem}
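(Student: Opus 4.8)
The plan is to prove (\ref{equ:c3}) by induction on $n$, using the cubic $q$-Serre relation (\ref{equ:2}) as the mechanism that keeps the number of $x_i$ to the left of $x_{i+2}$ bounded by two.

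First I would dispose of the base cases $n=0,1,2$ by direct substitution. For $n=0$ the right-hand side is $\frac{[-1]_q[-2]_q}{[2]_q}x_{i+2}$; since $[-1]_q=-1$ and $[-2]_q=-[2]_q$, this collapses to $x_{i+2}$, while the remaining two terms vanish because each carries the factor $[n]_q=[0]_q=0$. For $n=1,2$ the terms that would involve the meaningless power $x_i^{n-2}$ all have coefficient $0$ (through the factor $[n]_q[n-1]_q$), so they may be discarded, and what survives reduces to $x_ix_{i+2}$ and $x_i^2x_{i+2}$ respectively. (The case $n=3$ is precisely the rearrangement of (\ref{equ:2}), which is reassuring but not logically needed once the inductive step is in place.)

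For the inductive step I assume (\ref{equ:c3}) at some $n\ge 2$ and left-multiply by $x_i$. This turns the three terms into $x_ix_{i+2}x_i^n$, $x_i^2x_{i+2}x_i^{n-1}$, and $x_i^3x_{i+2}x_i^{n-2}$. The first two are already in the desired normal form; the only offending term is the last, and there I invoke (\ref{equ:2}) in the form $x_i^3x_{i+2}=[3]_qx_i^2x_{i+2}x_i-[3]_qx_ix_{i+2}x_i^2+x_{i+2}x_i^3$. Substituting this and collecting the coefficients of $x_{i+2}x_i^{n+1}$, $x_ix_{i+2}x_i^n$, and $x_i^2x_{i+2}x_i^{n-1}$, the claim for $n+1$ reduces to three scalar identities in $q$-numbers. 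The coefficient of $x_{i+2}x_i^{n+1}$ matches on the nose, while the other two amount to $[n]_q[3]_q=[n-2]_q+[n+1]_q[2]_q$ and $[n-1]_q[3]_q=[n+1]_q+[n-2]_q[2]_q$.

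These last identities are where the real (but light) work sits. I would prove them from the single product rule $[m]_q[2]_q=[m+1]_q+[m-1]_q$, which is immediate from the definition of $[m]_q$, iterated once to give $[m]_q[3]_q=[m+2]_q+[m]_q+[m-2]_q$; both displayed identities then follow by substitution and cancellation. The main thing to watch is the bookkeeping: keeping the three normal-form monomials straight and not dropping a sign when expanding $[3]_q$, together with the observation that the restriction $n\ge 2$ in the inductive step is harmless, since the instances $n=0,1$ are exactly the base cases whose negative-exponent terms carry zero coefficient and therefore had to be handled separately to begin with.
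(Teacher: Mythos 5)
Your proposal is correct and follows the same route as the paper, whose proof is simply the instruction ``Use (\ref{equ:2}) and induction on $n$''; you have filled in exactly that induction, with the base cases, the left-multiplication by $x_i$, the substitution for $x_i^3x_{i+2}$ via (\ref{equ:2}), and the verification of the resulting $q$-number identities all checking out.
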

\begin{proof}
Use ({\ref {equ:2}}) and induction on $n$.
\end{proof}

For $i\in \mathbb{Z}_4$ we define an element $\mathfrak{n}_{i,i+1}\in\square_q$. Later we will show that $\mathfrak{n}_{i,i+1}$ is nilpotent on every finite-dimensional $\square_q$-module.
\begin{lem}
For $i\in \mathbb{Z}_4$,
\begin{equation*}
q(1-x_ix_{i+1})=q^{-1}(1-x_{i+1}x_i).
\end{equation*}
\end{lem}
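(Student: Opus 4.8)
The plan is to derive the identity $q(1-x_ix_{i+1}) = q^{-1}(1-x_{i+1}x_i)$ directly from the $q$-Weyl relation \eqref{equ:1}. Expanding the left-hand side, it equals $q - qx_ix_{i+1}$, while the right-hand side equals $q^{-1} - q^{-1}x_{i+1}x_i$. So the claim is equivalent to
\begin{equation*}
q - q^{-1} = qx_ix_{i+1} - q^{-1}x_{i+1}x_i.
\end{equation*}

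First I would take \eqref{equ:1}, namely $\tfrac{qx_ix_{i+1} - q^{-1}x_{i+1}x_i}{q-q^{-1}} = 1$, and multiply both sides by $q - q^{-1}$, which is nonzero since $q$ is not a root of unity. This gives exactly $qx_ix_{i+1} - q^{-1}x_{i+1}x_i = q - q^{-1}$. Rearranging, $q - qx_ix_{i+1} = q^{-1} - q^{-1}x_{i+1}x_i$, which is the desired equation after factoring $q$ from the left side and $q^{-1}$ from the right side.

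There is essentially no obstacle here; the lemma is an immediate restatement of the defining relation \eqref{equ:1} cleared of denominators. The only point worth noting is that the step requires $q - q^{-1} \neq 0$, which holds by the standing assumption that $q$ is not a root of unity (in particular $q \neq \pm 1$), so dividing or multiplying by $q - q^{-1}$ is legitimate.
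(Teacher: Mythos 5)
Your proof is correct and matches the paper's approach: the paper's proof simply states that the equation is a reformulation of (\ref{equ:1}), and your argument spells out exactly that reformulation (clearing the denominator $q-q^{-1}$, which is nonzero since $q$ is not a root of unity, and rearranging).
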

\begin{proof}
This equation is a reformulation of ({\ref {equ:1}}).
\end{proof}

\begin{1def}
\label{def:n}
For $i\in \mathbb{Z}_4$ define
\begin{equation}
\mathfrak{n}_{i,i+1}=\frac{q(1-x_ix_{i+1})}{q-q^{-1}}=\frac{q^{-1}(1-x_{i+1}x_i)}{q-q^{-1}}. \label{equ:3}
\end{equation}
\end{1def}

We now describe some basic properties of $\mathfrak{n}_{i,i+1}$ for later use.
\begin{lem}
For $i\in \mathbb{Z}_4$, the following relations hold in $\square_q$:
\begin{equation}
x_i\mathfrak{n}_{i,i+1}=q^{-2}\mathfrak{n}_{i,i+1}x_i,\qquad \qquad x_{i+1}\mathfrak{n}_{i,i+1}=q^{2}\mathfrak{n}_{i,i+1}x_{i+1}. \label{equ:uu}
\end{equation}
\end{lem}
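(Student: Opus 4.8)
The goal is to prove the two commutation relations
\[
x_i\mathfrak{n}_{i,i+1}=q^{-2}\mathfrak{n}_{i,i+1}x_i,\qquad x_{i+1}\mathfrak{n}_{i,i+1}=q^{2}\mathfrak{n}_{i,i+1}x_{i+1}.
\]
The plan is to reduce both to the $q$-Weyl relation (\ref{equ:1}) after substituting the definition (\ref{equ:3}) of $\mathfrak{n}_{i,i+1}$. Since the scalar $q/(q-q^{-1})$ passes freely through everything, it suffices to show $x_i(1-x_ix_{i+1})=q^{-2}(1-x_ix_{i+1})x_i$ and $x_{i+1}(1-x_ix_{i+1})=q^{2}(1-x_ix_{i+1})x_{i+1}$, or equivalently, using the alternative form $q(1-x_ix_{i+1})=q^{-1}(1-x_{i+1}x_i)$, whichever side is more convenient.

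For the first relation I would expand the left-hand side as $x_i-x_i^2x_{i+1}$. The key move is to rewrite $x_i^2x_{i+1}$ using (\ref{equ:c1}) with $n=2$: this gives $q^2x_i^2x_{i+1}=q^{-2}x_{i+1}x_i^2+(q^2-q^{-2})x_i$, hence $x_i^2 x_{i+1}=q^{-4}x_{i+1}x_i^2+(1-q^{-4})x_i$. Substituting and collecting terms should produce exactly $q^{-2}(x_i-x_ix_{i+1}x_i)=q^{-2}(1-x_ix_{i+1})x_i$, after one more application of (\ref{equ:1}) in the form $x_ix_{i+1}x_i$ versus $x_{i+1}x_i^2$ (that is, multiply (\ref{equ:1}) on the right by $x_i$). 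Alternatively, and perhaps more cleanly, one can argue directly from (\ref{equ:1}): multiplying the relation $qx_ix_{i+1}-q^{-1}x_{i+1}x_i=q-q^{-1}$ on the left by $x_i$ and separately on the right by $x_i$ and comparing the two resulting identities isolates the commutator of $x_i$ with $x_ix_{i+1}$.

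The second relation is handled symmetrically: expand $x_{i+1}(1-x_ix_{i+1})=x_{i+1}-x_{i+1}x_ix_{i+1}$, and use (\ref{equ:c2}) with $n=2$ (or, again, just multiply (\ref{equ:1}) on the left and right by $x_{i+1}$) to rewrite $x_{i+1}x_ix_{i+1}$ and match it against $q^2(1-x_ix_{i+1})x_{i+1}=q^2 x_{i+1}-q^2 x_ix_{i+1}^2$. The role of $x_i$ and $x_{i+1}$ is exchanged and the exponent of $q$ flips sign, which is consistent with the asymmetry already visible in (\ref{equ:1}).

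I do not anticipate a genuine obstacle here: this is a short direct computation, and the only thing to be careful about is the precise power of $q$ and the direction of multiplication when invoking (\ref{equ:1}) — it is easy to get $q^{2}$ and $q^{-2}$ swapped, or to multiply on the wrong side. As a sanity check one can specialize to the equitable presentation of $U_q(\frak{sl}_2)$, where $\mathfrak{n}_{z}=q(1-xy)/(q-q^{-1})$ and the analogous relations $x n_z = q^{-2} n_z x$, $y n_z = q^{2} n_z y$ are known; the bookkeeping for $\square_q$ is identical since only the $q$-Weyl relation between $x_i$ and $x_{i+1}$ is used, and the cubic $q$-Serre relation (\ref{equ:2}) plays no role.
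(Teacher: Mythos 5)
Your proposal is correct and matches the paper's own (one-line) proof: substitute the definition (\ref{equ:3}) of $\mathfrak{n}_{i,i+1}$ and simplify using the $q$-Weyl relation (\ref{equ:1}), multiplied on the appropriate side by $x_i$ or $x_{i+1}$. The detour through (\ref{equ:c1}) with $n=2$ is unnecessary but harmless, and your sign/exponent bookkeeping checks out.
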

\begin{proof}
To verify ({\ref {equ:uu}}), eliminate $\mathfrak{n}_{i,i+1}$ using the first equality in ({\ref {equ:3}}) and simplify the result using ({\ref {equ:1}}).
\end{proof}

\begin{cor}
For $i\in \mathbb{Z}_4$ and $n\in \mathbb{N}$, the following relations hold in $\square_q$:
\begin{equation}
x_i^n\mathfrak{n}_{i,i+1}=q^{-2n}\mathfrak{n}_{i,i+1}x_i^n,\qquad \qquad x_{i+1}^n\mathfrak{n}_{i,i+1}=q^{2n}\mathfrak{n}_{i,i+1}x_{i+1}^n. \label{equ:4}
\end{equation}
\end{cor}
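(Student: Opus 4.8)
The plan is to deduce the corollary directly from the base case in~(\ref{equ:uu}) by induction on $n$. The statement is the obvious $n$-fold iterate of the relations $x_i\mathfrak{n}_{i,i+1}=q^{-2}\mathfrak{n}_{i,i+1}x_i$ and $x_{i+1}\mathfrak{n}_{i,i+1}=q^{2}\mathfrak{n}_{i,i+1}x_{i+1}$, so no new structural input about $\square_q$ is needed beyond~(\ref{equ:uu}).

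First I would treat the case $n=0$, where both asserted identities read $\mathfrak{n}_{i,i+1}=\mathfrak{n}_{i,i+1}$, hence hold trivially; the case $n=1$ is precisely~(\ref{equ:uu}). Then, assuming $x_i^n\mathfrak{n}_{i,i+1}=q^{-2n}\mathfrak{n}_{i,i+1}x_i^n$ for some $n\in\mathbb{N}$, I would multiply both sides on the left by $x_i$ and push the single factor $x_i$ past $\mathfrak{n}_{i,i+1}$ using~(\ref{equ:uu}):
\begin{equation*}
x_i^{n+1}\mathfrak{n}_{i,i+1}=x_i\bigl(q^{-2n}\mathfrak{n}_{i,i+1}x_i^n\bigr)=q^{-2n}(x_i\mathfrak{n}_{i,i+1})x_i^n=q^{-2n}q^{-2}\mathfrak{n}_{i,i+1}x_i^{n+1}=q^{-2(n+1)}\mathfrak{n}_{i,i+1}x_i^{n+1}.
\end{equation*}
This completes the induction for the first relation, and the argument for $x_{i+1}^n\mathfrak{n}_{i,i+1}=q^{2n}\mathfrak{n}_{i,i+1}x_{i+1}^n$ is identical except that the exponent of $q$ produced at each step is $+2$ rather than $-2$.

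There is essentially no obstacle here: the only thing to be careful about is that the scalar $q^{\pm2}$ commutes with everything, so it may be pulled out freely, and that the same single commutation rule is applied at each stage. One could alternatively phrase this as: $\mathfrak{n}_{i,i+1}$ conjugates $x_i$ to $q^{-2}x_i$ (resp.\ $x_{i+1}$ to $q^{2}x_{i+1}$) — strictly, $x_i\mapsto q^{-2}\mathfrak{n}_{i,i+1}x_i\mathfrak{n}_{i,i+1}^{-1}$ once $\mathfrak{n}_{i,i+1}$ is invertible — but since we do not yet know $\mathfrak{n}_{i,i+1}$ is invertible in $\square_q$, the cleanest route is the bare induction above, which only uses multiplication in the algebra.
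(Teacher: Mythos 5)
Your proof is correct and is exactly the argument the paper intends: the paper's proof reads ``By (\ref{equ:uu}) and induction on $n$,'' and your write-up simply supplies the routine details of that induction. No issues.
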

\begin{proof}
By ({\ref {equ:uu}}) and induction on $n$.
\end{proof}

\begin{lem}
For $i\in \mathbb{Z}_4$ and $n\in \mathbb{N}$, the following relations hold in $\square_q$:
\begin{gather}
x_i^nx_{i+1}^n\Big(1-(q^{-2n}-q^{-2n-2})\mathfrak{n}_{i,i+1}\Big)=x_i^{n+1}x_{i+1}^{n+1},\label{equ:3101}\\
\Big(1-(q^{2n+2}-q^{2n})\mathfrak{n}_{i,i+1}\Big)x_{i+1}^nx_i^n=x_{i+1}^{n+1}x_i^{n+1}. \label{equ:3102}
\end{gather}
\end{lem}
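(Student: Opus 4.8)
The plan is to prove \eqref{equ:3101} and \eqref{equ:3102} directly from \eqref{equ:3} and \eqref{equ:4}, treating the two equations in parallel since they are related by a symmetry (swapping $x_i\leftrightarrow x_{i+1}$, which by \eqref{equ:1} interchanges the roles of $q$ and $q^{-1}$ in the appropriate places). First I would observe that from Definition~\ref{def:n} we have $x_ix_{i+1}=1-(q-q^{-1})q^{-1}\mathfrak{n}_{i,i+1}$ and $x_{i+1}x_i=1-(q-q^{-1})q\,\mathfrak{n}_{i,i+1}$. The idea for \eqref{equ:3101} is to start from the right-hand side $x_i^{n+1}x_{i+1}^{n+1}=x_i^n\bigl(x_ix_{i+1}\bigr)x_{i+1}^n$, substitute the expression for $x_ix_{i+1}$ in terms of $\mathfrak{n}_{i,i+1}$, and then push the resulting $\mathfrak{n}_{i,i+1}$ to the right past $x_{i+1}^n$ using the second relation in \eqref{equ:4}, which introduces a factor $q^{2n}$. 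This gives $x_i^nx_{i+1}^n - (q-q^{-1})q^{-1}\cdot q^{2n}x_i^nx_{i+1}^n\mathfrak{n}_{i,i+1}$, and since $(q-q^{-1})q^{-1}q^{2n}=q^{2n}-q^{2n-2}$... wait, one must be careful: I would instead push $\mathfrak{n}_{i,i+1}$ to the right but the coefficient in \eqref{equ:3101} is $q^{-2n}-q^{-2n-2}$, so the cleaner route is to move the $\mathfrak{n}_{i,i+1}$ leftward past $x_i^n$ using the first relation in \eqref{equ:4}, picking up $q^{-2n}$, or equivalently to use the alternate form $x_ix_{i+1}=1-(q^{-2}-q^{-4})\cdots$; I would sort out the bookkeeping so the coefficient matches. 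The analogous computation for \eqref{equ:3102} starts from $x_{i+1}^{n+1}x_i^{n+1}=x_{i+1}^n(x_{i+1}x_i)x_i^n$ and uses the first relation in \eqref{equ:4}.

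An alternative, and probably cleaner, approach is induction on $n$. The base case $n=0$ of \eqref{equ:3101} reads $1-(q^0-q^{-2})\mathfrak{n}_{i,i+1}=x_ix_{i+1}$, which is exactly \eqref{equ:3}. For the inductive step, I would write $x_i^{n+1}x_{i+1}^{n+1}=x_i\bigl(x_i^nx_{i+1}^n\bigr)x_{i+1}$, apply the inductive hypothesis to the middle factor, and then commute $x_i$ and $x_{i+1}$ across $\mathfrak{n}_{i,i+1}$ using \eqref{equ:uu}, together with the Weyl relation \eqref{equ:1} to handle the $x_ix_{i+1}$ and $x_i\mathfrak{n}_{i,i+1}x_{i+1}$ terms. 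One reorganizes the result into the form $x_i^{n+1}x_{i+1}^{n+1}\bigl(1-(q^{-2n-2}-q^{-2n-4})\mathfrak{n}_{i,i+1}\bigr)$ by repeated use of \eqref{equ:3} and \eqref{equ:uu}. The treatment of \eqref{equ:3102} is the mirror image.

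The main obstacle, such as it is, is purely bookkeeping: keeping track of which power of $q$ appears when $\mathfrak{n}_{i,i+1}$ is moved past $x_i^n$ versus $x_{i+1}^n$, and making sure the telescoping of the scalar coefficients $1-(q^{-2n}-q^{-2n-2})\mathfrak{n}_{i,i+1}$ is handled correctly—note in particular that $\mathfrak{n}_{i,i+1}^2$ terms must cancel or be absorbed, which works because $x_ix_{i+1}$ and $\mathfrak{n}_{i,i+1}$ are affinely related, so any polynomial identity in them of the relevant degree is forced. I expect no genuine difficulty beyond this, and I would present the direct (non-inductive) computation for \eqref{equ:3101}, then remark that \eqref{equ:3102} follows by the same argument with $x_i$ and $x_{i+1}$ interchanged and \eqref{equ:1} used in the form $q(1-x_ix_{i+1})=q^{-1}(1-x_{i+1}x_i)$.
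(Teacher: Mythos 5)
Your direct computation is correct and essentially matches the paper's proof, which likewise verifies the identities by eliminating $\mathfrak{n}_{i,i+1}$ via (\ref{equ:3}) and simplifying with a commutation relation (the paper uses (\ref{equ:c2}) where you use (\ref{equ:4}); these are interchangeable here). The only bookkeeping point you left open resolves correctly: (\ref{equ:4}) gives $\mathfrak{n}_{i,i+1}x_{i+1}^n=q^{-2n}x_{i+1}^n\mathfrak{n}_{i,i+1}$, so pushing $\mathfrak{n}_{i,i+1}$ rightward past $x_{i+1}^n$ contributes a factor $q^{-2n}$ (not $q^{2n}$), and combined with $x_ix_{i+1}=1-(1-q^{-2})\mathfrak{n}_{i,i+1}$ this yields exactly the coefficient $q^{-2n}-q^{-2n-2}$ in (\ref{equ:3101}), with (\ref{equ:3102}) obtained mirror-fashion from the second equality in (\ref{equ:3}).
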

\begin{proof}
In order to verify these equations, eliminate $\mathfrak{n}_{i,i+1}$ using the first equality in ({\ref {equ:3}}) and simplify the result using ({\ref {equ:c2}}).
\end{proof}

\section{Some isomorphisms and antiisomorphisms for $\square_q$}
In this section, we introduce some isomorphisms and antiisomorphisms for $\square_q$. By an {\it automorphism} of $\square_q$ we mean an $\mathbb{F}$-algebra isomorphism from $\square_q$ to $\square_q$.
\begin{lem}
\label{lem:m1}
There exists an automorphism $\rho$ of $\square_q$ that sends $x_i\mapsto x_{i+1}$ for $i\in \mathbb{Z}_4$.
\end{lem}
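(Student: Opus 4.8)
The plan is to verify that the assignment $x_i \mapsto x_{i+1}$ respects the defining relations (\ref{equ:1}) and (\ref{equ:2}) of $\square_q$, and hence extends to an algebra endomorphism $\rho$; then to observe that $\rho$ is invertible because applying the same construction four times returns each generator to itself. The key point is that the index set $\mathbb{Z}_4$ is a cyclic group and the relations are stated uniformly over $i \in \mathbb{Z}_4$, so the shift $i \mapsto i+1$ simply permutes the relations among themselves.

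In detail, I would first recall the universal property of $\square_q$: since $\square_q$ is presented by generators $\{x_i\}_{i\in\mathbb{Z}_4}$ and relations (\ref{equ:1}), (\ref{equ:2}), any $\mathbb{F}$-algebra $A$ together with elements $\{a_i\}_{i\in\mathbb{Z}_4}$ of $A$ satisfying the analogues of (\ref{equ:1}) and (\ref{equ:2}) determines a unique algebra homomorphism $\square_q \to A$ sending $x_i \mapsto a_i$. Apply this with $A = \square_q$ and $a_i = x_{i+1}$. For each $i\in\mathbb{Z}_4$, the element $a_i a_{i+1} = x_{i+1} x_{i+2}$ satisfies
\begin{equation*}
\frac{q\,a_i a_{i+1} - q^{-1} a_{i+1} a_i}{q - q^{-1}} = \frac{q\,x_{i+1}x_{i+2} - q^{-1} x_{i+2}x_{i+1}}{q - q^{-1}} = 1,
\end{equation*}
which is just relation (\ref{equ:1}) with $i$ replaced by $i+1$; and similarly $a_i^3 a_{i+2} - [3]_q a_i^2 a_{i+2} a_i + [3]_q a_i a_{i+2} a_i^2 - a_{i+2} a_i^3$ is relation (\ref{equ:2}) with $i$ replaced by $i+1$, hence is $0$. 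So there is an algebra homomorphism $\rho : \square_q \to \square_q$ with $\rho(x_i) = x_{i+1}$ for all $i\in\mathbb{Z}_4$.

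It remains to check that $\rho$ is bijective. Since $\mathbb{Z}_4$ has order $4$, the composite $\rho^4$ fixes every generator $x_i$, and as the $x_i$ generate $\square_q$ we get $\rho^4 = \mathrm{id}$. Therefore $\rho$ is invertible with inverse $\rho^3$, so $\rho$ is an automorphism. There is no real obstacle here; the only thing to be careful about is the bookkeeping that the shift $i\mapsto i+1$ carries the full list of defining relations (both the $q$-Weyl relations and the cubic $q$-Serre relations, for every $i\in\mathbb{Z}_4$) onto the same list, which is immediate from the fact that the relations are indexed by $\mathbb{Z}_4$ and the shift is a bijection of $\mathbb{Z}_4$.
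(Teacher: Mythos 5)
Your proposal is correct and is exactly the argument that the paper's one-line proof (``By Definition \ref{def:box}'') is implicitly invoking: the cyclic shift $i\mapsto i+1$ permutes the defining relations, so the universal property yields an endomorphism, and $\rho^4=1$ gives invertibility. You have simply written out the details the paper leaves to the reader.
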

\begin{proof}
By Definition {\ref {def:box}}.
\end{proof}
\begin{lem}
The map $\rho$ from Lemma {\ref {lem:m1}} sends $\mathfrak{n}_{i,i+1}\mapsto \mathfrak{n}_{i+1,i+2}$ for $i\in \mathbb{Z}_4$.
\end{lem}
\begin{proof}
By the definition of $\rho$ and Definition {\ref {def:n}}.
\end{proof}

We recall the notion of antiisomorphism. Given $\mathbb{F}$-algebras $\mathcal{A},\mathcal{B}$, a map $\gamma : \mathcal{A} \to \mathcal{B}$ is called an {\it antiisomorphism} whenever $\gamma$ is an isomorphism of $\mathbb{F}$-vector spaces and $(ab)^\gamma=b^\gamma a^\gamma$ for all $a,b\in \mathcal{A}$. An antiisomorphism $\gamma : \mathcal{A} \to \mathcal{A}$ is called an {\it antiautomorphism} of $\mathcal{A}$

\begin{lem}
\label{lem:m3}
There exists an antiautomorphism $\phi$ of $\square_q$ that sends
\begin{equation*}
x_0\leftrightarrow x_{1}, \qquad \qquad x_{2}\leftrightarrow x_{3}.
\end{equation*}
Moreover there exists an antiautomorphism $\varphi$ of $\square_q$ that sends
\begin{equation*}
x_1\leftrightarrow x_{2}, \qquad \qquad x_{3}\leftrightarrow x_{0}.
\end{equation*}
\end{lem}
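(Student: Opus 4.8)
The plan is to construct the two antiautomorphisms directly from the defining relations of $\square_q$, exploiting the symmetry of Definition~\ref{def:box} under permutations of the index set $\mathbb{Z}_4$. For $\phi$, consider the permutation $\sigma$ of $\mathbb{Z}_4$ that swaps $0\leftrightarrow 1$ and $2\leftrightarrow 3$; I would first check that $\sigma$ reverses the cyclic order in the sense that $\sigma(i)+1=\sigma(i-1)$ for all $i\in\mathbb{Z}_4$, so that the oriented $q$-Weyl relation (\ref{equ:1}), which is not symmetric in the two indices, is converted into its opposite-order form when we also reverse multiplication. Concretely, I would define $\phi$ on generators by $x_i\mapsto x_{\sigma(i)}$ and extend it as an antihomomorphism, then verify it is well-defined by showing the images of the relators under $\phi$ vanish in $\square_q$.

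The key computation is as follows. Applying an antihomomorphism to (\ref{equ:1}) reverses the product, so we must check that $(qx_{\sigma(i)}x_{\sigma(i+1)}-q^{-1}x_{\sigma(i+1)}x_{\sigma(i)})/(q-q^{-1})=1$ follows from the relations; but after the antihomomorphism the left-hand factor order is reversed, giving $(qx_{\sigma(i+1)}x_{\sigma(i)}-q^{-1}x_{\sigma(i)}x_{\sigma(i+1)})/(q-q^{-1})$, and using $\sigma(i+1)=\sigma(i)-1$ this is exactly the relation (\ref{equ:1}) with index $\sigma(i)-1$ in place of $i$, hence equals $1$. For the cubic $q$-Serre relation (\ref{equ:2}), note that it only involves the two indices $i$ and $i+2$, and the polynomial $t^3s-[3]_qt^2st+[3]_qtst^2-st^3$ has the feature that reversing the order of all products sends it to $st^3-[3]_qtst^2+[3]_qt^2st-t^3s$, which is $-1$ times the original; since $\sigma$ also satisfies $\sigma(i+2)=\sigma(i)+2$ (because $\sigma$ preserves the "distance $2$" relation on $\mathbb{Z}_4$), the image of the relator is $-1$ times the relator with index $\sigma(i)$, hence still $0$. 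This shows $\phi$ is a well-defined antihomomorphism; applying $\phi$ twice gives the identity on generators, so $\phi$ is bijective, hence an antiautomorphism.

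For $\varphi$, I would argue identically with the permutation $\tau$ swapping $1\leftrightarrow 2$ and $3\leftrightarrow 0$; one checks $\tau(i+1)=\tau(i)-1$ and $\tau(i+2)=\tau(i)+2$ for all $i\in\mathbb{Z}_4$, so the same two verifications go through verbatim. The main (and essentially only) obstacle is the bookkeeping with the cyclic indices: one must confirm that the chosen involutions of $\mathbb{Z}_4$ are order-reversing for the "$+1$" adjacency while fixing the "$+2$" pairing, since otherwise either (\ref{equ:1}) or (\ref{equ:2}) would fail to be preserved. Once the index arithmetic is pinned down, well-definedness is a direct substitution and bijectivity is immediate from $\phi^2=\mathrm{id}$, $\varphi^2=\mathrm{id}$ on the generating set together with the fact that the generators generate $\square_q$.
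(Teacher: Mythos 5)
Your proposal is correct and takes essentially the same approach as the paper, whose proof is simply the one-line citation ``By Definition~\ref{def:box}''; you have spelled out the verification that the paper leaves implicit. The index computations ($\sigma(i+1)=\sigma(i)-1$ and $\sigma(i+2)=\sigma(i)+2$, and likewise for $\tau$) and the sign analysis of the reversed $q$-Serre relator all check out.
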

\begin{proof}
By Definition {\ref {def:box}}.
\end{proof}
\begin{lem}
\label{lll}
The map $\phi$ from Lemma {\ref {lem:m3}} sends
\begin{equation*}
\mathfrak{n}_{0,1}\mapsto \mathfrak{n}_{0,1},\qquad \mathfrak{n}_{2,3}\mapsto \mathfrak{n}_{2,3}, \qquad \mathfrak{n}_{1,2}\leftrightarrow \mathfrak{n}_{3,0}.
\end{equation*}
Moreover the map $\varphi$ from Lemma {\ref {lem:m3}} sends
\begin{equation*}
\mathfrak{n}_{1,2}\mapsto \mathfrak{n}_{1,2},\qquad \mathfrak{n}_{3,0}\mapsto \mathfrak{n}_{3,0}, \qquad \mathfrak{n}_{0,1}\leftrightarrow \mathfrak{n}_{2,3}.
\end{equation*}
\end{lem}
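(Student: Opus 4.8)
The plan is to verify the claimed images directly from the definitions, exactly as in the proofs of the analogous lemmas for $\rho$ and $\phi$. Recall that $\mathfrak{n}_{i,i+1}=q(1-x_ix_{i+1})/(q-q^{-1})$ by Definition \ref{def:n}, and that $\phi$, $\varphi$ are $\mathbb{F}$-algebra antiautomorphisms, so for any $a,b\in\square_q$ and $\lambda\in\mathbb{F}$ we have $(ab)^\phi=b^\phi a^\phi$, $(\lambda a + b)^\phi = \lambda a^\phi + b^\phi$, and $1^\phi=1$; likewise for $\varphi$.

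First I would treat $\phi$. Since $\phi$ swaps $x_0\leftrightarrow x_1$, we compute
\begin{equation*}
(\mathfrak{n}_{0,1})^\phi=\frac{q(1-x_0x_1)^\phi}{q-q^{-1}}=\frac{q(1-x_1^\phi x_0^\phi)}{q-q^{-1}}=\frac{q(1-x_0x_1)}{q-q^{-1}}=\mathfrak{n}_{0,1},
\end{equation*}
where the middle step uses that $\phi$ reverses products and fixes $1$. The same computation with $x_2\leftrightarrow x_3$ gives $(\mathfrak{n}_{2,3})^\phi=\mathfrak{n}_{2,3}$. For the remaining pair, $\phi$ sends $x_1\mapsto x_0$ and $x_2\mapsto x_3$, so
\begin{equation*}
(\mathfrak{n}_{1,2})^\phi=\frac{q(1-x_1x_2)^\phi}{q-q^{-1}}=\frac{q(1-x_2^\phi x_1^\phi)}{q-q^{-1}}=\frac{q(1-x_3x_0)}{q-q^{-1}}=\mathfrak{n}_{3,0},
\end{equation*}
using the second equality in (\ref{equ:3}), namely $\mathfrak{n}_{3,0}=q^{-1}(1-x_0x_3)/(q-q^{-1})=q(1-x_3x_0)/(q-q^{-1})$. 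Applying $\phi$ again (or running the symmetric computation) gives $(\mathfrak{n}_{3,0})^\phi=\mathfrak{n}_{1,2}$, establishing the claimed transposition.

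The argument for $\varphi$ is identical in structure, with the roles of the index pairs rotated by one: $\varphi$ swaps $x_1\leftrightarrow x_2$ and $x_3\leftrightarrow x_0$, so the same three computations yield $(\mathfrak{n}_{1,2})^\varphi=\mathfrak{n}_{1,2}$, $(\mathfrak{n}_{3,0})^\varphi=\mathfrak{n}_{3,0}$, and $(\mathfrak{n}_{0,1})^\varphi=\mathfrak{n}_{2,3}$, $(\mathfrak{n}_{2,3})^\varphi=\mathfrak{n}_{0,1}$, where in the last two cases one again uses the second expression for $\mathfrak{n}_{i,i+1}$ in (\ref{equ:3}) to rewrite the product in the correct order. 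There is no real obstacle here; the only point requiring a moment's care is that $\phi$ and $\varphi$ are \emph{anti}automorphisms, so one must keep track of the reversal of factors and invoke the equality $q(1-x_ix_{i+1})=q^{-1}(1-x_{i+1}x_i)$ to land on the standard form of the target element. The statement then follows.
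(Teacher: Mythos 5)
Your proof is correct and is exactly the routine verification the paper compresses into ``By the definitions of $\phi,\varphi$ and Definition \ref{def:n}'': apply the antiautomorphism to $q(1-x_ix_{i+1})/(q-q^{-1})$, reverse the product, and identify the result. One tiny remark: in the cross cases you do not actually need the second equality in (\ref{equ:3}), since e.g. $q(1-x_3x_0)/(q-q^{-1})$ is already the primary expression for $\mathfrak{n}_{3,0}$ (take $i=3$ in the first equality), but this does not affect the validity of the argument.
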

\begin{proof}
By the definitions of $\phi,\varphi$ and Definition {\ref {def:n}}.
\end{proof}

\begin{lem}
\label{lem:d4}
The maps $\rho$ from Lemma {\ref {lem:m1}} and $\phi,\varphi$ from Lemma {\ref {lem:m3}} satisfy the following relations:
\begin{gather}
\rho^4=\phi^2=\varphi^2=(\rho\phi)^2=(\rho\varphi)^2=1,\label{d1}\\
\rho\phi=\varphi\rho, \qquad \qquad \rho\varphi=\phi\rho. \label{d2}
\end{gather}
\end{lem}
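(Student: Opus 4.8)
The plan is to verify all the relations in Lemma~\ref{lem:d4} directly, using the fact that an (anti)automorphism is determined by its action on the generators $\{x_i\}_{i\in\mathbb{Z}_4}$ and checking equality of two such maps by comparing where they send each $x_i$. Since $\rho,\phi,\varphi$ are all bijections on $\square_q$, each of the composite maps appearing in \eqref{d1} and \eqref{d2} is again a bijection, and the claim in each case is that it equals the identity (for \eqref{d1}) or that two composites agree (for \eqref{d2}); by Lemma~\ref{lem:stu} it suffices to check this on the generators $x_i$, $i\in\mathbb{Z}_4$.

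First I would record the permutation actions on the index set $\mathbb{Z}_4$. The map $\rho$ acts as the $4$-cycle $i\mapsto i+1$. The map $\phi$ acts as the permutation swapping $0\leftrightarrow 1$ and $2\leftrightarrow 3$, and $\varphi$ as the permutation swapping $1\leftrightarrow 2$ and $3\leftrightarrow 0$; note these are the two ``reflections'' of the square compatible with the cyclic structure. Then $\rho^4$ is the identity permutation, $\phi^2$ and $\varphi^2$ are the identity (each is an involution), and one checks that $\rho\phi$ sends $i\mapsto \phi(i)+1$, which on $0,1,2,3$ gives $2,1,4,3$, i.e.\ $2,1,0,3$; applying it again returns $0,1,2,3$, so $(\rho\phi)^2$ is the identity permutation, and similarly for $(\rho\varphi)^2$. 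For \eqref{d2}, one verifies that $\rho\phi$ and $\varphi\rho$ induce the same permutation of $\mathbb{Z}_4$ (both send $i\mapsto \phi(i)+1$; here one uses $\varphi(i+1)=\phi(i)+1$, which is the identity $\{1\leftrightarrow 2,\ 3\leftrightarrow 0\}$ shifted), and likewise $\rho\varphi$ and $\phi\rho$ agree.

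The one subtlety is parity: a product of two antiautomorphisms is an automorphism, a product of an automorphism and an antiautomorphism is an antiautomorphism, and so on, so before concluding $T=1$ from ``$T$ fixes every $x_i$'' I must confirm $T$ has even ``antiparity,'' i.e.\ is an honest automorphism, not an antiautomorphism. Here $\rho$ is an automorphism while $\phi,\varphi$ are antiautomorphisms, so $\rho^4$, $\phi^2$, $\varphi^2$, $(\rho\phi)^2$, $(\rho\varphi)^2$ are each a product of an even number of antiautomorphisms, hence automorphisms; an automorphism fixing all generators is the identity by Lemma~\ref{lem:stu}. In \eqref{d2}, $\rho\phi$ and $\varphi\rho$ are both antiautomorphisms, and two antiautomorphisms agreeing on the generators are equal (again by Lemma~\ref{lem:stu}, together with the fact that an antiautomorphism is determined by its values on a generating set). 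So the argument is: (i) track the induced permutation of indices; (ii) track the automorphism/antiautomorphism parity; (iii) invoke that an (anti)automorphism is determined by its action on $\{x_i\}$.

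I expect no real obstacle here; the only thing requiring care is the bookkeeping of the index permutations, in particular checking the commutation relations $\rho\phi=\varphi\rho$ and $\rho\varphi=\phi\rho$ at the level of $\mathbb{Z}_4$-permutations, which amounts to the elementary identities $\varphi\rho(i)=\rho\phi(i)$ and $\phi\rho(i)=\rho\varphi(i)$ for all $i\in\mathbb{Z}_4$. These could, alternatively, be seen more conceptually: $\rho,\phi,\varphi$ realize the symmetries of the square (the dihedral group $D_4$) acting on the four generators, and \eqref{d1}, \eqref{d2} are just the standard Coxeter-type relations in $D_4$; one only needs to additionally observe that the unique nontrivial homomorphism $D_4\to\mathbb{Z}/2\mathbb{Z}$ recording antiparity sends $\rho\mapsto 0$ and $\phi,\varphi\mapsto 1$, which is consistent. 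Thus the proof reduces to a short finite check on $\mathbb{Z}_4$ plus the parity remark, and then an appeal to Lemma~\ref{lem:stu}.
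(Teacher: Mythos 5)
Your proof is correct and is essentially the paper's argument spelled out in full: the paper's proof is the one-line ``By the definitions of $\rho,\phi,\varphi$,'' which amounts to exactly the index-permutation and parity bookkeeping you carry out. One tiny quibble: determining an (anti)automorphism by its values on the generators needs only that the $x_i$ generate $\square_q$ (immediate from Definition~\ref{def:box}), not the linear independence of Lemma~\ref{lem:stu}, which the paper reserves for the injectivity step in Lemma~\ref{lem:dd}.
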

\begin{proof}
By the definitions of $\rho,\phi,\varphi$.
\end{proof}

Recall that the dihedral group $D_4$ has the following group presentation:
\begin{equation*}
D_4=\{x,y\mid x^4=y^2=(xy)^2=1\}.
\end{equation*}
The group $D_4$ has $8$ elements and is the group of symmetries of a square. Consider the group $\rm{AAut}(\square_q)$ consisting of the automorphisms and antiautomorphisms of $\square_q$. The group operation is composition.
\begin{lem}
\label{lem:dd}
Let $G$ denote the subgroup of $\rm{AAut}(\square_q)$ generated by the maps $\rho$ from Lemma {\ref {lem:m1}} and $\phi,\varphi$ from Lemma {\ref {lem:m3}}. Then $G$ is isomorphic to $D_4$.
\end{lem}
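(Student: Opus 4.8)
The plan is to exhibit an explicit surjective group homomorphism $D_4\to G$ and then argue that it is injective because $G$ has exactly $8$ elements. First I would send the standard generators $x\mapsto\rho$ and $y\mapsto\phi$. By Lemma~\ref{lem:d4}, the defining relations $x^4=y^2=(xy)^2=1$ of $D_4$ are satisfied by $\rho,\phi$ (using $\rho^4=\phi^2=(\rho\phi)^2=1$), so the assignment extends to a group homomorphism $\pi\colon D_4\to \mathrm{AAut}(\square_q)$ with image $G$; note that $G$ is indeed generated by $\rho$ and $\phi$ alone, since by \eqref{d2} we have $\varphi=\rho\phi\rho^{-1}$, so $\varphi$ lies in the subgroup generated by $\rho$ and $\phi$. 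Thus $\pi$ is onto $G$, and $|G|=|D_4/\ker\pi|\le 8$.

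The substantive point is the reverse inequality $|G|\ge 8$, equivalently the injectivity of $\pi$. Here I would use Lemma~\ref{lem:stu}: since the generators $\{x_i\}_{i\in\mathbb{Z}_4}$ are linearly independent in $\square_q$, any element of $\mathrm{AAut}(\square_q)$ is determined by its effect on the (ordered) $4$-tuple $(x_0,x_1,x_2,x_3)$, together with the datum of whether it is an automorphism or an antiautomorphism. Now $\rho$ acts on indices by the $4$-cycle $(0\,1\,2\,3)$ as an automorphism, while $\phi$ acts by the index permutation $(0\,1)(2\,3)$ as an antiautomorphism. The eight elements $\rho^a\phi^b$ with $0\le a\le 3$, $b\in\{0,1\}$ realize the eight index permutations in the dihedral subgroup of $S_4$ generated by $(0\,1\,2\,3)$ and $(0\,1)(2\,3)$, and moreover the four with $b=0$ are automorphisms while the four with $b=1$ are antiautomorphisms. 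Since an automorphism and an antiautomorphism of $\square_q$ cannot coincide (they would have to agree on a product $x_ix_{i+1}$, forcing $x_ix_{i+1}=x_{i+1}x_i$, which contradicts \eqref{equ:1} and Lemma~\ref{lem:stu}), and since distinct index permutations give distinct maps on the linearly independent generators, these eight elements $\rho^a\phi^b$ are pairwise distinct. Hence $|G|\ge 8$, so $|G|=8$ and $\pi$ is an isomorphism.

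The main obstacle is making the counting argument airtight rather than routine: one must be careful that $G$ is generated by $\rho,\phi$ (handled via $\varphi=\rho\phi\rho^{-1}$ from \eqref{d2}), that the listed $8$ elements are genuinely distinct (handled by tracking both the induced index permutation and the automorphism-versus-antiautomorphism dichotomy, using Lemma~\ref{lem:stu} and \eqref{equ:1}), and that no further collapsing occurs — but once $|G|\le 8$ from the presentation and $|G|\ge 8$ from exhibiting $8$ distinct elements, we get $|G|=8=|D_4|$ and $\pi$ is forced to be an isomorphism. I expect this to be short: essentially one invokes Lemma~\ref{lem:d4} for the "$\le$" direction and Lemma~\ref{lem:stu} for the "$\ge$" direction.
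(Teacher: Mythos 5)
Your proposal is correct and follows essentially the same route as the paper: a surjection $D_4\to G$ from the relations \eqref{d1}, surjectivity onto $G$ via $\varphi=\rho\phi\rho^{-1}$ from \eqref{d2}, and injectivity from Lemma~\ref{lem:stu}; you merely spell out the injectivity step that the paper leaves terse. Note that your automorphism-versus-antiautomorphism dichotomy (whose parenthetical justification is a bit glossed) is not actually needed, since the eight index permutations induced by $\rho^a\phi^b$ are already pairwise distinct, and distinctness of the maps then follows from Lemma~\ref{lem:stu} alone.
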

\begin{proof}
By (\ref{d1}) there exists a group homomorphism $f: D_4\to G$ that sends $x\mapsto \rho$ and $y\mapsto \phi$. By (\ref{d2}) the element $\varphi$ is in the image of $f$. Therefore $f$ is surjective. By Lemma {\ref{lem:stu}}, the map $f$ is injective. By these comments $f$ is an isomorphism. The result follows.
\end{proof}

We now relate $\square_q$ and $\square_{q^{-1}}$.
\begin{lem}
\label{lem:m2}
There exists an antiisomorphism $\dagger: \square_q \to \square_{q^{-1}}$ that sends $x_i \mapsto x_i$ for $i\in \mathbb{Z}_4$. Moreover $\dagger^2=1$.
\end{lem}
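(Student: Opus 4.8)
The plan is to use the universal property of the presentation in Definition~\ref{def:box}: for any $\mathbb{F}$-algebra $\mathcal{A}$ and elements $\{a_i\}_{i\in\mathbb{Z}_4}$ of $\mathcal{A}$ satisfying the analogues of \eqref{equ:1} and \eqref{equ:2}, there is a unique $\mathbb{F}$-algebra homomorphism $\square_q\to\mathcal{A}$ with $x_i\mapsto a_i$. An antiisomorphism $\square_q\to\square_{q^{-1}}$ is the same as an isomorphism $\square_q\to\square_{q^{-1}}^{\,\mathrm{op}}$ onto the opposite algebra, so I would first apply the universal property with $\mathcal{A}=\square_{q^{-1}}^{\,\mathrm{op}}$ and $a_i=x_i$ the generators of $\square_{q^{-1}}$, then check the relations.

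The key step is the verification of the relations. In $\square_{q^{-1}}^{\,\mathrm{op}}$ the product of $a$ and $b$ is the element $ba$ of $\square_{q^{-1}}$, so relation \eqref{equ:1} for the $a_i$ becomes $qx_{i+1}x_i-q^{-1}x_ix_{i+1}=q-q^{-1}$, computed in $\square_{q^{-1}}$; this is precisely relation \eqref{equ:1} for $\square_{q^{-1}}$, obtained by replacing $q$ with $q^{-1}$ there and multiplying through by $-1$. For \eqref{equ:2}, reversing each monomial sends $x_i^3x_{i+2}\leftrightarrow x_{i+2}x_i^3$ and $x_i^2x_{i+2}x_i\leftrightarrow x_ix_{i+2}x_i^2$, and since $[3]_q=[3]_{q^{-1}}$ the relation for the $a_i$ is, up to an overall sign, relation \eqref{equ:2} for $\square_{q^{-1}}$. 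Hence the universal property applies and yields an algebra antihomomorphism $\dagger:\square_q\to\square_{q^{-1}}$ with $x_i\mapsto x_i$ for $i\in\mathbb{Z}_4$.

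Finally I would run the same construction with $q$ replaced by $q^{-1}$, noting $(q^{-1})^{-1}=q$, to obtain an algebra antihomomorphism $\square_{q^{-1}}\to\square_q$ with $x_i\mapsto x_i$, again denoted $\dagger$. Each composite $\square_q\xrightarrow{\dagger}\square_{q^{-1}}\xrightarrow{\dagger}\square_q$ and $\square_{q^{-1}}\xrightarrow{\dagger}\square_q\xrightarrow{\dagger}\square_{q^{-1}}$ is an algebra homomorphism fixing every generator, hence is the identity because the $x_i$ generate. Therefore $\dagger$ is a bijection, so an antiisomorphism, and $\dagger^2=1$. I do not anticipate any real obstacle here: the content is entirely the bookkeeping of monomial reversal together with the symmetry $[3]_q=[3]_{q^{-1}}$, after which each relation check is a single line.
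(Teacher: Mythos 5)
Your proposal is correct and is exactly the argument the paper intends: its proof of Lemma~\ref{lem:m2} is simply ``By Definition~\ref{def:box},'' which abbreviates your check that reversing monomials and replacing $q$ by $q^{-1}$ preserves the relations (\ref{equ:1}) and (\ref{equ:2}) up to sign, so the universal property of the presentation yields the antihomomorphism. Your final step, that the two composites fix the generators and are therefore the identity, is the standard way to get bijectivity and $\dagger^2=1$, so there is nothing to add.
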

\begin{proof}
By Definition {\ref {def:box}}.
\end{proof}

In Definition {\ref {def:n}} we discussed an element $\mathfrak{n}_{i,i+1}\in \square_q$. We retain the notation $\mathfrak{n}_{i,i+1}$ for the corresponding element in $\square_{q^{-1}}$.
\begin{lem}
The map $\dagger$ from Lemma {\ref{lem:m2}} sends $\mathfrak{n}_{i,i+1}\mapsto -\mathfrak{n}_{i,i+1}$ for $i\in \mathbb{Z}_4$.
\end{lem}
\begin{proof}
By the definition of $\dagger$ and Definition {\ref {def:n}}.
\end{proof}

\section{The element $x_i$ is invertible on finite-dimensional $\square_q$-modules}
Let $V$ denote a nonzero finite-dimensional $\square_q$-module. In this section, we show that for $i\in \mathbb{Z}_4$ the action of $x_i$ on $V$ is invertible.

We first show that the action of $x_i$ on $V$ is not nilpotent.
\begin{lem}
\label{lem:nonil}
Let $V$ denote a nonzero finite-dimensional $\square_q$-module. For $i\in \mathbb{Z}_4$, the action of $x_i$ on $V$ is not nilpotent.
\end{lem}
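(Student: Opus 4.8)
The plan is to argue by contradiction, using the $q$-Weyl relation~(\ref{equ:1}) together with the fact that $q$ is not a root of unity. Suppose that the action of $x_i$ on $V$ is nilpotent for some $i\in\mathbb{Z}_4$; since $\rho$ from Lemma~\ref{lem:m1} is an automorphism cycling the $x_i$, it suffices to treat one index, say $i=0$. Thus assume $x_0^N=0$ on $V$ for some positive integer $N$, and choose $N$ minimal with this property; since $V\neq 0$, we have $N\geq 1$ and $x_0^{N-1}\neq 0$ on $V$. The key input is relation~(\ref{equ:c1}) with $n=N$, which reads $q^N x_0^N x_1 - q^{-N} x_1 x_0^N = (q^N-q^{-N})x_0^{N-1}$ in $\square_q$, hence on $V$. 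Because $x_0^N=0$ on $V$, the left-hand side acts as $0$, so $(q^N-q^{-N})x_0^{N-1}=0$ on $V$.

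Now I would invoke the hypothesis that $q$ is not a root of unity: this forces $q^N\neq q^{-N}$, i.e. $q^N-q^{-N}\neq 0$, and therefore $x_0^{N-1}=0$ on $V$. This contradicts the minimality of $N$ (or, if $N=1$, it directly contradicts $V\neq 0$, since then $x_0^0 = 1$ acts as $0$). Hence no such $N$ exists and $x_0$ acts non-nilpotently on $V$. Applying the automorphism $\rho$ (equivalently, repeating the identical argument with $i$ in place of $0$ and~(\ref{equ:c1}) as stated) yields the claim for all $i\in\mathbb{Z}_4$.

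I do not anticipate a serious obstacle here: the only ingredients are the commutation identity~(\ref{equ:c1}), which was already established by induction, and the standing assumption on $q$. The one point requiring a moment of care is the base case $N=1$ of the descent, where one should note that $x_0^{0}$ is interpreted as the identity operator on $V$, so that the conclusion $x_0^{N-1}=0$ genuinely contradicts $V\neq 0$; this is why the hypothesis $V\neq 0$ appears in the statement. Everything else is immediate.
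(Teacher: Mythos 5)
Your proposal is correct and follows essentially the same route as the paper: assume a minimal $N$ with $x_i^N=0$, apply~(\ref{equ:c1}) to deduce $(q^N-q^{-N})x_i^{N-1}=0$, and use that $q$ is not a root of unity to contradict minimality (the paper disposes of the case $N=1$ directly via~(\ref{equ:1}), which matches your base-case remark). The reduction to $i=0$ via $\rho$ is harmless but unnecessary, since~(\ref{equ:c1}) is stated for all $i\in\mathbb{Z}_4$.
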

\begin{proof}
Assume that $x_i$ is nilpotent on $V$. Then there exists a minimal positive integer $n$ such that $x_i^n=0$ on $V$. By ({\ref {equ:1}}), we have $n\ne 1$. By ({\ref {equ:c1}}) and since $q$ is not a root of unity, we have $x_i^{n-1}=0$ on $V$. This contradicts the minimality of $n$. The result follows.
\end{proof}

We will use the following notation. Let $V$ denote a finite-dimensional vector space over $\mathbb{F}$ and let $A\in {\rm{End}}(V)$. For $\theta\in \mathbb{F}$ define
\begin{equation*}
{V}_A(\theta)=\{v\in V\mid \exists n\in \mathbb{N}, (A-\theta I)^nv=0\}.
\end{equation*}
Observe that $\theta$ is an eigenvalue of $A$ if and only if ${V}_A(\theta)\ne 0$, and in this case ${V}_A(\theta)$ is the corresponding generalized eigenspace. The sum $V=\bigoplus_{\theta\in \mathbb{F}}{V}_A(\theta)$ is direct.

\begin{pro}
\label{pro:fin}
Let $V$ denote a nonzero finite-dimensional $\square_q$-module. For $i\in \mathbb{Z}_4$ the action of $x_i$ on $V$ is invertible.
\end{pro}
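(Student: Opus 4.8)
The plan is to show that $0$ is not an eigenvalue of the action of $x_i$ on $V$; since $V$ is finite-dimensional, this is equivalent to $x_i$ being invertible on $V$. By Lemma~\ref{lem:nonil} the action of $x_i$ is not nilpotent, so $x_i$ has some nonzero eigenvalue on $V$. I would argue by contradiction: suppose $0$ is an eigenvalue of $x_i$ on $V$, so the generalized eigenspace $W={V}_{x_i}(0)$ is nonzero, and let $n$ be the smallest positive integer with $x_i^n W=0$, equivalently with $x_i^n=0$ on $W$; note $n\geq 1$ and I claim we may take $W$ to be a nonzero subspace on which $x_i$ is nilpotent but not zero, i.e. $n\geq 2$, since $x_i\neq 0$ on $V$ (indeed $x_i$ is not even nilpotent).

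The key step is to produce, inside this nilpotent-action situation, a nonzero subspace on which $x_i$ acts as zero while still respecting enough of the algebra structure to reach a contradiction with relation~(\ref{equ:c1}), exactly as in the proof of Lemma~\ref{lem:nonil}. The natural tool is to work not with $W={V}_{x_i}(0)$ itself, which need not be a submodule, but to use the commutation relations~(\ref{equ:4}) to control how $\mathfrak{n}_{i,i+1}$ and the other generators move generalized eigenspaces of $x_i$ around. From $x_i\mathfrak{n}_{i,i+1}=q^{-2}\mathfrak{n}_{i,i+1}x_i$ one sees that $\mathfrak{n}_{i,i+1}$ sends ${V}_{x_i}(\theta)$ into ${V}_{x_i}(q^{-2}\theta)$ (and $x_{i+1}$ behaves similarly with factor $q^2$), so the zero generalized eigenspace ${V}_{x_i}(0)$ is invariant under both $\mathfrak{n}_{i,i+1}$ and, after checking, under $x_{i+1}$ up to shifting—here one must be careful since $x_{i+1}$ relates to $\mathfrak{n}_{i,i+1}$ only via $x_ix_{i+1}=1-(q-q^{-1})q^{-1}\mathfrak{n}_{i,i+1}$. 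The cleaner route is: restrict~(\ref{equ:c1}), namely $q^nx_i^nx_{i+1}-q^{-n}x_{i+1}x_i^n=(q^n-q^{-n})x_i^{n-1}$, to the subspace $W={V}_{x_i}(0)$. On $W$, choose $n$ minimal with $x_i^n=0$ on $W$; since $x_i$ is not nilpotent on $V$ but $W\neq 0$ forces $x_i$ nilpotent on the $x_i$-invariant space $W$, such $n$ exists, and $n\geq 2$ because $x_i^1=0$ on $W$ together with relation~(\ref{equ:1}) applied appropriately would force a contradiction (this is the same argument as Lemma~\ref{lem:nonil}, and it requires $x_{i+1}W\subseteq W$, which follows because $x_{i+1}=x_i^{-1}(1-(q-q^{-1})q^{-1}\mathfrak{n}_{i,i+1})$ makes no sense on $W$—so instead one uses that $W$ is invariant under $\mathfrak{n}_{i,i+1}$ and then~(\ref{equ:c1}) directly, without needing $x_{i+1}W\subseteq W$). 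Applying both sides of~(\ref{equ:c1}) to a vector $w\in W$ with $x_i^{n-1}w\neq 0$ but $x_i^nw=0$ gives, after noting $x_i^n x_{i+1}w$ lies in $W$ and is killed by the nilpotence, that $(q^n-q^{-n})x_i^{n-1}w$ equals $-q^{-n}x_{i+1}x_i^nw=0$, contradicting $x_i^{n-1}w\neq 0$ since $q^n\neq q^{-n}$.

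Thus $0$ is not an eigenvalue of $x_i$ on $V$, so $\det(x_i)\neq 0$ on $V$, hence $x_i$ is invertible on $V$. I would then remark that by the automorphism $\rho$ of Lemma~\ref{lem:m1} it suffices to prove this for a single value of $i$, so one may as well take $i=0$ and streamline notation. The main obstacle is the bookkeeping around whether ${V}_{x_i}(0)$ is stable under the operators appearing in relation~(\ref{equ:c1}); the resolution is that one only needs stability under $x_i$ (automatic) and that $x_i^n x_{i+1}w\in {V}_{x_i}(0)$ for $w\in{V}_{x_i}(0)$, which follows from the eigenspace-shifting behaviour of $x_{i+1}$ under~(\ref{equ:4}) rewritten through~(\ref{equ:1}), together with the fact that $x_i$ acts nilpotently there. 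Once that point is handled, the contradiction is immediate from $q$ not being a root of unity, mirroring Lemma~\ref{lem:nonil}.
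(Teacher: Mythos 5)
Your overall strategy is the same as the paper's: show that $0$ is not an eigenvalue of $x_i$ by considering $W={V}_{x_i}(0)$ and deriving a contradiction from (\ref{equ:c1}) together with the fact that $q$ is not a root of unity. The paper proves that $W$ is invariant under all four generators (using (\ref{equ:c1}), (\ref{equ:c2}), (\ref{equ:c3})), so that $W$ is a $\square_q$-submodule on which $x_i$ is nilpotent, and then invokes Lemma \ref{lem:nonil} as a black box. You instead rerun the argument of Lemma \ref{lem:nonil} inside $W$, which only requires invariance of $W$ under $x_{i+1}$; that is a legitimate economy (you never need (\ref{equ:c3}) or the invariance under $x_{i+2},x_{i+3}$).

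The one step you never actually establish is the one your whole argument rests on: $x_{i+1}W\subseteq W$, equivalently $x_i^{n}x_{i+1}w=0$ for $w\in W$. Your stated justification --- ``the eigenspace-shifting behaviour of $x_{i+1}$ under (\ref{equ:4})'' --- does not work as written: (\ref{equ:4}) describes how $\mathfrak{n}_{i,i+1}$ commutes past powers of $x_i$ and $x_{i+1}$, hence how $\mathfrak{n}_{i,i+1}$ moves the generalized eigenspaces of $x_i$; it says nothing direct about how $x_{i+1}$ moves them, and the identity $x_ix_{i+1}=1-(q-q^{-1})q^{-1}\mathfrak{n}_{i,i+1}$ only tells you that $x_i(x_{i+1}w)\in W$, from which $x_{i+1}w\in W$ needs a further argument (injectivity of $x_i$ on ${V}_{x_i}(\theta)$ for $\theta\ne 0$ and the directness of the generalized eigenspace decomposition). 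The clean fix is the one the paper uses and which you already have in hand: if $x_i^{m}w=0$, then (\ref{equ:c1}) with exponent $m+1$ applied to $w$ gives $q^{m+1}x_i^{m+1}x_{i+1}w=q^{-(m+1)}x_{i+1}x_i^{m+1}w+(q^{m+1}-q^{-(m+1)})x_i^{m}w=0$, whence $x_{i+1}w\in W$. With that inserted, your contradiction goes through: for $n$ minimal with $x_i^n=0$ on $W$ and $w\in W$ with $x_i^{n-1}w\ne 0$, both terms on the left of (\ref{equ:c1}) vanish on $w$ and $(q^n-q^{-n})x_i^{n-1}w=0$ forces $x_i^{n-1}w=0$. (Your side remark that $n\ge 2$ because $x_i$ is not nilpotent on $V$ is a non sequitur --- $x_i$ could a priori vanish on $W$ --- but it is also unnecessary, since the case $n=1$ is handled by the same computation with $x_i^0=1$.)
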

\begin{proof}
To show $x_i$ is invertible on $V$, it suffices to show that $0$ is not an eigenvalue of $x_i$. Consider the subspace $W={V}_{x_i}(0)$. We first show that $W$ is $\square_q$-invariant. By construction, $W$ is $x_i$-invariant. Pick $v\in W$. By the definition of $W$, there exists $m\in\mathbb{N}$ such that $x_i^mv=0$. By (\ref {equ:c1}) with $n-1=m$, we have $x_i^{m+1}x_{i+1}v=0$. Therefore $x_{i+1}v\in W$. By (\ref {equ:c2}) with $n-1=m$, we have $x_i^{m+1}x_{i+3}v=0$. Therefore $x_{i+3}v\in W$. By (\ref {equ:c3}) with $n-2=m$, we have $x_i^{m+2}x_{i+2}v=0$. Therefore $x_{i+2}v\in W$. We have shown that $W$ is invariant under $x_j$ for $j\in \mathbb{Z}_4$. Therefore $W$ is $\square_q$-invariant. By construction $x_i$ is nilpotent on $W$. Therefore $W=0$ in view of Lemma {\ref {lem:nonil}}. The result follows.
\end{proof}

Motivated by Proposition {\ref {pro:fin}}, we make the following definition.

\begin{1def}
\label{def:ginv}
For $i\in \mathbb{Z}_4$, let $x_i^{-1}$ denote the operator that acts on every nonzero finite-dimension $\square_q$-module as the inverse of $x_i$.
\end{1def}

We now give some formulas involving the operators $x_i^{-1}$.
\begin{lem}
\label{lem:inv1}
For $i\in \mathbb{Z}_4$ the following relations hold on every nonzero finite-dimension $\square_q$-module:
\begin{gather}
qx_{i+1}x_i^{-1}-q^{-1}x_i^{-1}x_{i+1}=(q-q^{-1})x_i^{-2},\label{equ:inv11}\\
qx_{i+1}^{-1}x_i-q^{-1}x_ix_{i+1}^{-1}=(q-q^{-1})x_{i+1}^{-2}.\label{equ:inv12}
\end{gather}
\end{lem}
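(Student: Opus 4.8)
The plan is to prove the two identities (\ref{equ:inv11}) and (\ref{equ:inv12}) by starting from the known relation (\ref{equ:1}) on a nonzero finite-dimensional $\square_q$-module, where Proposition~\ref{pro:fin} guarantees that $x_i$ and $x_{i+1}$ are invertible, and then conjugating by inverses. For (\ref{equ:inv11}), I would take the case $n=2$ of (\ref{equ:c1}), namely $q^2x_i^2x_{i+1}-q^{-2}x_{i+1}x_i^2=(q^2-q^{-2})x_i$, and multiply both sides on the left and on the right by $x_i^{-1}$. Since $x_i$ commutes with $x_i^{-1}$, the left side becomes $q^2x_ix_{i+1}x_i^{-1}-q^{-2}x_i^{-1}x_{i+1}x_i$, and the right side becomes $(q^2-q^{-2})x_i^{-1}$. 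Then I would use (\ref{equ:1}) in the form $x_ix_{i+1}=q^{-2}x_{i+1}x_i+(1-q^{-2})\cdot\!$(something); more cleanly, rewrite (\ref{equ:1}) as $qx_ix_{i+1}-q^{-1}x_{i+1}x_i=q-q^{-1}$, solve for $x_ix_{i+1}$ and for $x_{i+1}x_i$ in terms of each other, substitute, and collect terms. Alternatively, and perhaps more transparently, I would multiply (\ref{equ:1}) itself on both sides by $x_i^{-1}$: from $qx_ix_{i+1}-q^{-1}x_{i+1}x_i=q-q^{-1}$ one gets $qx_{i+1}x_i^{-1}-q^{-1}x_i^{-1}x_{i+1}x_ix_i^{-1}=(q-q^{-1})x_i^{-1}$, but the middle term needs care, so I would instead left-multiply by $x_i^{-1}$ and right-multiply by $x_i^{-1}$ to obtain $qx_i^{-1}\cdot x_ix_{i+1}\cdot x_i^{-1}-q^{-1}x_i^{-1}\cdot x_{i+1}x_i\cdot x_i^{-1}=(q-q^{-1})x_i^{-2}$, i.e. $qx_{i+1}x_i^{-1}-q^{-1}x_i^{-1}x_{i+1}=(q-q^{-1})x_i^{-2}$, which is exactly (\ref{equ:inv11}).

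The identity (\ref{equ:inv12}) then follows from (\ref{equ:inv11}) by applying the automorphism $\rho$ from Lemma~\ref{lem:m1} (which sends $x_i\mapsto x_{i+1}$ and hence $x_i^{-1}\mapsto x_{i+1}^{-1}$ on finite-dimensional modules), or directly by the same device applied to the case $n=2$ of (\ref{equ:c2}): take $q^2x_ix_{i+1}^2-q^{-2}x_{i+1}^2x_i=(q^2-q^{-2})x_{i+1}$, or equivalently (\ref{equ:1}) itself, and left- and right-multiply by $x_{i+1}^{-1}$ to obtain $qx_{i+1}^{-1}x_i-q^{-1}x_ix_{i+1}^{-1}=(q-q^{-1})x_{i+1}^{-2}$. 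Using $\rho$ is the cleanest route since it avoids repeating the computation.

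I do not anticipate a genuine obstacle here; the only point requiring a word of justification is that all the relevant operators are invertible on the module in question, which is supplied by Proposition~\ref{pro:fin}, and that $x_i^{-1}$ commutes with powers of $x_i$, which is immediate. The mild bookkeeping issue is making sure the conjugation is performed on the correct side so that the cross terms $x_ix_{i+1}$ and $x_{i+1}x_i$ in (\ref{equ:1}) collapse to bare $x_{i+1}$ factors flanked by a single $x_i^{-1}$; sandwiching (\ref{equ:1}) between two copies of $x_i^{-1}$ (one on each side) achieves this directly, with the right-hand side $q-q^{-1}$ becoming $(q-q^{-1})x_i^{-2}$.

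\begin{proof}
Let $V$ denote a nonzero finite-dimensional $\square_q$-module. By Proposition~\ref{pro:fin} the actions of $x_i$ and $x_{i+1}$ on $V$ are invertible, so $x_i^{-1}$ and $x_{i+1}^{-1}$ act on $V$, and each commutes with every power of $x_i$, resp.\ $x_{i+1}$. To obtain (\ref{equ:inv11}), multiply (\ref{equ:1}) on the left by $x_i^{-1}$ and on the right by $x_i^{-1}$. Using $x_i^{-1}(x_ix_{i+1})x_i^{-1}=x_{i+1}x_i^{-1}$ and $x_i^{-1}(x_{i+1}x_i)x_i^{-1}=x_i^{-1}x_{i+1}$, we find
\begin{equation*}
qx_{i+1}x_i^{-1}-q^{-1}x_i^{-1}x_{i+1}=(q-q^{-1})x_i^{-2},
\end{equation*}
which is (\ref{equ:inv11}). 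Applying the automorphism $\rho$ from Lemma~\ref{lem:m1}, which sends $x_i\mapsto x_{i+1}$ and hence $x_i^{-1}\mapsto x_{i+1}^{-1}$ on finite-dimensional modules, the relation (\ref{equ:inv11}) is sent to
\begin{equation*}
qx_{i+2}x_{i+1}^{-1}-q^{-1}x_{i+1}^{-1}x_{i+2}=(q-q^{-1})x_{i+1}^{-2}.
\end{equation*}
Replacing $i$ by $i-1$ and using Definition~\ref{def:ginv}, this is exactly (\ref{equ:inv12}). Alternatively, (\ref{equ:inv12}) follows by multiplying (\ref{equ:1}) on the left and right by $x_{i+1}^{-1}$. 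The result follows.
\end{proof}
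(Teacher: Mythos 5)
Your main computation is correct and is essentially the paper's own (one-line) proof: sandwich the relation (\ref{equ:1}) between two copies of $x_i^{-1}$ (resp.\ $x_{i+1}^{-1}$), which is exactly what ``by (\ref{equ:1}) and Definition~\ref{def:ginv}'' amounts to. However, the route you feature for (\ref{equ:inv12}) in the final writeup is wrong. Applying the automorphism $\rho$ to (\ref{equ:inv11}) yields $qx_{i+2}x_{i+1}^{-1}-q^{-1}x_{i+1}^{-1}x_{i+2}=(q-q^{-1})x_{i+1}^{-2}$, and replacing $i$ by $i-1$ in this just reproduces (\ref{equ:inv11}); it is not (\ref{equ:inv12}). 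The two identities differ in more than the index: (\ref{equ:inv12}) pairs $x_i$ with $x_{i+1}^{-1}$ and has the inverse on the opposite side of each product, so an algebra automorphism that merely shifts indices can never carry one to the other. The symmetry that does work is an antiautomorphism interchanging $x_i$ and $x_{i+1}$ (e.g.\ $\phi$ for $i=0$, $\varphi$ for $i=1$, as in Lemma~\ref{lem:m3}), since reversing products swaps the roles of the left and right factors. Fortunately your parenthetical alternative --- multiplying (\ref{equ:1}) on both sides by $x_{i+1}^{-1}$ --- is correct and should be promoted to the actual argument; with that substitution the proof is complete and matches the paper.
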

\begin{proof}
The equation (\ref {equ:inv11}) (resp. (\ref {equ:inv12})) follows by Definition {\ref {def:ginv}} and applying $x_i^{-1}$ (resp. $x_{i+1}^{-1}$) to both sides of (\ref {equ:1}).
\end{proof}

\begin{lem}
\label{lem:inv2}
For $i\in \mathbb{Z}_4$ the following relations hold on every nonzero finite-dimension $\square_q$-module:
\begin{align}
\frac{qx_{i}^{-2}x_{i+1}^{-1}+q^{-1}x_{i+1}^{-1}x_i^{-2}}{q+q^{-1}}=x_{i+1}^{-1}x_i^{-1}x_{i+1}x_i^{-1}x_{i+1}^{-1}, \label{equ:inv21}\\
\frac{qx_{i}^{-1}x_{i+1}^{-2}+q^{-1}x_{i+1}^{-2}x_i^{-1}}{q+q^{-1}}=x_{i}^{-1}x_{i+1}^{-1}x_{i}x_{i+1}^{-1}x_{i}^{-1}. \label{equ:inv22}
\end{align}
\end{lem}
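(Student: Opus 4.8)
The plan is to reduce each identity to an immediate consequence of the $q$-Weyl relation (\ref{equ:1}) by clearing all the inverses. Since $x_i$ and $x_{i+1}$ act invertibly on $V$ (Proposition \ref{pro:fin}, Definition \ref{def:ginv}), multiplying an operator identity on the left and right by any monomial in $x_i^{\pm1}, x_{i+1}^{\pm1}$ is a reversible operation; hence it suffices to verify the transformed identity. For (\ref{equ:inv21}) I would left-multiply both sides by $x_ix_{i+1}$ and right-multiply by $x_{i+1}x_i$. On the right-hand side this telescopes: $x_ix_{i+1}\cdot x_{i+1}^{-1}x_i^{-1}x_{i+1}x_i^{-1}x_{i+1}^{-1}\cdot x_{i+1}x_i = x_{i+1}$. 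On the left-hand side the two numerator terms collapse to $x_ix_{i+1}x_i^{-1}$ and $x_i^{-1}x_{i+1}x_i$, so the whole statement becomes
\begin{equation*}
qx_ix_{i+1}x_i^{-1}+q^{-1}x_i^{-1}x_{i+1}x_i=(q+q^{-1})x_{i+1}.
\end{equation*}

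This reduced identity follows directly from (\ref{equ:1}). Writing the $q$-Weyl relation as $qx_ix_{i+1}-q^{-1}x_{i+1}x_i=q-q^{-1}$ and multiplying it on the right by $x_i^{-1}$ gives $qx_ix_{i+1}x_i^{-1}=q^{-1}x_{i+1}+(q-q^{-1})x_i^{-1}$, while multiplying it on the left by $x_i^{-1}$ gives $q^{-1}x_i^{-1}x_{i+1}x_i=qx_{i+1}-(q-q^{-1})x_i^{-1}$. Adding these two equations, the terms $(q-q^{-1})x_i^{-1}$ cancel and the sum is $(q+q^{-1})x_{i+1}$, as required.

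The second identity (\ref{equ:inv22}) is handled the same way, now left-multiplying by $x_{i+1}x_i$ and right-multiplying by $x_ix_{i+1}$; the right-hand side telescopes to $x_i$ and the statement reduces to $qx_{i+1}^{-1}x_ix_{i+1}+q^{-1}x_{i+1}x_ix_{i+1}^{-1}=(q+q^{-1})x_i$, which again follows from (\ref{equ:1}) by multiplying on the left, respectively right, by $x_{i+1}^{-1}$ and adding. (Alternatively, (\ref{equ:inv22}) can be obtained from (\ref{equ:inv21}) by applying the antiautomorphism $\phi$ for $i\in\{0,2\}$ and $\varphi$ for $i\in\{1,3\}$, since the right-hand sides are palindromes and these maps interchange $x_i$ and $x_{i+1}$; I would only invoke this if the action of those antiautomorphisms on the inverse operators has been set up, and otherwise prefer the direct reduction.) The only real care needed is bookkeeping: choosing the correct clearing monomials so that every inverse cancels and tracking which one-sided multiple of (\ref{equ:1}) produces each of the two conjugated terms. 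There is no conceptual obstacle, since once the inverses are removed the content is entirely contained in the $q$-Weyl relation.
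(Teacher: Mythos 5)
Your proof is correct, but it is organized differently from the paper's. The paper never clears the inverses: it starts from the already-inverted relation (\ref{equ:inv11}) of Lemma \ref{lem:inv1}, sandwiches it in two different ways (on the left by $x_{i+1}^{-1}$, on the right by $x_i^{-1}x_{i+1}^{-1}$, and then on the left by $x_{i+1}^{-1}x_i^{-1}$, on the right by $x_{i+1}^{-1}$) to obtain two identities whose right-hand sides are both $(q-q^{-1})x_{i+1}^{-1}x_i^{-3}x_{i+1}^{-1}$, subtracts them to eliminate that common term, and solves for the word $x_{i+1}^{-1}x_i^{-1}x_{i+1}x_i^{-1}x_{i+1}^{-1}$; the companion identity (\ref{equ:inv22}) is then obtained by applying the antiautomorphism $\phi$ of Lemma \ref{lem:m3}. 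You instead run the reduction backwards: conjugating the target identity by positive monomials (legitimate, since by Proposition \ref{pro:fin} these act invertibly, so the transformed identity is equivalent) collapses everything to
\begin{equation*}
qx_ix_{i+1}x_i^{-1}+q^{-1}x_i^{-1}x_{i+1}x_i=(q+q^{-1})x_{i+1},
\end{equation*}
which you verify straight from (\ref{equ:1}). Note that the cancellation mechanism is the same in both arguments --- one combines a left-multiplied and a right-multiplied copy of the Weyl relation so that the inhomogeneous $(q-q^{-1})$-terms cancel; in the paper this happens in the subtraction of (\ref{equ:46}) from (\ref{equ:47}), in yours when you add the two one-sided multiples. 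What your route buys: it needs only (\ref{equ:1}) and invertibility, bypassing Lemma \ref{lem:inv1} entirely, and by verifying (\ref{equ:inv22}) directly it avoids applying an antiautomorphism of $\square_q$ to an operator identity on modules (a device the paper uses throughout but which, as you note, deserves its own justification). What the paper's route buys: it conforms to the paper's systematic use of the $D_4$ symmetry to halve the casework, a pattern reused in all the later theorems. One small remark on your alternative: there is no need to split into $\phi$ for $i\in\{0,2\}$ and $\varphi$ for $i\in\{1,3\}$; applying $\phi$ alone to the family (\ref{equ:inv21}) for all $i\in\mathbb{Z}_4$ already produces the family (\ref{equ:inv22}) for all $i$, just with the indices permuted.
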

\begin{proof}
We first show (\ref{equ:inv21}). In (\ref {equ:inv11}) multiply each term on the left by $x_{i+1}^{-1}$ and on the right by $x_{i}^{-1}x_{i+1}^{-1}$ to get
\begin{equation}
\label{equ:46}
qx_{i}^{-2}x_{i+1}^{-1}-q^{-1}x_{i+1}^{-1}x_i^{-1}x_{i+1}x_i^{-1}x_{i+1}^{-1}=(q-q^{-1})x_{i+1}^{-1}x_i^{-3}x_{i+1}^{-1}.
\end{equation}
Similarly in (\ref {equ:inv11}), multiply each term on the left by $x_{i+1}^{-1}x_i^{-1}$ and on the right by $x_{i+1}^{-1}$ to get
\begin{equation}
\label{equ:47}
qx_{i+1}^{-1}x_i^{-1}x_{i+1}x_i^{-1}x_{i+1}^{-1}-q^{-1}x_{i+1}^{-1}x_i^{-2}=(q-q^{-1})x_{i+1}^{-1}x_i^{-3}x_{i+1}^{-1}.
\end{equation}
Subtract (\ref {equ:46}) from (\ref {equ:47}) and solve for $x_{i+1}^{-1}x_i^{-1}x_{i+1}x_i^{-1}x_{i+1}^{-1}$ to get (\ref {equ:inv21}). To get (\ref{equ:inv22}), apply the map $\phi$ from Lemma {\ref {lem:m3}} to each side of (\ref {equ:inv21}).
\end{proof}

\section{The element $\mathfrak{n}_{i,i+1}$ is nilpotent on finite-dimensional $\square_q$-modules}
Let $V$ denote a finite-dimensional $\square_q$-module. In this section, we show that for $i\in \mathbb{Z}_4$ the action of $\mathfrak{n}_{i,i+1}$ on $V$ is nilpotent.

\begin{lem}
\label{lem:chain}
Let $V$ denote a finite-dimensional $\square_q$-module and let $\theta\in \mathbb{F}$. Then for $i\in \mathbb{Z}_4$, we have $\mathfrak{n}_{i,i+1}{V}_{x_i}(\theta)\subseteq {V}_{x_i}(q^{-2}\theta)$.
\end{lem}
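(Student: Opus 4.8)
The plan is to exploit the commutation relation \eqref{equ:uu}, which says $x_i\mathfrak{n}_{i,i+1}=q^{-2}\mathfrak{n}_{i,i+1}x_i$, equivalently $\mathfrak{n}_{i,i+1}x_i = q^2 x_i \mathfrak{n}_{i,i+1}$. The point is that $\mathfrak{n}_{i,i+1}$ intertwines the action of $x_i$ with that of $q^{-2}x_i$, so it should carry a generalized eigenspace of $x_i$ for eigenvalue $\theta$ to the generalized eigenspace for eigenvalue $q^{-2}\theta$. Concretely, I would take $v\in V_{x_i}(\theta)$, so $(x_i-\theta I)^n v = 0$ for some $n\in\mathbb{N}$, and show $(x_i - q^{-2}\theta I)^n \mathfrak{n}_{i,i+1} v = 0$.

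The key computational step is to establish the operator identity $(x_i-q^{-2}\theta I)^n \mathfrak{n}_{i,i+1} = q^{2n}\,\mathfrak{n}_{i,i+1}(x_i-\theta I)^n$ acting on $V$. First I would check the base case $n=1$: using $\mathfrak{n}_{i,i+1}x_i = q^2 x_i\mathfrak{n}_{i,i+1}$ from \eqref{equ:uu}, we get $(x_i - q^{-2}\theta I)\mathfrak{n}_{i,i+1} = x_i\mathfrak{n}_{i,i+1} - q^{-2}\theta\,\mathfrak{n}_{i,i+1} = q^{-2}\mathfrak{n}_{i,i+1}x_i - q^{-2}\theta\,\mathfrak{n}_{i,i+1} = q^{-2}\mathfrak{n}_{i,i+1}(x_i - \theta I)$; so in fact the clean identity is $(x_i - q^{-2}\theta I)\mathfrak{n}_{i,i+1} = q^{-2}\mathfrak{n}_{i,i+1}(x_i-\theta I)$. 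Iterating $n$ times (a trivial induction, since each application just peels off one factor and conjugates it past $\mathfrak{n}_{i,i+1}$) yields $(x_i - q^{-2}\theta I)^n \mathfrak{n}_{i,i+1} = q^{-2n}\mathfrak{n}_{i,i+1}(x_i-\theta I)^n$. Alternatively one can invoke the already-proved Corollary \eqref{equ:4}, $x_i^n\mathfrak{n}_{i,i+1} = q^{-2n}\mathfrak{n}_{i,i+1}x_i^n$, expand $(x_i - q^{-2}\theta I)^n$ by the binomial theorem (the two operators $x_i$ and $q^{-2}\theta I$ commute), push each power $x_i^k$ past $\mathfrak{n}_{i,i+1}$ using \eqref{equ:4}, and reassemble; but the direct induction above is cleaner.

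Once this identity is in hand, the conclusion is immediate: if $(x_i-\theta I)^n v = 0$ then $(x_i - q^{-2}\theta I)^n \mathfrak{n}_{i,i+1} v = q^{-2n}\mathfrak{n}_{i,i+1}(x_i-\theta I)^n v = 0$, so $\mathfrak{n}_{i,i+1}v \in V_{x_i}(q^{-2}\theta)$ by the definition of the generalized eigenspace. Since $v\in V_{x_i}(\theta)$ was arbitrary, $\mathfrak{n}_{i,i+1}V_{x_i}(\theta)\subseteq V_{x_i}(q^{-2}\theta)$, as desired.

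I do not anticipate a genuine obstacle here: the statement is essentially a formal consequence of the $q$-commutation relation \eqref{equ:uu} (or its iterate \eqref{equ:4}) together with the definition of a generalized eigenspace, and there is no subtlety about whether $q^{-2}\theta$ is an actual eigenvalue — if it is not, then $V_{x_i}(q^{-2}\theta)=0$ and the claim just says $\mathfrak{n}_{i,i+1}$ kills $V_{x_i}(\theta)$, which is automatically consistent. The only mild care needed is the bookkeeping of exponents of $q$ in the induction, and noting that $\theta$ and $q^{-2}\theta$ being possibly $0$ causes no problem (indeed by Proposition \ref{pro:fin} the eigenvalue $0$ never occurs, though we do not even need that here).
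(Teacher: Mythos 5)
Your proposal is correct and follows essentially the same route as the paper: the paper also takes $v$ with $(x_i-\theta I)^n v=0$ and uses the relation $x_i^n\mathfrak{n}_{i,i+1}=q^{-2n}\mathfrak{n}_{i,i+1}x_i^n$ from (\ref{equ:4}) to conclude $(x_i-q^{-2}\theta I)^n\mathfrak{n}_{i,i+1}v=0$. Your one-step identity $(x_i-q^{-2}\theta I)\mathfrak{n}_{i,i+1}=q^{-2}\mathfrak{n}_{i,i+1}(x_i-\theta I)$, iterated, just makes explicit the computation the paper leaves implicit.
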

\begin{proof}
Pick $v\in {V}_{x_i}(\theta)$. We show $\mathfrak{n}_{i,i+1}v\in {V}_{x_i}(q^{-2}\theta)$. By the definition of ${V}_{x_i}(\theta)$, there exists $n\in \mathbb{N}$ such that $(x_i-\theta I)^nv=0$. By this and the left equation in (\ref {equ:4}), we have $(x_i-q^{-2}\theta I)^n\mathfrak{n}_{i,i+1}v=0$. Therefore $\mathfrak{n}_{i,i+1}v\in {V}_{x_i}(q^{-2}\theta)$. The result follows.
\end{proof}

\begin{pro}
\label{pro:nnil}
Let $V$ denote a finite-dimensional $\square_q$-module. For $i\in \mathbb{Z}_4$ the action of $\mathfrak{n}_{i,i+1}$ on $V$ is nilpotent.
\end{pro}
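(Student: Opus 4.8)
The plan is to exploit the eigenvalue-shifting property recorded in Lemma~\ref{lem:chain}. We may assume $V \ne 0$, since the zero module is trivial. Because $\mathbb{F}$ is algebraically closed and $V$ is finite-dimensional, the generalized eigenspace decomposition $V = \sum_{\theta \in \mathbb{F}} V_{x_i}(\theta)$ holds and is direct, with only finitely many summands nonzero. By Proposition~\ref{pro:fin} the action of $x_i$ on $V$ is invertible, so $0$ is not an eigenvalue of $x_i$; hence every $\theta$ with $V_{x_i}(\theta) \ne 0$ is nonzero.

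Next I would fix such a $\theta$ and examine the scalars $\theta, q^{-2}\theta, q^{-4}\theta, \dots$. Since $q$ is not a root of unity, these are mutually distinct: if $q^{-2a}\theta = q^{-2b}\theta$ with $\theta \ne 0$ and $a \ne b$, then $q^{2(a-b)} = 1$, a contradiction. As the set of eigenvalues of $x_i$ on $V$ is finite, there is some $N_\theta \in \mathbb{N}$ with $V_{x_i}(q^{-2n}\theta) = 0$ for all $n \ge N_\theta$. Iterating Lemma~\ref{lem:chain} yields $\mathfrak{n}_{i,i+1}^{\,n} V_{x_i}(\theta) \subseteq V_{x_i}(q^{-2n}\theta)$ for every $n \in \mathbb{N}$, so $\mathfrak{n}_{i,i+1}^{\,N_\theta}$ vanishes on $V_{x_i}(\theta)$.

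Finally, let $N$ be the maximum of $N_\theta$ over the finitely many eigenvalues $\theta$ of $x_i$ on $V$. Then $\mathfrak{n}_{i,i+1}^{\,N}$ vanishes on every generalized eigenspace $V_{x_i}(\theta)$, hence on their direct sum $V$, so $\mathfrak{n}_{i,i+1}$ acts nilpotently on $V$.

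There is no substantial obstacle once Lemma~\ref{lem:chain} is available: the essential point is that multiplication by $q^{-2}$ generates an infinite orbit on $\mathbb{F}^{\times}$ (because $q$ is not a root of unity), so repeated application of $\mathfrak{n}_{i,i+1}$ must eventually move any generalized eigenvector of $x_i$ out of the finite spectrum of $x_i$. The only minor care needed is to extract a single exponent $N$ that works uniformly across all the generalized eigenspaces.
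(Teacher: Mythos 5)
Your proposal is correct and follows essentially the same route as the paper: both rest on Proposition~\ref{pro:fin} to rule out the eigenvalue $0$, then iterate Lemma~\ref{lem:chain} to push each generalized eigenspace $V_{x_i}(\theta)$ along the orbit $\theta, q^{-2}\theta, q^{-4}\theta,\dots$, which must exit the finite spectrum of $x_i$ because $q$ is not a root of unity. The only cosmetic difference is that you make explicit the final step of taking a uniform exponent $N$ over the finitely many generalized eigenspaces, which the paper leaves implicit.
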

\begin{proof}
Assume that $V$ is nonzero; otherwise the result is trivial. It suffices to show that for each eigenvalue $\theta$ of $x_i$, there exists a positive integer $m$ such that $\mathfrak{n}_{i,i+1}^m{V}_{x_i}(\theta)=0$. By Proposition {\ref {pro:fin}} the scalar $0$ is not an eigenvalue of $x_i$. Therefore $\theta\ne 0$. Since $V$ has finite positive dimension and $q$ is not a root of unity, there exists a positive integer $m$ such that $\theta q^{-2j}$ is an eigenvalue of $x_i$ for $0\le j\le m-1$, but $\theta q^{-2m}$ is not an eigenvalue of $x_i$. By this and Lemma {\ref {lem:chain}}, we have $\mathfrak{n}_{i,i+1}^m{V}_{x_i}(\theta)\subseteq {V}_{x_i}(\theta q^{-2m})=0$. Therefore $\mathfrak{n}_{i,i+1}^m{V}_{x_i}(\theta)=0$. The result follows.
\end{proof}

\section{The $q$-exponential function}
In this section we obtain some results involving the $q$-exponential function.
\begin{1def}
\cite[p.~204]{t1}
\label{def:qexp}
Let $V$ denote a vector space over $\mathbb{F}$ with finite positive dimension. Let $\psi\in {\rm{End}}(V)$ be nilpotent. Define
\begin{equation}
\label{equ:qexp}
{\rm {exp}}_q (\psi)=\sum_{n\in\mathbb{N}}\frac{q^{n \choose 2 }}{[n]_q^!}\psi^n.
\end{equation}
\end{1def}
The following result is well known and readily verified.
\begin{lem}
\rm
\cite[\rm p.~204]{t1}
\label{lem:inv}
Referring to Definition {\ref {def:qexp}}, the map ${\rm {exp}}_q(\psi)$ is invertible and its inverse is
\begin{equation*}
{\rm {exp}}_{q^{-1}}(-\psi)=\sum_{n\in\mathbb{N}}\frac{(-1)^nq^{- {n \choose 2 }}}{[n]_q^!}\psi^n.
\end{equation*}
\end{lem}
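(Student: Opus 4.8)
The plan is to verify directly that the product of the two maps, taken in either order, is the identity on $V$. First I would record the structural point that makes everything finite: since $\psi$ is nilpotent, say $\psi^s=0$, the series defining ${\rm exp}_q(\psi)$ and the proposed inverse ${\rm exp}_{q^{-1}}(-\psi)$ are actually finite sums, so both are genuine elements of ${\rm End}(V)$. Moreover each is a polynomial in $\psi$, so the two maps commute. Hence it is enough to prove the single equation
\[
{\rm exp}_q(\psi)\,{\rm exp}_{q^{-1}}(-\psi)=I,
\]
because on a finite-dimensional space a one-sided inverse is two-sided, and here commutativity makes the reversed product equal in any case.

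Next I would expand, using $[m]_{q^{-1}}^!=[m]_q^!$ to write ${\rm exp}_{q^{-1}}(-\psi)=\sum_{m\in\mathbb{N}}(-1)^m q^{-\binom{m}{2}}\psi^m/[m]_q^!$, multiply the two sums, and collect the coefficient of each $\psi^N$. The constant term ($N=0$) is $I$, so the whole claim reduces to the scalar identity that for every $N\ge 1$,
\[
\sum_{m=0}^{N}\frac{(-1)^m\,q^{\binom{N-m}{2}-\binom{m}{2}}}{[m]_q^!\,[N-m]_q^!}=0.
\]
Multiplying by $[N]_q^!$ and using $\binom{N-m}{2}-\binom{m}{2}=\binom{N}{2}-m(N-1)$, this is equivalent to
\[
\sum_{m=0}^{N}(-1)^m\,q^{-m(N-1)}\binom{N}{m}_q=0 \qquad (N\ge 1),
\]
where $\binom{N}{m}_q=[N]_q^!/([m]_q^![N-m]_q^!)$ is the Gaussian binomial coefficient.

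The heart of the proof, and the step I expect to be the main obstacle, is this alternating $q$-binomial identity; the rest is bookkeeping. I would establish it by passing to ordinary Gaussian binomials in the base $Q=q^2$. A short computation gives $\binom{N}{m}_q=q^{-m(N-m)}\binom{N}{m}_Q$, and combining the two $q$-power factors shows $q^{-m(N-1)}\binom{N}{m}_q=\binom{N}{m}_Q\,Q^{\binom{m}{2}}\,(Q^{1-N})^m$. The sum then matches the finite $q$-binomial theorem $\sum_{m=0}^{N}\binom{N}{m}_Q(-1)^m Q^{\binom{m}{2}}t^m=\prod_{k=0}^{N-1}(1-Q^k t)$ evaluated at $t=Q^{1-N}$, whose factor with $k=N-1$ is $1-Q^0=0$; hence the product, and with it the sum, vanishes for all $N\ge 1$. (Alternatively one can prove the identity by induction on $N$ using the balanced $q$-Pascal relation $\binom{N}{m}_q=q^{-m}\binom{N-1}{m}_q+q^{N-m}\binom{N-1}{m-1}_q$.) Once the scalar identity is in hand, the displayed operator equation follows, and invertibility with the stated inverse is immediate.
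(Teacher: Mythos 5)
Your proposal is correct. Note, however, that the paper itself offers no argument for this lemma: it simply labels it ``well known and readily verified'' and cites \cite[p.~204]{t1}, so there is no internal proof to compare against. Your write-up is exactly the verification the paper leaves to the reader, and its key steps check out: the nilpotency of $\psi$ makes both series polynomials in $\psi$, hence commuting elements of ${\rm End}(V)$; the identity $[m]_{q^{-1}}^!=[m]_q^!$ justifies the rewriting of ${\rm exp}_{q^{-1}}(-\psi)$; the exponent computation $\binom{N-m}{2}-\binom{m}{2}=\binom{N}{2}-m(N-1)$ is right; the base change $\binom{N}{m}_q=q^{-m(N-m)}\binom{N}{m}_Q$ with $Q=q^2$ is right, and combining exponents indeed gives $q^{-m(N-1)}\binom{N}{m}_q=\binom{N}{m}_Q\,Q^{\binom{m}{2}}(Q^{1-N})^m$; finally the finite $q$-binomial theorem evaluated at $t=Q^{1-N}$ vanishes because of the factor $1-Q^{N-1}t=0$. (The hypothesis that $q$ is not a root of unity guarantees all $[n]_q^!\ne 0$, so every step is legitimate.) One could shorten the scalar identity by instead invoking the standard Euler-type identity $\sum_{m=0}^{N}(-1)^m q^{\binom{N-m}{2}-\binom{m}{2}}\binom{N}{m}_q=\delta_{N,0}$ directly or by the inductive route you mention parenthetically, but your reduction to the Gaussian binomial theorem in base $Q=q^2$ is a clean and complete way to make good on the paper's ``readily verified.''
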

We mention an identity for later use.

\begin{lem}
Referring to Definition {\ref {def:qexp}},
\begin{equation}
{\rm {exp}}_q(q^2\psi)(1-(q^2-1)\psi)={\rm {exp}}_q(\psi). \label{equ:q1}
\end{equation}
\end{lem}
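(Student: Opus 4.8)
The plan is to prove the identity \eqref{equ:q1} by comparing coefficients of $\psi^n$ on both sides. Since $\psi$ is nilpotent, both sides are finite sums, so this is legitimate. Expanding the left-hand side using \eqref{equ:qexp},
\[
{\rm {exp}}_q(q^2\psi)(1-(q^2-1)\psi)=\sum_{n\in\mathbb{N}}\frac{q^{n\choose 2}q^{2n}}{[n]_q^!}\psi^n-(q^2-1)\sum_{n\in\mathbb{N}}\frac{q^{n\choose 2}q^{2n}}{[n]_q^!}\psi^{n+1},
\]
so the coefficient of $\psi^n$ for $n\ge 1$ is
\[
\frac{q^{n\choose 2}q^{2n}}{[n]_q^!}-(q^2-1)\frac{q^{n-1\choose 2}q^{2(n-1)}}{[n-1]_q^!}.
\]
We must show this equals $q^{n\choose 2}/[n]_q^!$, the coefficient of $\psi^n$ in ${\rm {exp}}_q(\psi)$; for $n=0$ both sides give $1$.

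First I would rewrite the second term over the common denominator $[n]_q^!$, using $[n]_q^!=[n]_q[n-1]_q^!$, so that the coefficient of $\psi^n$ becomes
\[
\frac{q^{n-1\choose 2}}{[n]_q^!}\Big(q^{(n-1)}q^{2n}-(q^2-1)q^{2n-2}[n]_q\Big),
\]
after pulling out $q^{n-1\choose 2}$ and noting $\binom{n}{2}-\binom{n-1}{2}=n-1$. It then suffices to verify the scalar identity
\[
q^{3n-1}-(q^2-1)q^{2n-2}[n]_q=q^{2n-1},
\]
equivalently $q^{3n-1}-q^{2n-1}=(q^2-1)q^{2n-2}[n]_q$. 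The right side is $(q^2-1)q^{2n-2}\frac{q^n-q^{-n}}{q-q^{-1}}=(q+q^{-1}\cdot q^2-\ldots)$; more cleanly, $(q^2-1)=q(q-q^{-1})$, so $(q^2-1)q^{2n-2}[n]_q=q\cdot q^{2n-2}(q^n-q^{-n})=q^{2n-1}(q^n-q^{-n})=q^{3n-1}-q^{n-1}$. Hmm, that gives $q^{3n-1}-q^{n-1}$, not $q^{3n-1}-q^{2n-1}$, so I should instead use $q^{2n-2}=q^{2n-1}\cdot q^{-1}$ and $(q^2-1)q^{-1}=q-q^{-1}$, giving $(q-q^{-1})q^{2n-1}[n]_q$; but $(q-q^{-1})[n]_q=q^n-q^{-n}$, so this is $q^{2n-1}(q^n-q^{-n})=q^{3n-1}-q^{n-1}$ again. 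The discrepancy signals I should recheck the exponent bookkeeping on the first term — the factor from ${\rm exp}_q(q^2\psi)$ at degree $n-1$ contributes $q^{2(n-1)}$, and I should be careful whether it multiplies $\psi^{n-1}$ or, after multiplication by $-(q^2-1)\psi$, lands at $\psi^n$; redoing this carefully is the one place to be cautious.

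The main (and essentially only) obstacle is thus a bookkeeping check: getting the powers of $q$ coming from $q^{\binom{n}{2}}$ versus $q^{\binom{n-1}{2}}$ and from the rescaling $\psi\mapsto q^2\psi$ exactly right, and confirming the telescoping scalar identity $(q^2-1)q^{2n-2}[n]_q = q^{n-1}(q^{2n}-1)$ — which reduces, via $(q^2-1)=q(q-q^{-1})$ and $(q-q^{-1})[n]_q=q^n-q^{-n}$, to $q^{2n-1}(q^n-q^{-n})=q^{3n-1}-q^{n-1}$, matching $q^{n-1}(q^{2n}-1)$. Once the correct normalization of the first term is pinned down (it should be $q^{\binom{n}{2}}q^{2n}/[n]_q^!$ if the coefficient index is taken before multiplication, but only $q^{\binom{n}{2}}$ survives the cancellation), the identity follows termwise. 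Alternatively, and perhaps more transparently, one can avoid coefficient-chasing: write ${\rm exp}_q(q^2\psi)=\sum_n c_n q^{2n}\psi^n$ with $c_n=q^{\binom n2}/[n]_q^!$, observe the recursion $c_n q^{2n} - c_{n-1}q^{2n-2}(q^2-1)=c_n$ is equivalent to $c_{n-1}(q^2-1)q^{2n-2}=c_n(q^{2n}-1)=c_{n-1}[n]_q^{-1}[n-1]_q^!{}^{-1}\cdots$, i.e. to $[n]_q(q^2-1)q^{2n-2}=q^{n-1}(q^{2n}-1)$, the same scalar identity; I would present whichever of these two equivalent routes is cleanest after the exponents are nailed down.
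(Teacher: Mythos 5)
Your proposal is correct and follows essentially the same route as the paper, whose entire proof is to expand both sides as power series in $\psi$ via \eqref{equ:qexp} and compare coefficients. The only hiccup is the mis-stated target $q^{2n-1}$ in the middle (after factoring out $q^{\binom{n-1}{2}}/[n]_q^!$ the target is $q^{n-1}$, since $\binom{n}{2}-\binom{n-1}{2}=n-1$), but your final paragraph lands on the correct scalar identity $(q^2-1)q^{2n-2}[n]_q=q^{n-1}(q^{2n}-1)$ and verifies it correctly, which closes the argument.
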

\begin{proof}
To verify (\ref {equ:q1}), express each side as a power series in $\psi$ using ({\ref {equ:qexp}}).
\end{proof}

Pick $i\in \mathbb{Z}_4$. By Proposition {\ref{pro:nnil}}, the action of $\mathfrak{n}_{i,i+1}$ on every nonzero finite-dimensional $\square_q$-module is nilpotent. We view ${\rm {exp}}_q(\mathfrak{n}_{i,i+1})$ as an operator that acts on every nonzero finite-dimensional $\square_q$-module. For $i,j\in \mathbb{Z}_4$, consider the following two expressions:
\begin{equation}
\label{equ:71}
{\rm {exp}}_q(\mathfrak{n}_{i,i+1})x_j{\rm {exp}}_q(\mathfrak{n}_{i,i+1})^{-1}, \qquad {\rm {exp}}_q(\mathfrak{n}_{i,i+1})^{-1}x_j{\rm {exp}}_q(\mathfrak{n}_{i,i+1}).
\end{equation}
For each expression in (\ref {equ:71}), expand both $q$-exponential terms using Definition \ref{def:qexp} and Lemma \ref{lem:inv}. This yields a double sum with infinitely many terms. We will show that in fact, each double sum is a polynomial in $\{x_k^{\pm 1}\}_{k\in \mathbb{Z}_4}$. We now give some formulas for later use.
\begin{lem}
\label{lem:qexp}
For $i\in \mathbb{Z}_4$ and $r\in \mathbb{Z}$, the following relations hold on every nonzero finite-dimension $\square_q$-module:
\begin{gather}
{\rm {exp}}_q(q^{2r}\mathfrak{n}_{i,i+1})={\rm {exp}}_q(\mathfrak{n}_{i,i+1})x_i^{-r}x_{i+1}^{-r}, \label{equ:5511}\\ {\rm {exp}}_q(q^{2r}\mathfrak{n}_{i,i+1})=x_i^{-r}x_{i+1}^{-r}{\rm {exp}}_q(\mathfrak{n}_{i,i+1}). \label{equ:5512}
\end{gather}
\end{lem}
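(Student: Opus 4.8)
The plan is to prove both equations by induction on $r$, starting from the base case $r=0$, which is trivial since $\mathrm{exp}_q(\mathfrak{n}_{i,i+1})x_i^0x_{i+1}^0 = \mathrm{exp}_q(\mathfrak{n}_{i,i+1})$. The key engine for the induction step will be the identity (\ref{equ:q1}), which says $\mathrm{exp}_q(q^2\psi)(1-(q^2-1)\psi)=\mathrm{exp}_q(\psi)$, applied with $\psi=q^{2r}\mathfrak{n}_{i,i+1}$; together with the scaling relations (\ref{equ:3101}) and (\ref{equ:3102}) that convert the factor $1-(q^2-1)\psi$ into multiplication by a power of $x_i$ and $x_{i+1}$.

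For (\ref{equ:5511}), I would argue as follows. Assume the relation holds for a given $r\in\mathbb{Z}$. Apply (\ref{equ:q1}) with $\psi = q^{2r}\mathfrak{n}_{i,i+1}$ to obtain
\begin{equation*}
\mathrm{exp}_q(q^{2r+2}\mathfrak{n}_{i,i+1})\bigl(1-(q^{2r+2}-q^{2r})\mathfrak{n}_{i,i+1}\bigr)=\mathrm{exp}_q(q^{2r}\mathfrak{n}_{i,i+1}).
\end{equation*}
The factor $1-(q^{2r+2}-q^{2r})\mathfrak{n}_{i,i+1}$ matches (up to reindexing $n$) the left factor appearing in (\ref{equ:3102}), so $\bigl(1-(q^{2r+2}-q^{2r})\mathfrak{n}_{i,i+1}\bigr)x_{i+1}^{r}x_i^{r} = x_{i+1}^{r+1}x_i^{r+1}$; but I actually need to feed the $x_i^{-r}x_{i+1}^{-r}$ from the induction hypothesis through, so it is cleaner to first multiply the induction hypothesis on the right by $x_{i+1}^{r}x_i^{r}$ and then manipulate, or to use (\ref{equ:3101}) in inverted form on finite-dimensional modules. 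Concretely, using the induction hypothesis $\mathrm{exp}_q(q^{2r}\mathfrak{n}_{i,i+1})=\mathrm{exp}_q(\mathfrak{n}_{i,i+1})x_i^{-r}x_{i+1}^{-r}$ and the relation above, one solves for $\mathrm{exp}_q(q^{2r+2}\mathfrak{n}_{i,i+1})$ and checks that the trailing operator simplifies to $x_i^{-r-1}x_{i+1}^{-r-1}$ by invoking (\ref{equ:3101}) or (\ref{equ:3102}) on $V$. The downward induction (to negative $r$) runs the same argument with the roles reversed, using (\ref{equ:q1}) read in the form $\mathrm{exp}_q(\psi)=\mathrm{exp}_q(q^2\psi)(1-(q^2-1)\psi)$ with $\psi=q^{2r-2}\mathfrak{n}_{i,i+1}$.

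For (\ref{equ:5512}), I would give the analogous argument, now placing the monomial on the left: the relevant scaling identity is (\ref{equ:3102}), which has the factor $1-(q^{2n+2}-q^{2n})\mathfrak{n}_{i,i+1}$ on the left of $x_{i+1}^nx_i^n$. Alternatively, (\ref{equ:5512}) follows from (\ref{equ:5511}) by applying the antiisomorphism $\dagger:\square_q\to\square_{q^{-1}}$ from Lemma \ref{lem:m2}: since $\dagger$ fixes each $x_i$, sends $\mathfrak{n}_{i,i+1}\mapsto-\mathfrak{n}_{i,i+1}$, reverses products, and sends $q\mapsto q^{-1}$, it converts $\mathrm{exp}_q(\mathfrak{n}_{i,i+1})$ into $\mathrm{exp}_{q^{-1}}(-\mathfrak{n}_{i,i+1})=\mathrm{exp}_q(\mathfrak{n}_{i,i+1})^{-1}$, and a short bookkeeping check turns (\ref{equ:5511}) into (\ref{equ:5512}) after also applying the automorphism $\rho$ or taking inverses. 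I would likely present the direct inductive proof of (\ref{equ:5512}) for self-containedness and mention the $\dagger$ shortcut as a remark.

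The main obstacle is purely bookkeeping: tracking the exponents carefully through the identity (\ref{equ:q1}) and lining them up exactly with the $n$-indexed relations (\ref{equ:3101}), (\ref{equ:3102}), (\ref{equ:4}) — in particular making sure that when the monomial $x_i^{-r}x_{i+1}^{-r}$ passes across $\mathfrak{n}_{i,i+1}$ the $q$-powers from (\ref{equ:4}) are accounted for, since (\ref{equ:3101}) and (\ref{equ:3102}) are stated only for $n\in\mathbb{N}$ and must be inverted (legitimately, on a finite-dimensional module, since each $x_i$ acts invertibly by Proposition \ref{pro:fin}) to cover negative exponents. No step is deep; the content is entirely in (\ref{equ:q1}) plus the already-established commutation and scaling relations.
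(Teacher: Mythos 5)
Your main line of argument is essentially the paper's: prove (\ref{equ:5511}) by induction on $r$ (upward for $r\ge 0$, downward for $r<0$), using (\ref{equ:q1}) with $\psi=q^{2r}\mathfrak{n}_{i,i+1}$ and converting the resulting factor $1-(q^{2r+2}-q^{2r})\mathfrak{n}_{i,i+1}$ into a monomial in $x_i^{\pm 1},x_{i+1}^{\pm 1}$ via (\ref{equ:3102}) (resp.\ (\ref{equ:3101})); the inversions you invoke are legitimate since $\mathfrak{n}_{i,i+1}$ acts nilpotently and each $x_k$ acts invertibly. The one divergence is how (\ref{equ:5512}) is deduced. The paper simply applies the antiautomorphism $\phi$ of Lemma \ref{lem:m3} to (\ref{equ:5511}): $\phi$ reverses products, fixes $q$, and permutes the elements $\mathfrak{n}_{j,j+1}$ among themselves, so it carries the family (\ref{equ:5511}) directly onto the family (\ref{equ:5512}). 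Your primary suggestion for (\ref{equ:5512}) --- a parallel induction with the monomial on the left --- works fine, but your proposed $\dagger$ shortcut does not: applying $\dagger$ and then substituting $q\mapsto q^{-1}$ sends (\ref{equ:5511}) to ${\rm exp}_q(q^{-2r}\mathfrak{n}_{i,i+1})^{-1}=x_{i+1}^{-r}x_i^{-r}\,{\rm exp}_q(\mathfrak{n}_{i,i+1})^{-1}$, and taking inverses of both sides returns (\ref{equ:5511}) with $r$ replaced by $-r$, not (\ref{equ:5512}). (Also, $\dagger$ alone sends ${\rm exp}_q(\mathfrak{n}_{i,i+1})$ to ${\rm exp}_q(-\mathfrak{n}_{i,i+1})$ viewed in $\square_{q^{-1}}$; the identification with ${\rm exp}_q(\mathfrak{n}_{i,i+1})^{-1}$ only appears after the substitution $q\mapsto q^{-1}$.) The product-reversing map that converts (\ref{equ:5511}) into (\ref{equ:5512}) must fix $q$, i.e.\ it is $\phi$ or $\varphi$, not $\dagger$. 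With that remark dropped or replaced by $\phi$, your proof is correct and matches the paper's.
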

\begin{proof}
To show ({\ref {equ:5511}}) for $r\geq 0$, use induction on $r$. The calculation is routine using ({\ref {equ:q1}}) with $\psi=q^{2r}\mathfrak{n}_{i,i+1}$ along with ({\ref {equ:3102}}). We similarly show ({\ref {equ:5511}}) for $r< 0$ by induction on $r=-1,-2,\dots$ using ({\ref {equ:3101}}) and ({\ref {equ:q1}}). To get ({\ref {equ:5512}}), apply the map $\phi$ from Lemma {\ref {lem:m3}} to each side of ({\ref {equ:5511}}) and use Lemma {\ref {lll}}.
\end{proof}

\section{Some $q$-exponential formulas, part I}
In this section, we analyze ({\ref {equ:71}}) for the case $j=i$ or $j=i+1$. The following Theorem {\ref {pro:81}} is a variation of \cite[Lemma~5.8, 5.9]{equit}.

\begin{thm}
\label{pro:81}
For $i\in \mathbb{Z}_4$, the following relations hold on every nonzero finite-dimension $\square_q$-module:
\begin{gather}
{\rm {exp}}_q(\mathfrak{n}_{i,i+1})x_{i}{\rm {exp}}_q(\mathfrak{n}_{i,i+1})^{-1}=x_{i+1}^{-1}, \label{equ:vip2}\\
{\rm {exp}}_q(\mathfrak{n}_{i,i+1})^{-1}x_{i+1}{\rm {exp}}_q(\mathfrak{n}_{i,i+1})=x_i^{-1}.\label{equ:vip1}
\end{gather}
\end{thm}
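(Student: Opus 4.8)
The plan is to reduce everything to the single identity \eqref{equ:5511} from Lemma~\ref{lem:qexp}, which already packages the key interaction between ${\rm{exp}}_q(\mathfrak{n}_{i,i+1})$ and the generators $x_i,x_{i+1}$. First I would prove \eqref{equ:vip2}. Starting from \eqref{equ:5511} with $r=-1$, we get ${\rm{exp}}_q(q^{-2}\mathfrak{n}_{i,i+1})={\rm{exp}}_q(\mathfrak{n}_{i,i+1})x_ix_{i+1}$. On the other hand, using the left equation in \eqref{equ:uu} (or its iterate \eqref{equ:4}) we have $x_i\,{\rm{exp}}_q(\mathfrak{n}_{i,i+1})={\rm{exp}}_q(q^{-2}\mathfrak{n}_{i,i+1})x_i$, because conjugating $\mathfrak{n}_{i,i+1}$ by $x_i$ multiplies it by $q^{-2}$ and hence conjugating the power series ${\rm{exp}}_q$ replaces its argument by $q^{-2}\mathfrak{n}_{i,i+1}$ term by term. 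Combining these two displays gives $x_i\,{\rm{exp}}_q(\mathfrak{n}_{i,i+1})={\rm{exp}}_q(\mathfrak{n}_{i,i+1})x_ix_{i+1}x_i$; then I would multiply on the left by ${\rm{exp}}_q(\mathfrak{n}_{i,i+1})^{-1}$ and on the right by $x_i^{-1}x_{i+1}^{-1}$ to isolate ${\rm{exp}}_q(\mathfrak{n}_{i,i+1})^{-1}x_i\,{\rm{exp}}_q(\mathfrak{n}_{i,i+1})$. Actually it is cleaner to compute directly: from $x_i\,{\rm{exp}}_q(\mathfrak{n}_{i,i+1})={\rm{exp}}_q(\mathfrak{n}_{i,i+1})x_ix_{i+1}x_i$ we should instead aim for the stated conjugation ${\rm{exp}}_q(\mathfrak{n}_{i,i+1})x_i{\rm{exp}}_q(\mathfrak{n}_{i,i+1})^{-1}$, so I would rearrange to ${\rm{exp}}_q(\mathfrak{n}_{i,i+1})x_i = x_i\,{\rm{exp}}_q(\mathfrak{n}_{i,i+1})\cdot(\text{correction})$ and read off the answer; the correction term collapses to $x_{i+1}^{-1}$ after using \eqref{equ:1} to simplify $x_ix_{i+1}$.

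A more systematic route, which I would actually follow, is to use both forms \eqref{equ:5511} and \eqref{equ:5512} of Lemma~\ref{lem:qexp}. From \eqref{equ:5512} with $r=1$ we get $x_i^{-1}x_{i+1}^{-1}{\rm{exp}}_q(\mathfrak{n}_{i,i+1})={\rm{exp}}_q(q^2\mathfrak{n}_{i,i+1})$, and from the commutation relation $x_i\,{\rm{exp}}_q(\mathfrak{n}_{i,i+1})={\rm{exp}}_q(q^{-2}\mathfrak{n}_{i,i+1})x_i$ together with the analogous statement for $x_{i+1}$ (which uses the right equation of \eqref{equ:uu}), every occurrence of ${\rm{exp}}_q$ with a scaled argument can be pushed back to ${\rm{exp}}_q(\mathfrak{n}_{i,i+1})$ at the cost of powers of $x_i$ and $x_{i+1}$. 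The upshot is a short chain of equalities in which the $q$-exponential factors cancel and one is left with a Laurent monomial in $x_i,x_{i+1}$; the defining relation \eqref{equ:1} then reduces that monomial to $x_{i+1}^{-1}$, giving \eqref{equ:vip2}.

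For \eqref{equ:vip1} I would not repeat the argument but instead invoke one of the symmetries from Section~4. The antiautomorphism $\dagger:\square_q\to\square_{q^{-1}}$ of Lemma~\ref{lem:m2} fixes each $x_i$ and sends $\mathfrak{n}_{i,i+1}\mapsto-\mathfrak{n}_{i,i+1}$; since $\dagger$ is an antiisomorphism it reverses products, and by Lemma~\ref{lem:inv} it sends ${\rm{exp}}_q(\mathfrak{n}_{i,i+1})$ (computed in $\square_q$) to ${\rm{exp}}_{q^{-1}}(-\mathfrak{n}_{i,i+1})$ (computed in $\square_{q^{-1}}$), which is exactly ${\rm{exp}}_q(\mathfrak{n}_{i,i+1})^{-1}$. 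Applying $\dagger$ to \eqref{equ:vip2} (with $q$ replaced by $q^{-1}$, and then relabeling) therefore turns ${\rm{exp}}_q(\mathfrak{n}_{i,i+1})x_i{\rm{exp}}_q(\mathfrak{n}_{i,i+1})^{-1}=x_{i+1}^{-1}$ into ${\rm{exp}}_q(\mathfrak{n}_{i,i+1})^{-1}x_i{\rm{exp}}_q(\mathfrak{n}_{i,i+1})=x_{i+1}^{-1}$; precomposing with the rotation automorphism $\rho^{-1}$ of Lemma~\ref{lem:m1}, which sends $\mathfrak{n}_{i,i+1}\mapsto\mathfrak{n}_{i-1,i}$ and $x_i\mapsto x_{i-1}$, and then reindexing $i\mapsto i+1$, yields \eqref{equ:vip1}. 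One must check carefully that $\dagger$ really does intertwine the $q$-exponentials across $\square_q$ and $\square_{q^{-1}}$, but this is immediate from the explicit power series and the fact that $q^{\binom n2}$ and $[n]_q^!$ are invariant under $q\mapsto q^{-1}$ up to the signs already accounted for.

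\textbf{Main obstacle.} The only delicate point is bookkeeping: making sure that when a commutation relation is used to move $x_i$ or $x_{i+1}$ past ${\rm{exp}}_q(\mathfrak{n}_{i,i+1})$, the argument of the exponential is rescaled by the correct power of $q^{\pm2}$ and that the resulting Laurent monomial in $x_i,x_{i+1}$ simplifies to a single inverse generator via \eqref{equ:1}. There is no structural difficulty — everything is forced by Lemmas~\ref{lem:qexp} and the relation \eqref{equ:uu} — but it is easy to drop a sign or a factor of $q^2$, so I would write the chain of equalities out explicitly in both \eqref{equ:vip2} and its $\dagger$-image rather than appealing to symmetry too hastily.
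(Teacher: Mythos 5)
Your derivation of \eqref{equ:vip2} is sound and is essentially the paper's own argument seen in a mirror: the paper first proves \eqref{equ:vip1} by combining $x_{i+1}\,{\rm exp}_q(\mathfrak{n}_{i,i+1})x_{i+1}^{-1}={\rm exp}_q(q^2\mathfrak{n}_{i,i+1})$ (from the right-hand relation in \eqref{equ:4}) with \eqref{equ:5511} at $r=1$, and then gets \eqref{equ:vip2} by applying $\phi$; you run the $\phi$-image of that computation directly, using the $x_i$ commutation together with \eqref{equ:5512} at $r=1$. Two small remarks on that half: your first attempt, $x_i\,{\rm exp}_q(\mathfrak{n}_{i,i+1})={\rm exp}_q(\mathfrak{n}_{i,i+1})x_ix_{i+1}x_i$, upon left-multiplying by ${\rm exp}_q(\mathfrak{n}_{i,i+1})^{-1}$ gives \eqref{equ:vip3}, not \eqref{equ:vip2}; and in the correct chain ${\rm exp}_q(\mathfrak{n}_{i,i+1})x_i=x_i\,{\rm exp}_q(q^{2}\mathfrak{n}_{i,i+1})=x_i\cdot x_i^{-1}x_{i+1}^{-1}{\rm exp}_q(\mathfrak{n}_{i,i+1})$ the Laurent monomial collapses by plain cancellation, so \eqref{equ:1} is not actually needed.

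The derivation of \eqref{equ:vip1} from \eqref{equ:vip2}, however, does not work. The map $\dagger$ reverses products \emph{and} sends ${\rm exp}_q(\mathfrak{n}_{i,i+1})$ to the inverse of the corresponding $q^{-1}$-exponential, and these two effects cancel: applying $\dagger$ to ${\rm exp}_q(\mathfrak{n}_{i,i+1})x_i\,{\rm exp}_q(\mathfrak{n}_{i,i+1})^{-1}=x_{i+1}^{-1}$ and then relabeling $q\mapsto q^{-1}$ returns exactly \eqref{equ:vip2}, not the transposed conjugation you claim. Indeed the equation you claim to obtain, ${\rm exp}_q(\mathfrak{n}_{i,i+1})^{-1}x_i\,{\rm exp}_q(\mathfrak{n}_{i,i+1})=x_{i+1}^{-1}$, is false: by \eqref{equ:vip3} that conjugate equals $x_ix_{i+1}x_i$, and $x_ix_{i+1}x_i=x_{i+1}^{-1}$ would force $(x_{i+1}x_i)^2=1$, i.e.\ $\mathfrak{n}_{i,i+1}=0$, on every module. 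Moreover $\rho$ cannot repair this, since it shifts the indices of $x_j$ and of $\mathfrak{n}_{i,i+1}$ in lockstep and so can never interchange the roles of $x_i$ and $x_{i+1}$ relative to the same $\mathfrak{n}_{i,i+1}$. The symmetry that does the job is the one the paper uses: the antiautomorphism $\phi$ (or $\varphi$, depending on $i$) from Lemma \ref{lem:m3}, which swaps $x_i\leftrightarrow x_{i+1}$ while fixing $\mathfrak{n}_{i,i+1}$, hence fixes ${\rm exp}_q(\mathfrak{n}_{i,i+1})$ and only reverses the order of the product; applied to \eqref{equ:vip2} it yields \eqref{equ:vip1} immediately. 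Alternatively, just run your direct argument a second time with the roles of $x_i,x_{i+1}$ and of \eqref{equ:5511}, \eqref{equ:5512} interchanged.
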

\begin{proof}
We first verify (\ref {equ:vip1}). By the equation on the right in (\ref {equ:4}) and Definition {\ref {def:qexp}}, we have
\begin{equation*}
x_{i+1}{\rm {exp}}_q(\mathfrak{n}_{i,i+1})x_{i+1}^{-1}={\rm {exp}}_q(q^2\mathfrak{n}_{i,i+1}).
\end{equation*}
Using this and (\ref {equ:5511}) with $r=1$ we routinely obtain (\ref {equ:vip1}). To get (\ref {equ:vip2}), apply the map $\phi$ from Lemma {\ref {lem:m3}} to each side of (\ref {equ:vip1}).
\end{proof}

The following Theorem {\ref {pro:82}} is a variation of \cite[Lemma~6.1, 6.2]{equit}.
\begin{thm}
\label{pro:82}
For $i\in \mathbb{Z}_4$, the following relations hold on every nonzero finite-dimension $\square_q$-module:
\begin{gather}
{\rm {exp}}_q(\mathfrak{n}_{i,i+1})^{-1}x_{i}{\rm {exp}}_q(\mathfrak{n}_{i,i+1})=x_ix_{i+1}x_i,\label{equ:vip3}\\
{\rm {exp}}_q(\mathfrak{n}_{i,i+1})x_{i+1}{\rm {exp}}_q(\mathfrak{n}_{i,i+1})^{-1}=x_{i+1}x_ix_{i+1}. \label{equ:vip4}
\end{gather}
\end{thm}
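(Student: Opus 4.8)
The plan is to mimic the proof of Theorem \ref{pro:81}, but now conjugating $x_i$ rather than $x_{i+1}$, and then to obtain the second identity from the first by applying one of the antiautomorphisms of Lemma \ref{lem:m3}. First I would record the conjugation rule for $x_i$ against ${\rm {exp}}_q(\mathfrak{n}_{i,i+1})$: by the left equation in (\ref{equ:4}) we have $x_i \mathfrak{n}_{i,i+1}^n = q^{-2n}\mathfrak{n}_{i,i+1}^n x_i$, so term-by-term in (\ref{equ:qexp}) one gets
\begin{equation*}
x_i\,{\rm {exp}}_q(\mathfrak{n}_{i,i+1})\,x_i^{-1}={\rm {exp}}_q(q^{-2}\mathfrak{n}_{i,i+1}).
\end{equation*}
Equivalently, ${\rm {exp}}_q(\mathfrak{n}_{i,i+1})^{-1} x_i\,{\rm {exp}}_q(\mathfrak{n}_{i,i+1}) = x_i\,\big({\rm {exp}}_q(\mathfrak{n}_{i,i+1})^{-1}{\rm {exp}}_q(q^{-2}\mathfrak{n}_{i,i+1})\big)$, so the whole problem reduces to identifying the operator ${\rm {exp}}_q(\mathfrak{n}_{i,i+1})^{-1}{\rm {exp}}_q(q^{-2}\mathfrak{n}_{i,i+1})$.

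The key step is to evaluate that last product using Lemma \ref{lem:qexp}. Taking $r=-1$ in (\ref{equ:5512}) gives ${\rm {exp}}_q(q^{-2}\mathfrak{n}_{i,i+1}) = x_i x_{i+1}\,{\rm {exp}}_q(\mathfrak{n}_{i,i+1})$, hence
\begin{equation*}
{\rm {exp}}_q(\mathfrak{n}_{i,i+1})^{-1} x_i\,{\rm {exp}}_q(\mathfrak{n}_{i,i+1}) = x_i x_i x_{i+1}\cdots
\end{equation*}
wait — more carefully, ${\rm {exp}}_q(\mathfrak{n}_{i,i+1})^{-1}{\rm {exp}}_q(q^{-2}\mathfrak{n}_{i,i+1}) = {\rm {exp}}_q(\mathfrak{n}_{i,i+1})^{-1} x_i x_{i+1}\,{\rm {exp}}_q(\mathfrak{n}_{i,i+1})$, and this is not yet of the desired form. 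So instead I would use (\ref{equ:5511}) with $r=-1$, namely ${\rm {exp}}_q(q^{-2}\mathfrak{n}_{i,i+1}) = {\rm {exp}}_q(\mathfrak{n}_{i,i+1})\,x_i x_{i+1}$, which yields ${\rm {exp}}_q(\mathfrak{n}_{i,i+1})^{-1}{\rm {exp}}_q(q^{-2}\mathfrak{n}_{i,i+1}) = x_i x_{i+1}$, and therefore
\begin{equation*}
{\rm {exp}}_q(\mathfrak{n}_{i,i+1})^{-1} x_i\,{\rm {exp}}_q(\mathfrak{n}_{i,i+1}) = x_i\, x_i x_{i+1}.
\end{equation*}
That is off by one factor of $x_i$ from the claimed $x_i x_{i+1} x_i$, which signals that the bookkeeping of which exponential absorbs the $x_i x_{i+1}$ must be done in the opposite order; the clean route is to start from $x_i^{-1}\,{\rm {exp}}_q(\mathfrak{n}_{i,i+1})^{-1}x_i = {\rm {exp}}_q(q^{2}\mathfrak{n}_{i,i+1})^{-1}$ (the inverse of the displayed conjugation rule, with $q^{-2}$ replaced by $q^{2}$ upon inverting), combine with (\ref{equ:5512}) at $r=1$ which gives ${\rm {exp}}_q(q^{2}\mathfrak{n}_{i,i+1})^{-1} = {\rm {exp}}_q(\mathfrak{n}_{i,i+1})^{-1} x_i x_{i+1}$, and then reorganize to extract $x_i x_{i+1} x_i$ on the right-hand side. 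I would carry out this algebra carefully, using only (\ref{equ:4}), Lemma \ref{lem:inv}, and Lemma \ref{lem:qexp}.

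Once (\ref{equ:vip3}) is established, (\ref{equ:vip4}) follows immediately by applying the antiautomorphism $\phi$ of Lemma \ref{lem:m3} to both sides: $\phi$ fixes $\mathfrak{n}_{i,i+1}$ when $\{i,i+1\}=\{0,1\}$ or $\{2,3\}$ and swaps $\mathfrak{n}_{1,2}\leftrightarrow\mathfrak{n}_{3,0}$, while $\varphi$ handles the remaining pairs; since $\phi$ (resp.\ $\varphi$) swaps $x_i\leftrightarrow x_{i+1}$ in each relevant case and reverses products, applying the appropriate map turns ${\rm {exp}}_q(\mathfrak{n})^{-1}x_i{\rm {exp}}_q(\mathfrak{n})=x_i x_{i+1} x_i$ into ${\rm {exp}}_q(\mathfrak{n})x_{i+1}{\rm {exp}}_q(\mathfrak{n})^{-1}=x_{i+1}x_i x_{i+1}$, using that $\phi$ commutes with ${\rm {exp}}_q$ of a fixed or swapped $\mathfrak{n}$ because it is a power series with scalar coefficients. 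I expect the main obstacle to be purely organizational: getting the order of absorption of the $x_i x_{i+1}$ factor and the sign on the exponent $q^{\pm 2}$ exactly right so that the surviving monomial is $x_i x_{i+1} x_i$ and not $x_i x_i x_{i+1}$; there is no conceptual difficulty beyond the identities already proved, but it is easy to be off by a stray generator, so I would double-check the final relation against a small case (for instance by testing it on a low-dimensional $\square_q$-module, or by verifying consistency with Theorem \ref{pro:81}, e.g.\ $x_{i+1}^{-1} = {\rm {exp}}_q(\mathfrak{n})x_i{\rm {exp}}_q(\mathfrak{n})^{-1}$ should be compatible with the inverse of (\ref{equ:vip3}) after a shift).
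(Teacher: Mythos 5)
Your strategy for (\ref{equ:vip3}) --- conjugate $x_i$ past ${\rm exp}_q(\mathfrak{n}_{i,i+1})$ at the cost of rescaling the argument to $q^{-2}\mathfrak{n}_{i,i+1}$, then absorb the rescaled exponential via Lemma \ref{lem:qexp} --- is sound and does yield the theorem, but both of your attempts to execute it misplace a factor. From $x_i\,{\rm exp}_q(\mathfrak{n}_{i,i+1})\,x_i^{-1}={\rm exp}_q(q^{-2}\mathfrak{n}_{i,i+1})$ you get $x_i\,{\rm exp}_q(\mathfrak{n}_{i,i+1})={\rm exp}_q(q^{-2}\mathfrak{n}_{i,i+1})\,x_i$, so the stray $x_i$ sits on the \emph{right}:
\begin{equation*}
{\rm exp}_q(\mathfrak{n}_{i,i+1})^{-1}x_i\,{\rm exp}_q(\mathfrak{n}_{i,i+1})
=\Bigl({\rm exp}_q(\mathfrak{n}_{i,i+1})^{-1}{\rm exp}_q(q^{-2}\mathfrak{n}_{i,i+1})\Bigr)x_i
=x_ix_{i+1}x_i,
\end{equation*}
using (\ref{equ:5511}) with $r=-1$ exactly as you indicate. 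Your version puts the $x_i$ on the left, producing $x_ix_ix_{i+1}$; and in your ``clean route'' the inverse of (\ref{equ:5512}) at $r=1$ is ${\rm exp}_q(q^{2}\mathfrak{n}_{i,i+1})^{-1}={\rm exp}_q(\mathfrak{n}_{i,i+1})^{-1}x_{i+1}x_i$ (the order of $x_i,x_{i+1}$ reverses upon inverting, and these generators do not commute), not ${\rm exp}_q(\mathfrak{n}_{i,i+1})^{-1}x_ix_{i+1}$. So all the needed ingredients are present, but the proposal as written does not close; the one-line display above is the missing finish.

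Once fixed, your argument is a genuinely different (and comparably short) route from the paper's. The paper instead observes from (\ref{equ:4}) that $x_ix_{i+1}$ commutes with $\mathfrak{n}_{i,i+1}$, hence with ${\rm exp}_q(\mathfrak{n}_{i,i+1})$, writes $x_i=(x_ix_{i+1})x_{i+1}^{-1}$, and invokes (\ref{equ:vip1}): conjugation fixes $x_ix_{i+1}$ and sends $x_{i+1}^{-1}\mapsto x_i$, giving $x_ix_{i+1}x_i$. That derivation reuses Theorem \ref{pro:81} and needs no further contact with Lemma \ref{lem:qexp}, whereas yours re-runs the mechanism behind Theorem \ref{pro:81} directly on $x_i$; either is acceptable. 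Your deduction of (\ref{equ:vip4}) from (\ref{equ:vip3}) via an antiautomorphism matches the paper, except that $\phi$ alone suffices (as $i$ runs over $\mathbb{Z}_4$, applying $\phi$ to the four instances of (\ref{equ:vip3}) already produces all four instances of (\ref{equ:vip4})), so there is no need to bring in $\varphi$.
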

\begin{proof}
We first verify (\ref {equ:vip3}). By (\ref {equ:4}) the element $x_ix_{i+1}$ commutes with $\mathfrak{n}_{i,i+1}$. Therefore ${\rm {exp}}_q(\mathfrak{n}_{i,i+1})^{-1}x_ix_{i+1}{\rm {exp}}_q(\mathfrak{n}_{i,i+1})=x_ix_{i+1}$ in view of Definition {\ref {def:qexp}}. Combine this equation with (\ref {equ:vip1}) to get (\ref {equ:vip3}). To get (\ref {equ:vip4}), apply the map $\phi$ from Lemma {\ref {lem:m3}} to each side of (\ref {equ:vip3}).
\end{proof}

\section{Some $q$-exponential formulas, part II}
In this section, we analyze ({\ref {equ:71}}) for the case $j=i+2$ or $j=i+3$.
\begin{lem}
\label{lem:rec1}
For $i\in \mathbb{Z}_4$, the following relations hold in ${\square}_q$:
\begin{gather}
\sum_{m=0}^{3}(-1)^mq^{3-2m}\frac{\mathfrak{n}_{i,i+1}^{3-m}}{[3-m]_q^!}{x}_{i+2}\frac{\mathfrak{n}_{i,i+1}^m}{[m]_q^!}=-(q-q^{-1})^2\mathfrak{n}_{i,i+1}{x}_i\mathfrak{n}_{i,i+1}, \label{equ:rec1}\\
\sum_{m=0}^{3}(-1)^mq^{3-2m}\frac{\mathfrak{n}_{i,i+1}^{m}}{[m]_q^!}{x}_{i+3}\frac{\mathfrak{n}_{i,i+1}^{3-m}}{[3-m]_q^!}=-(q-q^{-1})^2\mathfrak{n}_{i,i+1}{x}_{i+1}\mathfrak{n}_{i,i+1}. \label{equ:rec2}
\end{gather}
\end{lem}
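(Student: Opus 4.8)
The plan is to derive both identities directly from the cubic $q$-Serre relation (\ref{equ:2}) by rewriting it in terms of $\mathfrak{n}_{i,i+1}$. First I would use the defining relation (\ref{equ:1}), in the form $x_ix_{i+1}=1-(q-q^{-1})q^{-1}\mathfrak{n}_{i,i+1}$ and its companion $x_{i+1}x_i=1-(q-q^{-1})q\mathfrak{n}_{i,i+1}$, to express every product of the form $x_i^a x_{i+2} x_i^b$ appearing in (\ref{equ:2}) in a normal form involving only $\mathfrak{n}_{i,i+1}$, $x_{i+2}$, and powers of $x_i$. The cleaner route, though, is to conjugate: by (\ref{equ:uu}) we have $x_i \mathfrak{n}_{i,i+1} = q^{-2}\mathfrak{n}_{i,i+1}x_i$, so the key technical step is to move the factors $x_i^k$ in the Serre relation past $x_{i+2}$ using (\ref{equ:c3}), and then repeatedly apply (\ref{equ:4}) to collect all $\mathfrak{n}_{i,i+1}$ factors on the outside. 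The target left-hand side of (\ref{equ:rec1}) is exactly the alternating sum $\sum_{m=0}^{3}(-1)^m q^{3-2m}\binom{3}{m}_q^{-1}\cdot(\text{reindexed}) \mathfrak{n}_{i,i+1}^{3-m}x_{i+2}\mathfrak{n}_{i,i+1}^{m}$ once one normalizes the $q$-binomial coefficients, so the combinatorial identity to verify is that this sum reproduces the $[3]_q$-weighted alternating sum in (\ref{equ:2}).

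Concretely, I would proceed as follows. Step one: starting from (\ref{equ:c3}) with $n=3$, which is just (\ref{equ:2}) rearranged, substitute $x_i^a x_{i+2} x_i^{b}$ by expressions of the form $x_i^{a+b}(\text{poly in }\mathfrak{n}_{i,i+1})x_{i+2}$ is not quite available since $x_{i+2}$ does not commute nicely with $\mathfrak{n}_{i,i+1}$; instead, use that conjugation by $x_i$ acts on $\mathfrak{n}_{i,i+1}$ by $q^{-2}$ to write, for each monomial $x_i^{3-m}x_{i+2}x_i^{m}$, the identity
\begin{equation*}
x_i^{3-m} x_{i+2} x_i^{m} = x_i^{3}\,\big(x_i^{-m}x_{i+2}x_i^{m}\big),
\end{equation*}
and then replace the bracketed operator using the relation between $x_i^{-1}x_{i+2}x_i$ and the $\mathfrak{n}$'s that follows from (\ref{equ:1}). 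Step two: expand $\mathfrak{n}_{i,i+1}=q(1-x_ix_{i+1})/(q-q^{-1})$ throughout and collect terms, so that both sides of (\ref{equ:rec1}) become noncommutative polynomials in $x_i, x_{i+1}, x_{i+2}$; the identity then reduces, after using (\ref{equ:1}) and (\ref{equ:2}), to a polynomial identity in $\mathbb{Z}_4$-indexed generators that can be checked termwise. Step three: once (\ref{equ:rec1}) is established, obtain (\ref{equ:rec2}) by applying one of the antiautomorphisms from Lemma \ref{lem:m3} — specifically $\varphi$ sends $x_1\leftrightarrow x_2$, $x_3\leftrightarrow x_0$ and fixes $\mathfrak{n}_{1,2}$, $\mathfrak{n}_{3,0}$, and more generally reverses products while trading $x_{i+2}$ against $x_{i+3}$ in the required way, turning (\ref{equ:rec1}) into (\ref{equ:rec2}) after relabeling $i$.

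The main obstacle I anticipate is bookkeeping in the normal-ordering step: the factor $q^{3-2m}$ together with the reciprocal $q$-factorials $1/([3-m]_q^!\,[m]_q^!)$ must combine, after pulling all $\mathfrak{n}_{i,i+1}$ to one side via (\ref{equ:4}), into precisely the coefficients $1, -[3]_q, [3]_q, -1$ that appear in the Serre relation (\ref{equ:2}); getting the powers of $q$ to match requires carefully tracking how many times each $\mathfrak{n}_{i,i+1}$ is commuted past an $x_i$ versus an $x_{i+1}$ (contributing $q^{-2}$ or $q^{2}$ respectively), and an off-by-one in these exponents would break the identity. I would handle this by first checking the $m=0$ and $m=3$ extreme terms (which involve only $\mathfrak{n}^3 x_{i+2}$ and $x_{i+2}\mathfrak{n}^3$, hence no internal commutation) against the $x_{i+2}x_i^3$ and $x_i^3x_{i+2}$ terms of (\ref{equ:2}), then verifying the middle terms $m=1,2$, and finally confirming that the right-hand side $-(q-q^{-1})^2\mathfrak{n}_{i,i+1}x_i\mathfrak{n}_{i,i+1}$ is what remains after all cancellations — note this right-hand side is itself not zero, reflecting that (\ref{equ:rec1}) is a deformation of the classical Serre relation rather than the relation itself, so the computation genuinely needs the $\mathfrak{n}$-corrections rather than just (\ref{equ:2}).
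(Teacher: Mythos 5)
Your fallback route---step two plus step three---is essentially the paper's own proof: the paper forms the difference $\Theta$ of the two sides of (\ref{equ:rec1}), eliminates every occurrence of $\mathfrak{n}_{i,i+1}$ via (\ref{equ:3}), normal-orders by pushing each $x_{i+1}$ to the far left using (\ref{equ:1}), and finds after a long termwise tabulation that $\Theta$ collapses to a scalar multiple of $x_{i+1}^3$ times the left side of the Serre relation (\ref{equ:2}), hence is zero; it then deduces (\ref{equ:rec2}) by applying the antiautomorphism $\phi$ of Lemma \ref{lem:m3}. Your choice of $\varphi$ instead of $\phi$ works equally well, since either map carries the family of identities (\ref{equ:rec1}), taken over all $i\in\mathbb{Z}_4$, onto the family (\ref{equ:rec2}) up to a permutation of the index. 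Your observation that $(-1)^m q^{3-2m}[3]_q^!/([3-m]_q^![m]_q^!)$ reproduces the coefficients $1,-[3]_q,[3]_q,-1$ of (\ref{equ:2}) up to powers of $q$ is also the right heuristic for why the Serre relation is what must close the computation, and your remark that the nonzero right-hand side forces genuine $\mathfrak{n}$-corrections is accurate.

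The ``cleaner route'' you sketch in step one, however, would fail, for two concrete reasons. First, $x_i^{-1}$ is not an element of $\square_q$: it is defined only as an operator on nonzero finite-dimensional modules (Definition \ref{def:ginv}, via Proposition \ref{pro:fin}), whereas the lemma asserts identities in the algebra $\square_q$ itself; an argument writing $x_i^{3-m}x_{i+2}x_i^{m}=x_i^{3}\bigl(x_i^{-m}x_{i+2}x_i^{m}\bigr)$ could at best prove the module-level statement, and finite-dimensional modules are not known to separate points of $\square_q$. Second, and more fatally, there is no ``relation between $x_i^{-1}x_{i+2}x_i$ and the $\mathfrak{n}$'s that follows from (\ref{equ:1})'': relation (\ref{equ:1}) involves only consecutive generators, and the only constraint linking $x_i$ to $x_{i+2}$ is the cubic relation (\ref{equ:2}), which is not a conjugation formula. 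Likewise (\ref{equ:uu}) and (\ref{equ:4}) govern how $\mathfrak{n}_{i,i+1}$ moves past $x_i$ and $x_{i+1}$ but say nothing about moving it past $x_{i+2}$, so ``collecting all $\mathfrak{n}_{i,i+1}$ factors on the outside'' is not available. Discard step one, carry out step two honestly (the bookkeeping is substantial---the paper tabulates on the order of thirty monomials---but mechanical), and the proof is complete.
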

\begin{proof}
To verify (\ref {equ:rec1}) let $\Theta$ denote the left-hand side of (\ref {equ:rec1}) minus the right-hand side of (\ref {equ:rec1}). We will show that $\Theta=0$. To do this we first eliminate each occurrence of $\mathfrak{n}_{i,i+1}$ in $\Theta$ using the second equality in ({\ref {equ:3}}). In the resulting equation, we simplify things using the following principle: for each occurrence of $x_{i+1}$, move it to the leftmost factor using (\ref {equ:1}). The above simplification yields the following results.\\

The expression $q^3(q-q^{-1})^3\mathfrak{n}_{i,i+1}^3{x}_{i+2}$ is a weighted sum involving the following terms and coefficients:\\
\\
\bigskip
\centerline{
\begin{tabular}[t]{c|c c c c}
 term & ${x}_{i+2}$ & ${x}_{i+1}{x}_i{x}_{i+2}$ & ${x}_{i+1}^2{x}_i^2{x}_{i+2}$& ${x}_{i+1}^3{x}_i^3{x}_{i+2}$
   \\ \hline
coeff. & $1$ &
$-q^{-2}[3]_q$ & $q^{-4}[3]_q$ & $-q^{-6}$
\\
 \end{tabular}}

The expression $q^3(q-q^{-1})^3\mathfrak{n}_{i,i+1}^2{x}_{i+2}\mathfrak{n}_{i,i+1}$ is a weighted sum involving the following terms and coefficients:\\
\\
\bigskip
\centerline{
\begin{tabular}[t]{c|c c c c c}
 term & ${x}_{i+2}$ & ${x}_{i}$ &  ${x}_{i+1}{x}_i{x}_{i+2}$ & ${x}_{i+1}{x}_{i+2}{x}_{i}$& ${x}_{i+1}{x}_i^2$
   \\ \hline
coeff. & $1$ & $q^2-1$ &
$-1-q^{-2}$ & $-q^{-2}$ & $q^{-2}-q^{2}$
\\
 \end{tabular}}
     \bigskip
\centerline{
\begin{tabular}[t]{c|c c c c}
 term & ${x}_{i+1}^2{x}_{i}^2{x}_{i+2}$ & ${x}_{i+1}^2{x}_i{x}_{i+2}{x}_{i}$ & ${x}_{i+1}^2{x}_i^3$& ${x}_{i+1}^3{x}_i^2{x}_{i+2}{x}_{i}$
   \\ \hline
coeff. & $q^{-2}$ &
$q^{-2}+q^{-4}$ & $1-q^{-2}$ & $-q^{-4}$
\\
 \end{tabular}}

The expression $q^3(q-q^{-1})^3\mathfrak{n}_{i,i+1}{x}_{i+2}\mathfrak{n}_{i,i+1}^2$ is a weighted sum involving the following terms and coefficients:\\
\\
\bigskip
\centerline{
\begin{tabular}[t]{c|c c c c c}
 term & ${x}_{i+2}$ & ${x}_{i}$ &  ${x}_{i+1}{x}_i{x}_{i+2}$ & ${x}_{i+1}{x}_{i+2}{x}_{i}$& ${x}_{i+1}{x}_i^2$
   \\ \hline
coeff. & $1$ & $q^2-q^{-2}$ &
$-1$ & $-1-q^{-2}$ & $q^{-1}(q^{-1}+q)(q^{-2}-q^2)$
\\
 \end{tabular}}
     \bigskip
\centerline{
\begin{tabular}[t]{c|c c c c}
 term & ${x}_{i+1}^2{x}_{i+2}{x}_{i}^2$ & ${x}_{i+1}^2{x}_i{x}_{i+2}{x}_{i}$ & ${x}_{i+1}^2{x}_i^3$& ${x}_{i+1}^3{x}_i{x}_{i+2}{x}_{i}^2$
   \\ \hline
coeff. & $q^{-2}$ &
$q^{-2}+1$ & $1-q^{-4}$ & $-q^{-2}$
\\
 \end{tabular}}

The expression $q^3(q-q^{-1})^3{x}_{i+2}\mathfrak{n}_{i,i+1}^3$ is a weighted sum involving the following terms and coefficients:\\
\\
\bigskip
\centerline{
\begin{tabular}[t]{c|c c c c}
 term & ${x}_{i+2}$ & ${x}_{i}$ &  ${x}_{i+1}{x}_{i+2}{x}_{i}$ &  ${x}_{i+1}{x}_i^2$
   \\ \hline
coeff. & $1$ & $q^2-q^{-4}$ &
$-[3]_q$ & $q^{-2}(q^{-2}-q^2)[3]_q$
\\
 \end{tabular}}
     \bigskip
\centerline{
\begin{tabular}[t]{c|c c c}
 term & ${x}_{i+1}^2{x}_{i+2}{x}_{i}^2$  & ${x}_{i+1}^2{x}_i^3$& ${x}_{i+1}^3{x}_{i+2}{x}_{i}^3$
   \\ \hline
coeff. & $[3]_q$ &
$1-q^{-6}$ & $-1$
\\
 \end{tabular}}

The expression $q^4(q-q^{-1})^2\mathfrak{n}_{i,i+1}{x}_i\mathfrak{n}_{i,i+1}$ is a weighted sum involving the following terms and coefficients:\\
\\
\bigskip
\centerline{
\begin{tabular}[t]{c|c c c}
 term & ${x}_i$ & ${x}_{i+1}{x}_i^2$ & ${x}_{i+1}^2{x}_i^3$
   \\ \hline
coeff. & $1$ &
$-1-q^{-2}$ & $q^{-2}$
\\
 \end{tabular}}

By the above comments $\Theta$ is equal to
\begin{equation}
\label{equ:tem}
\frac{-{x}_{i+1}^3({x}_i^3{x}_{i+2}-[3]_q{x}_i^2{x}_{i+2}{x}_i+[3]_q{x}_i{x}_{i+2}{x}_i^2-{x}_{i+2}{x}_i^3)}{q^6(q-q^{-1})^3[3]_q^!}.
\end{equation}

The expression (\ref {equ:tem}) is $0$ by (\ref {equ:2}). Therefore $\Theta=0$. We have shown (\ref {equ:rec1}). To get (\ref {equ:rec2}) apply the map ${\phi}$ from Lemma {\ref {lem:m3}} to each side of (\ref {equ:rec1}).
\end{proof}

\begin{lem}
\label{lem:rec3}
For $i\in \mathbb{Z}_4$ and $m\in\mathbb{N}$, the following relations hold in ${\square}_q$:
\begin{equation}
\begin{aligned}
\label{equ:rac1}
x_{i+2}\mathfrak{n}_{i,i+1}^m&=
a_m(q)\mathfrak{n}_{i,i+1}^mx_{i+2}+b_m(q)\mathfrak{n}_{i,i+1}^{m-1}x_{i+2}\mathfrak{n}_{i,i+1}\\
&+\ c_m(q)\mathfrak{n}_{i,i+1}^{m-2}x_{i+2}n_{i,i+1}^2+d_m(q)\mathfrak{n}_{i,i+1}^{m-1}x_{i},
\end{aligned}
\end{equation}
\begin{equation}
\begin{aligned}
\label{equ:rac2}
\mathfrak{n}_{i,i+1}^mx_{i+3}&=a_m(q)x_{i+3}\mathfrak{n}_{i,i+1}^m+b_m(q)\mathfrak{n}_{i,i+1}x_{i+3}\mathfrak{n}_{i,i+1}^{m-1}\\
&+\ c_m(q)\mathfrak{n}_{i,i+1}^2x_{i+3}\mathfrak{n}_{i,i+1}^{m-2}+d_m(q)x_{i+1}\mathfrak{n}_{i,i+1}^{m-1},
\end{aligned}
\end{equation}
\begin{equation}
\begin{aligned}
\label{equ:rac3}
\mathfrak{n}_{i,i+1}^mx_{i+2}&=a_m(q^{-1})x_{i+2}\mathfrak{n}_{i,i+1}^m+b_m(q^{-1})\mathfrak{n}_{i,i+1}x_{i+2}\mathfrak{n}_{i,i+1}^{m-1} \\
&+\ c_m(q^{-1})\mathfrak{n}_{i,i+1}^2x_{i+2}\mathfrak{n}_{i,i+1}^{m-2}-d_m(q^{-1})x_{i}\mathfrak{n}_{i,i+1}^{m-1},
\end{aligned}
\end{equation}
\begin{equation}
\begin{aligned}
\label{equ:rac4}
x_{i+3}\mathfrak{n}_{i,i+1}^m&=a_m(q^{-1})\mathfrak{n}_{i,i+1}^mx_{i+3}+b_m(q^{-1})\mathfrak{n}_{i,i+1}^{m-1}x_{i+3}\mathfrak{n}_{i,i+1} \\
&+\ c_m(q^{-1})\mathfrak{n}_{i,i+1}^{m-2}x_{i+3}\mathfrak{n}_{i,i+1}^2-d_m(q^{-1})\mathfrak{n}_{i,i+1}^{m-1}x_{i+1},
\end{aligned}
\end{equation}
where
\begin{gather*}
a_m(q)=q^{2m}\frac{[m-1]_q[m-2]_q}{[2]_q},\\
b_m(q)=-q^{2m-2}[m]_q[m-2]_q,\\
c_m(q)=q^{2m-4}\frac{[m]_q[m-1]_q}{[2]_q},\\
d_m(q)=q^{m-5}[3m]_{q}-q^{-3}[3]_q[2m]_q+q^{-m-1}[3]_q[m]_q.
\end{gather*}
\end{lem}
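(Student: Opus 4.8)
The plan is to prove the four identities \eqref{equ:rac1}--\eqref{equ:rac4} by induction on $m$, exploiting the symmetries already established so that only one of the four requires real work. Concretely, I would first observe that \eqref{equ:rac3} is obtained from \eqref{equ:rac1} by applying the antiisomorphism $\dagger:\square_q\to\square_{q^{-1}}$ of Lemma~\ref{lem:m2}, which fixes each $x_j$, sends $\mathfrak{n}_{i,i+1}\mapsto-\mathfrak{n}_{i,i+1}$, and reverses products; the signs $(-1)$ attached to $d_m$ and the replacement $q\mapsto q^{-1}$ in $a_m,b_m,c_m,d_m$ are exactly what $\dagger$ produces. Likewise \eqref{equ:rac2} is obtained from \eqref{equ:rac1} by applying $\phi$ from Lemma~\ref{lem:m3} (note that for general $i$ one uses the appropriate element of the group $G$ of Lemma~\ref{lem:dd} carrying the pair $(i,i+1)$ to itself while swapping $i+2\leftrightarrow i+3$), and \eqref{equ:rac4} follows from \eqref{equ:rac2} by $\dagger$ (or from \eqref{equ:rac3} by $\phi$). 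So the entire lemma reduces to proving \eqref{equ:rac1}.

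For \eqref{equ:rac1} I would induct on $m$. The cases $m=0,1,2$ should be checked directly: for $m=0$ both sides equal $x_{i+2}$ (so $a_0=1$ and $b_0=c_0=d_0=0$, which one verifies from the closed forms since $[0]_q=0$, $[-1]_q=-1$, $[-2]_q=-[2]_q$ give $a_0=1$, etc.); for $m=1$ the identity reads $x_{i+2}\mathfrak{n}_{i,i+1}=a_1\mathfrak{n}_{i,i+1}x_{i+2}+d_1 x_i$ with $a_1=0$, $b_1=c_1=0$, and the coefficient $d_1=q^{-4}[3]_q-q^{-3}[3]_q[2]_q+q^{-2}[3]_q$, which one checks against the $m=1$ instance of the recursion below. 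For the inductive step, write $x_{i+2}\mathfrak{n}_{i,i+1}^{m+1}=(x_{i+2}\mathfrak{n}_{i,i+1}^m)\,\mathfrak{n}_{i,i+1}$, substitute the induction hypothesis, and then push the single remaining $\mathfrak{n}_{i,i+1}$ through the term $d_m(q)\mathfrak{n}_{i,i+1}^{m-1}x_i$ using \eqref{equ:uu}, i.e. $x_i\mathfrak{n}_{i,i+1}=q^{-2}\mathfrak{n}_{i,i+1}x_i$. This produces a linear relation among the four "basis shapes'' $\mathfrak{n}^{m+1}x_{i+2}$, $\mathfrak{n}^m x_{i+2}\mathfrak{n}$, $\mathfrak{n}^{m-1}x_{i+2}\mathfrak{n}^2$, $\mathfrak{n}^{m-2}x_{i+2}\mathfrak{n}^3$, $\mathfrak{n}^m x_i$; the key input that closes the induction is Lemma~\ref{lem:rec1}, equation \eqref{equ:rec1}, which expresses the shape $\mathfrak{n}^{m-2}x_{i+2}\mathfrak{n}^3$ (applied with $\mathfrak{n}^{m-2}$ on the left and using that $x_i$ commutes with $\mathfrak{n}$ up to a power of $q$) back in terms of the shapes with at most two $\mathfrak{n}$'s flanking $x_{i+2}$ plus a term $\mathfrak{n}^{m-1}x_i\mathfrak{n}$. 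After collecting coefficients, verifying \eqref{equ:rac1} at level $m+1$ amounts to checking the four polynomial identities
\begin{align*}
a_{m+1}(q)&=a_m(q),\\
b_{m+1}(q)&=a_m(q)+b_m(q),\\
c_{m+1}(q)&=b_m(q)+c_m(q),\\
d_{m+1}(q)&=q^{-2}d_m(q)+(\text{contribution from }\eqref{equ:rec1}),
\end{align*}
where the contribution from \eqref{equ:rec1} is the multiple of $x_i$ produced when the $c_m$-term is rewritten; here I would be careful that the first three are not literally $a_{m+1}=a_m$ etc.\ but rather the correct recursions dictated by multiplying \eqref{equ:rec1} through — the point is only that each is a rational-function identity in $q$ among products of $q$-integers, provable by clearing denominators and using $[a]_q[b]_q$ telescoping, or simply by expanding everything in terms of $q^{\pm k}$.

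The main obstacle I anticipate is bookkeeping rather than conceptual: correctly determining how \eqref{equ:rec1} feeds into the inductive step, since \eqref{equ:rec1} is stated as a "four-term'' relation with the $\mathfrak{n}_{i,i+1}$'s distributed on both sides of $x_{i+2}$, and one must factor out the appropriate power $\mathfrak{n}_{i,i+1}^{m-2}$ on the left and commute it correctly (picking up powers of $q$ from \eqref{equ:4}) before the coefficients $a_m,b_m,c_m,d_m$ emerge in the advertised closed form. I expect the verification of the $d_m(q)$ recursion — which mixes the $q^{-2}$-twist of $d_m$ with the $[3]_q$-laden contribution coming from \eqref{equ:rec1} and the commutation of $x_i$ past $\mathfrak{n}_{i,i+1}$ — to be the delicate computation; the $a_m,b_m,c_m$ recursions are essentially the classical $q$-Serre recursion already packaged in \eqref{equ:c3} and should go through mechanically. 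Once \eqref{equ:rac1} is established, \eqref{equ:rac2}--\eqref{equ:rac4} follow immediately by applying $\phi$ and $\dagger$ as above, being careful that $\dagger$ reverses the order of all factors (so $\mathfrak{n}^{m}x_{i+2}\mapsto x_{i+2}\mathfrak{n}^{m}$, etc.)\ and negates $\mathfrak{n}_{i,i+1}$, which accounts precisely for the sign changes in the $d$-terms of \eqref{equ:rac3} and \eqref{equ:rac4}.
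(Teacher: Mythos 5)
Your proposal follows essentially the same route as the paper: prove \eqref{equ:rac1} by induction on $m$, with Lemma~\ref{lem:rec1} supplying the step that converts the shape $\mathfrak{n}_{i,i+1}^{m-2}x_{i+2}\mathfrak{n}_{i,i+1}^{3}$ back into the four allowed shapes, and then obtain \eqref{equ:rac2}--\eqref{equ:rac4} by applying $\phi$ and $\dagger$ exactly as the paper does. One slip in your base case: $b_1(q)=-q^{0}[1]_q[-1]_q=1$, not $0$, so the $m=1$ instance of \eqref{equ:rac1} is the tautology $x_{i+2}\mathfrak{n}_{i,i+1}=x_{i+2}\mathfrak{n}_{i,i+1}$ (indeed $a_1=c_1=d_1=0$), rather than the relation $x_{i+2}\mathfrak{n}_{i,i+1}=a_1\mathfrak{n}_{i,i+1}x_{i+2}+d_1x_i$ you wrote, which with your stated values would assert $x_{i+2}\mathfrak{n}_{i,i+1}=0$.
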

\begin{proof}
To get (\ref {equ:rac1}), use (\ref {equ:2}), (\ref{equ:rec1}) and induction on $m$. To get (\ref {equ:rac2}), apply the map ${\phi}$ from Lemma {\ref {lem:m3}} to each side of (\ref {equ:rac1}). Concerning (\ref {equ:rac3}), apply the map $\dagger$ from Lemma {\ref {lem:m2}} to each side of (\ref {equ:rac1}). This yields an equation that holds in ${\square}_{q^{-1}}$. In this equation replace $q$ by $q^{-1}$. This gives (\ref {equ:rac3}). To get (\ref {equ:rac4}), apply ${\phi}$ to each side of (\ref {equ:rac3}).
\end{proof}

We now analyze ({\ref {equ:71}}) for the case $j=i+3$.
\begin{thm}
\label{pro:vip1}
For $i\in \mathbb{Z}_4$, the following relation holds on every nonzero finite-dimensional ${\square}_q$-module:
\begin{flalign*}
{\rm {exp}}_q(\mathfrak{n}_{i,i+1})^{-1}x_{i+3}{\rm {exp}}_q(\mathfrak{n}_{i,i+1}) =
x_{i+3}-x_{i}^{-1}+\frac{qx_{i}x_{i+1}x_{i+3}}{q-q^{-1}}
-\frac{x_ix_{i+3}x_{i+1}}{q(q-q^{-1})}\\
+\frac{q^3x_i^2x_{i+1}^2x_{i+3}}{(q-q^{-1})(q^2-q^{-2})}
+\frac{qx_i^2x_{i+3}x_{i+1}^2}{(q-q^{-1})(q^2-q^{-2})}
-\frac{q^2x_i^2x_{i+1}x_{i+3}x_{i+1}}{(q-q^{-1})^2}.
\end{flalign*}
\end{thm}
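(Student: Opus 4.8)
The plan is to conjugate $x_{i+3}$ by $\mathrm{exp}_q(\mathfrak{n}_{i,i+1})$ directly using the power-series definition, and then use the recursion of Lemma~\ref{lem:rec3} to collapse the resulting infinite sum. Writing $\mathfrak{n}=\mathfrak{n}_{i,i+1}$ for brevity, expand
\[
\mathrm{exp}_q(\mathfrak{n})^{-1}x_{i+3}\,\mathrm{exp}_q(\mathfrak{n})
=\sum_{k,\ell\in\mathbb{N}}\frac{(-1)^k q^{-\binom{k}{2}+\binom{\ell}{2}}}{[k]_q^!\,[\ell]_q^!}\,\mathfrak{n}^k x_{i+3}\,\mathfrak{n}^\ell,
\]
using Lemma~\ref{lem:inv} for the inverse exponential. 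The key move is to push all powers of $\mathfrak{n}$ to one side of $x_{i+3}$: apply (\ref{equ:rac2}) from Lemma~\ref{lem:rec3} to rewrite $\mathfrak{n}^k x_{i+3}$, for each $k$, as a combination of $x_{i+3}\mathfrak{n}^k$, $\mathfrak{n}x_{i+3}\mathfrak{n}^{k-1}$, $\mathfrak{n}^2 x_{i+3}\mathfrak{n}^{k-2}$ and $x_{i+1}\mathfrak{n}^{k-1}$, with coefficients $a_k(q), b_k(q), c_k(q), d_k(q)$. Substituting this into the double sum and regrouping, the whole expression becomes a finite $\mathbb{F}$-linear combination of the four "building blocks"
\[
\mathrm{exp}_q(\mathfrak{n})^{-1}x_{i+3}\,\mathrm{exp}_q(\mathfrak{n}),\quad \mathrm{exp}_q(\mathfrak{n})^{-1}x_{i+3}\,\mathrm{exp}_q(q^{2r}\mathfrak{n}),\quad\text{etc.,}
\]
i.e. of terms of the form $\mathrm{exp}_q(\mathfrak{n})^{-1}x_{i+3}\,\mathrm{exp}_q(q^{2a}\mathfrak{n})$ and $\mathrm{exp}_q(\mathfrak{n})^{-1}x_{i+1}\,\mathrm{exp}_q(q^{2b}\mathfrak{n})$ for small $a,b$. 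The precise combination is determined by resumming the generating functions $\sum_k a_k(q)(-1)^kq^{-\binom{k}{2}}\mathfrak{n}^k/[k]_q^!$, and similarly for $b_k,c_k,d_k$, against the factor coming from $\sum_\ell q^{\binom{\ell}{2}}\mathfrak{n}^\ell/[\ell]_q^!=\mathrm{exp}_q(\mathfrak{n})$; here one exploits that $a_k,b_k,c_k$ are monomials in $q^{2k}$ times a product of $q$-integers, so each resummation telescopes into a shifted $q$-exponential $\mathrm{exp}_q(q^{2r}\mathfrak{n})$ up to polynomial prefactors, exactly as in the derivation of (\ref{equ:5511}).

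The second stage is to replace every shifted exponential by a monomial in the $x_k^{\pm1}$ using Lemma~\ref{lem:qexp}: from (\ref{equ:5511}) we have $\mathrm{exp}_q(q^{2r}\mathfrak{n})=\mathrm{exp}_q(\mathfrak{n})x_i^{-r}x_{i+1}^{-r}$, so that
\[
\mathrm{exp}_q(\mathfrak{n})^{-1}x_{i+3}\,\mathrm{exp}_q(q^{2r}\mathfrak{n})
=\bigl(\mathrm{exp}_q(\mathfrak{n})^{-1}x_{i+3}\,\mathrm{exp}_q(\mathfrak{n})\bigr)x_i^{-r}x_{i+1}^{-r},
\]
and likewise with $x_{i+3}$ replaced by $x_{i+1}$, where Theorem~\ref{pro:81} gives $\mathrm{exp}_q(\mathfrak{n})^{-1}x_{i+1}\,\mathrm{exp}_q(\mathfrak{n})=x_i^{-1}$. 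After this substitution, the only unknown left on the right-hand side is $F:=\mathrm{exp}_q(\mathfrak{n})^{-1}x_{i+3}\,\mathrm{exp}_q(\mathfrak{n})$ itself, appearing linearly; the resummation in the first stage is engineered (via the $m=0,1,2$ boundary terms of the recursion, which is why Lemma~\ref{lem:rec1} was needed to pin down the $d_m$) so that the net coefficient of $F$ on the right is not $1$. Solving the resulting scalar-coefficient linear equation for $F$ yields a closed formula, and collecting the monomials $x_i^{-r}x_{i+1}^{-r}$-tails against the known pieces produces precisely the seven terms in the statement.

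Concretely, I would organize the computation as follows. First, record the generating-function identities: for the sequences $a_m,b_m,c_m$ one checks directly from the closed forms that
\[
\sum_{m\ge 0}\frac{(-1)^m q^{-\binom{m}{2}}}{[m]_q^!}\,a_m(q)\,t^m,\quad\text{and similarly for }b_m,c_m,d_m,
\]
are each, up to an explicit rational function of $q$, of the form $\mathrm{exp}_{q^{-1}}(-q^{2r}t)$ for suitable $r\in\{0,1,2\}$ — this is the computational heart and the step I expect to be the main obstacle, since the $d_m(q)=q^{m-5}[3m]_q-q^{-3}[3]_q[2m]_q+q^{-m-1}[3]_q[m]_q$ term splits into three geometric-type pieces each of which resums to a different shift. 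Second, feed these into the double sum, use Lemma~\ref{lem:qexp} and Theorems~\ref{pro:81}, \ref{pro:82} to turn shifted exponentials into $x_k^{\pm1}$-monomials, collect the coefficient of $F$, and solve. Third, simplify the $q$-coefficients to match the displayed form; the denominators $q-q^{-1}$, $q^2-q^{-2}$ and $(q-q^{-1})^2$ in the statement are exactly the ones produced by $[2]_q$ and $[3]_q$ in $a_m,b_m,c_m$ and by the $-(q-q^{-1})^2$ factor in Lemma~\ref{lem:rec1}, which is a good consistency check. Finally, the remaining three theorems (\ref{pro:vip2}}--type variants) follow by applying the maps $\rho,\phi,\varphi,\dagger$ of Section~4 to this one, so only this case needs the full argument.
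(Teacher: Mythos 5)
Your overall strategy is the right one -- use Lemma \ref{lem:rec3} to move powers of $\mathfrak{n}=\mathfrak{n}_{i,i+1}$ past $x_{i+3}$, resum the resulting series into shifted $q$-exponentials ${\rm exp}_q(q^{2r}\mathfrak{n})$, and convert those into monomials $x_i^{-r}x_{i+1}^{-r}$ via (\ref{equ:5511}) -- and this is essentially what the paper does. But your description of the intermediate form is wrong, and it leads you to a step that does not exist. If you substitute (\ref{equ:rac2}) for $\mathfrak{n}^k x_{i+3}$ into your double sum, the only factors remaining to the \emph{left} of $x_{i+3}$ are $\mathfrak{n}^0,\mathfrak{n}^1,\mathfrak{n}^2$ (plus the $x_{i+1}$ term); the entire infinite series migrates to the \emph{right} of $x_{i+3}$, where it combines with ${\rm exp}_q(\mathfrak{n})$ and collapses, via (\ref{equ:5511}), to explicit monomials $x_{i+1}^{r}x_i^{r}$. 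So the building blocks are $\mathfrak{n}^{j}x_{i+3}\cdot(\text{explicit monomial})$ with $j\in\{0,1,2\}$, not ${\rm exp}_q(\mathfrak{n})^{-1}x_{i+3}{\rm exp}_q(q^{2a}\mathfrak{n})$. Consequently the quantity $F={\rm exp}_q(\mathfrak{n})^{-1}x_{i+3}{\rm exp}_q(\mathfrak{n})$ never reappears on the right-hand side, and there is no linear equation to solve. Worse, even granting your claimed decomposition, the terms would be $F x_i^{-a}x_{i+1}^{-a}$ for various $a$, so the ``coefficients'' of $F$ are noncommuting operators rather than scalars, and ``solving the scalar-coefficient linear equation for $F$'' is not a legitimate move. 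The paper avoids the double sum altogether: it applies (\ref{equ:rac4}) (the opposite direction of the recursion) to the single sum $x_{i+3}{\rm exp}_q(\mathfrak{n})=\sum_m q^{\binom{m}{2}}x_{i+3}\mathfrak{n}^m/[m]_q^!$, obtains a finite combination of ${\rm exp}_q(q^{2r}\mathfrak{n})\cdot(\text{finite tail})$, and only then left-multiplies once by ${\rm exp}_q(\mathfrak{n})^{-1}$.

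Two further points. First, even after the resummation your argument is not finished: the resulting expression still contains $\mathfrak{n}$, $\mathfrak{n}^2$ and products such as $x_ix_{i+1}x_{i+3}\mathfrak{n}$, and the bulk of the paper's proof is the explicit elimination of $\mathfrak{n}$ via (\ref{equ:3}) followed by reordering via (\ref{equ:1}) to reach the seven displayed terms; your proposal waves at this as ``simplify the $q$-coefficients,'' while locating the ``computational heart'' in the resummation of $d_m$, which is in fact routine (each of $a_m,b_m,c_m,d_m$ is a finite $\mathbb{F}$-linear combination of $q^{2rm}$ after expanding the $q$-integers, so each piece resums to a shifted exponential immediately). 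Second, your closing claim that the remaining three theorems follow from this one by the maps of Section 4 is only partly true: Theorems \ref{pro:vip3} and \ref{pro:vip4} do follow from Theorems \ref{pro:vip2} and \ref{pro:vip1} by applying $\phi$, but Theorem \ref{pro:vip2} concerns the inverse conjugation and requires its own computation starting from (\ref{equ:rac2}); no symmetry of Section 4 carries Theorem \ref{pro:vip1} to it.
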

\begin{proof}
For $m\in\mathbb{N}$ multiply each side of (\ref{equ:rac4}) by ${q^{m \choose 2 }}/{[m]_q^!}$. Sum the resulting equations over $m\in\mathbb{N}$ and evaluate the result using (\ref{equ:qexp}) to get
\begin{equation*}
\begin{aligned}
x_{i+3}{\rm {exp}}_q(\mathfrak{n}_{i,i+1})&=\frac{q^3{\rm {exp}}_q(q^{-4}\mathfrak{n}_{i,i+1})x_{i+3}}{(q-q^{-1})(q^2-q^{-2})}
-\frac{{\rm {exp}}_q(q^{-2}\mathfrak{n}_{i,i+1})x_{i+3}}{(q-q^{-1})^2}\\
&+\frac{{\rm {exp}}_q(\mathfrak{n}_{i,i+1})x_{i+3}}{q^3(q-q^{-1})(q^2-q^{-2})}
-\frac{{\rm {exp}}_q(\mathfrak{n}_{i,i+1})x_{i+3}\mathfrak{n}_{i,i+1}}{q(q-q^{-1})}\\
&+\frac{q{\rm {exp}}_q(q^{-2}\mathfrak{n}_{i,i+1})x_{i+3}\mathfrak{n}_{i,i+1}}{q-q^{-1}}
+\frac{q{\rm {exp}}_q(\mathfrak{n}_{i,i+1})x_{i+3}\mathfrak{n}_{i,i+1}^2}{q+q^{-1}}\\
&-{\rm {exp}}_q(q^2\mathfrak{n}_{i,i+1})x_{i+1}
+(1+q^2){\rm {exp}}_q(\mathfrak{n}_{i,i+1})x_{i+1}\\
&-q^2{\rm {exp}}_q(q^{-2}\mathfrak{n}_{i,i+1})x_{i+1}.
\end{aligned}
\end{equation*}

In the above equation multiply each term on the left by ${\rm {exp}}_q(\mathfrak{n}_{i,i+1})^{-1}$ and use (\ref {equ:5511}) to get that ${\rm {exp}}_q(\mathfrak{n}_{i,i+1})^{-1}x_{i+3}{\rm {exp}}_q(\mathfrak{n}_{i,i+1})$ is equal to
\begin{equation*}
\begin{aligned}
&\frac{q^3x_i^2x_{i+1}^2x_{i+3}}{(q-q^{-1})(q^2-q^{-2})}
-\frac{x_ix_{i+1}x_{i+3}}{(q-q^{-1})^2}
+\frac{x_{i+3}}{q^3(q-q^{-1})(q^2-q^{-2})}\\
&-\frac{x_{i+3}\mathfrak{n}_{i,i+1}}{q(q-q^{-1})}
+\frac{qx_{i}x_{i+1}x_{i+3}\mathfrak{n}_{i,i+1}}{q-q^{-1}}
+\frac{qx_{i+3}\mathfrak{n}_{i,i+1}^2}{q+q^{-1}}\\
&-x_i^{-1}+(1+q^2)x_{i+1}-q^2x_ix_{i+1}^2.
\end{aligned}
\end{equation*}

For notational convenience let $\Psi$ denote the above expression. In $\Psi$ we first eliminate every occurrence of $\mathfrak{n}_{i,i+1}$ using the second equality in ({\ref {equ:3}}). In the resulting expression, we simplify things using the following principle: for each occurrence of $x_{i}$, move it to the far left using (\ref {equ:1}). The above simplification yields the following results.

The expression $-{q^{-1}(q-q^{-1})^{-1}}{x_{i+3}\mathfrak{n}_{i,i+1}}$ is a weighted sum involving the following terms and coefficients:\\
\\
\bigskip
\centerline{
\begin{tabular}[t]{c|c c c}
 term & ${x}_{i+3}$ & ${x}_{i}{x}_{i+3}{x}_{i+1}$ & ${x}_{i+1}$
   \\ \hline
coeff. & $-{(q-q^{-1})^{-2}}$ &
$q^{-2}(q-q^{-1})^{-2}$ & ${q^{-1}(q-q^{-1})^{-1}}$
\\
 \end{tabular}}

The expression $q{(q-q^{-1})^{-1}}x_{i}x_{i+1}x_{i+3}\mathfrak{n}_{i,i+1}$ is a weighted sum involving the following terms and coefficients:\\
\\
\bigskip
\centerline{
\begin{tabular}[t]{c|c c c c}
 term & $x_i{x}_{i+1}x_{i+3}$ & ${x}_{i}^2{x}_{i+1}{x}_{i+3}x_{i+1}$ & ${x}_{i}{x}_{i+3}{x}_{i+1}$& ${x}_{i}{x}_{i+1}^2$
   \\ \hline
coeff. & ${q^2}{(q-q^{-1})^{-2}}$ &
$-{q^2}{(q-q^{-1})^{-2}}$ & ${q}{(q-q^{-1})^{-1}}$ & $-{q}{(q-q^{-1})^{-1}}$
\\
 \end{tabular}}

The expression ${q(q+q^{-1})^{-1}x_{i+3}\mathfrak{n}_{i,i+1}^2}$ is a weighted sum involving the following terms and coefficients:\\
\\
\bigskip
\centerline{
\begin{tabular}[t]{c|c c c}
 term & $x_{i+3}$ & ${x}_{i}{x}_{i+3}x_{i+1}$ & ${x}_{i+1}$
   \\ \hline
coeff. & ${q^3}{(q-q^{-1})^{-1}(q^2-q^{-2})^{-1}}$ &
$-{q^2}{(q-q^{-1})^{-2}}$ & $-{q^3}{(q-q^{-1})^{-1}}$
\\
 \end{tabular}}
\bigskip
\centerline{
\begin{tabular}[t]{c| c c}
 term &  ${x}_{i}^2x_{i+3}{x}_{i+1}^2$ & $x_ix_{i+1}^2$
   \\ \hline
coeff.  & ${q}{(q-q^{-1})^{-1}(q^2-q^{-2})^{-1}}$ & ${q^3}{(q-q^{-1})^{-1}}$
\\
 \end{tabular}}

Evaluating $\Psi$ using the above comments, we get the result.
\end{proof}

\begin{thm}
\label{pro:vip2}
For $i\in \mathbb{Z}_4$, the following relation holds on every nonzero finite-dimensional ${\square}_q$-module:
\begin{flalign*}
{\rm {exp}}_q(\mathfrak{n}_{i,i+1})x_{i+3}{\rm {exp}}_q(\mathfrak{n}_{i,i+1})^{-1}&=
x_{i+1}-\frac{x_{i+3}}{(q-q^{-1})^2}+\frac{qx_{i+1}^{-1}x_{i+3}x_{i+1}}{(q-q^{-1})(q^2-q^{-2})}\\
&+\frac{q^{-1}x_{i+1}x_{i+3}x_{i+1}^{-1}}{(q-q^{-1})(q^2-q^{-2})}.
\end{flalign*}
\end{thm}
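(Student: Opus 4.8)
The plan is to derive Theorem \ref{pro:vip2} from Theorem \ref{pro:vip1} by applying one of the algebra maps introduced in Section 4, exactly as the analogous relations in Theorems \ref{pro:81} and \ref{pro:82} are obtained from their partners. Both theorems concern the conjugation action of $\mathrm{exp}_q(\mathfrak n_{i,i+1})$ on $x_{i+3}$; Theorem \ref{pro:vip1} handles the $(-1,+1)$ ordering of the $q$-exponential factors, while Theorem \ref{pro:vip2} handles the $(+1,-1)$ ordering. So I would look for a map in $G$ (the group of Lemma \ref{lem:dd}) or the map $\dagger$ of Lemma \ref{lem:m2} that fixes the pair $\{x_i,x_{i+1}\}$ in a way that sends $\mathfrak n_{i,i+1}$ to something conjugate-friendly, while moving $x_{i+3}$ to a controlled place.

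Concretely, the map $\dagger:\square_q\to\square_{q^{-1}}$ fixes every $x_j$ and sends $\mathfrak n_{i,i+1}\mapsto-\mathfrak n_{i,i+1}$, hence (by Lemma \ref{lem:inv} applied in $\square_{q^{-1}}$) it sends $\mathrm{exp}_q(\mathfrak n_{i,i+1})$ to $\mathrm{exp}_{q^{-1}}(-\mathfrak n_{i,i+1})=\mathrm{exp}_q(\mathfrak n_{i,i+1})^{-1}$. Since $\dagger$ is an antiisomorphism, applying it to the identity in Theorem \ref{pro:vip1} reverses the order of the three factors on the left: $\dagger$ sends $\mathrm{exp}_q(\mathfrak n_{i,i+1})^{-1}x_{i+3}\mathrm{exp}_q(\mathfrak n_{i,i+1})$ to $\mathrm{exp}_q(\mathfrak n_{i,i+1})^{-1}\,x_{i+3}\,\mathrm{exp}_q(\mathfrak n_{i,i+1})$ read in $\square_{q^{-1}}$ — wait, one must track carefully: $\dagger$ reverses the product and inverts $\mathrm{exp}$, so the image is $\mathrm{exp}_q(\mathfrak n)^{+1}\cdot x_{i+3}\cdot\mathrm{exp}_q(\mathfrak n)^{-1}$ evaluated in $\square_{q^{-1}}$, which is exactly the left side of a $\square_{q^{-1}}$-version of Theorem \ref{pro:vip2}. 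Then, as in the proof of Lemma \ref{lem:rec3} for \eqref{equ:rac3}, I would replace $q$ by $q^{-1}$ throughout the resulting $\square_{q^{-1}}$-identity to land back in $\square_q$; one checks that the right-hand side of Theorem \ref{pro:vip1} is invariant (up to the $q\leftrightarrow q^{-1}$ substitution, which I should verify termwise does produce the stated right-hand side of Theorem \ref{pro:vip2} after possibly also applying $\rho^2$ or $\phi$ to relabel indices).

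The remaining subtlety — and the step I expect to be the main obstacle — is bookkeeping of indices and of the $q\leftrightarrow q^{-1}$ swap so that the final right-hand side matches the printed formula in Theorem \ref{pro:vip2}: the right side of Theorem \ref{pro:vip1} involves $x_i,x_{i+1},x_{i+3}$ and $x_i^{-1}$ with coefficients in $q$, whereas the target involves $x_{i+1},x_{i+3}$ and $x_{i+1}^{\pm1}$ with different-looking denominators. This suggests that a single application of $\dagger$ is not enough; I would likely need to precompose or postcompose with a map from $G$ that sends $x_i\leftrightarrow x_{i+1}$ and $x_{i+3}\leftrightarrow x_{i+2}$ (or a power of $\rho$) so that $\mathfrak n_{i,i+1}$ is preserved or sent to $\mathfrak n_{i,i+1}$ and $x_{i+3}$ is sent to $x_{i+3}$. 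Lemma \ref{lem:m3}'s $\phi$ with the appropriate index shift is the natural candidate. Once the correct composite map $\gamma$ is identified (an antiautomorphism of $\square_q$ fixing $\mathfrak n_{i,i+1}$ up to sign and sending $x_{i+3}\mapsto x_{i+3}$, $x_i\mapsto x_{i+1}$), applying $\gamma$ to Theorem \ref{pro:vip1} and simplifying the image of its right-hand side using \eqref{equ:1} and Definition \ref{def:ginv} should yield Theorem \ref{pro:vip2} directly.

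As a fallback, if no such clean $\gamma$ exists, I would prove Theorem \ref{pro:vip2} from scratch along the lines of the proof of Theorem \ref{pro:vip1}: start from \eqref{equ:rac2} (or its partner), multiply by $q^{\binom m2}/[m]_q^!$, sum over $m\in\mathbb N$ using \eqref{equ:qexp} to express $\mathrm{exp}_q(\mathfrak n_{i,i+1})x_{i+3}$ as a combination of $\mathrm{exp}_q(q^{2r}\mathfrak n_{i,i+1})x_{i+3}\mathfrak n_{i,i+1}^{s}$ and $\mathrm{exp}_q(q^{2r}\mathfrak n_{i,i+1})x_{i+1}$ terms, then multiply on the right by $\mathrm{exp}_q(\mathfrak n_{i,i+1})^{-1}$ and use \eqref{equ:5512} to clear all the $q$-exponentials in favor of $x_i^{\pm1},x_{i+1}^{\pm1}$. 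The leftover $\mathfrak n_{i,i+1}$ factors are then eliminated via \eqref{equ:3} and the products reordered with \eqref{equ:1}; the cubic Serre relation \eqref{equ:2} collapses the residual terms, exactly as in the displayed tables in the proof of Theorem \ref{pro:vip1}. In either approach the hard part is the same: ensuring the dozen-odd terms combine into precisely the four-term right-hand side stated, which is a finite but delicate computation.
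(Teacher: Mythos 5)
Your primary route --- deducing Theorem \ref{pro:vip2} from Theorem \ref{pro:vip1} by a symmetry --- does not work, and the obstruction is structural rather than a matter of bookkeeping. Write $E={\rm {exp}}_q(\mathfrak{n}_{i,i+1})$. First, $\dagger$ is $\mathbb{F}$-linear, so it does not touch the scalars $q^{\binom{n}{2}}/[n]_q^!$; hence $E^{\dagger}={\rm {exp}}_q(-\mathfrak{n}_{i,i+1})$, which is \emph{not} $E^{-1}$ (by Lemma \ref{lem:inv} the inverse is ${\rm {exp}}_{q^{-1}}(-\mathfrak{n}_{i,i+1})$); in $\square_{q^{-1}}$ the element $E^{\dagger}$ is precisely $F^{-1}$ for $F={\rm {exp}}_{q^{-1}}(\mathfrak{n}_{i,i+1})$, the natural $q$-exponential of $\square_{q^{-1}}$. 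Consequently $\dagger$ sends $E^{-1}x_{i+3}E\mapsto E^{\dagger}x_{i+3}(E^{\dagger})^{-1}=F^{-1}x_{i+3}F$: the product reversal and the inversion of the exponential cancel, and you land on the $\square_{q^{-1}}$-version of the \emph{left-hand side of Theorem \ref{pro:vip1} itself}, never on $Ex_{i+3}E^{-1}$. The same cancellation persists after the $q\leftrightarrow q^{-1}$ substitution. Composing with elements of $G$ does not help: the only antiautomorphism in $G$ fixing $\mathfrak{n}_{i,i+1}$ is the edge reflection $\phi$ (suitably conjugated by $\rho$), and it necessarily swaps $x_{i+2}\leftrightarrow x_{i+3}$, which is exactly why $\phi$ carries Theorem \ref{pro:vip1} to Theorem \ref{pro:vip4} and Theorem \ref{pro:vip2} to Theorem \ref{pro:vip3}. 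Every available product-reversing map fixing the edge $\{i,i+1\}$ swaps $x_{i+2}\leftrightarrow x_{i+3}$, and every product-preserving composite sending $E\mapsto E^{-1}$ does too; the map $\gamma$ you are hoping for does not exist in this toolkit. This is precisely why the paper proves Theorems \ref{pro:vip1} and \ref{pro:vip2} independently.

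Your fallback is, in outline, the paper's actual proof: one starts from (\ref{equ:rac2}), weights by $q^{-2m}q^{\binom{m}{2}}/[m]_q^!$ (the extra $q^{-2m}$ makes the left side sum to ${\rm {exp}}_q(q^{-2}\mathfrak{n}_{i,i+1})x_{i+3}$, after which one multiplies on the left by $x_{i+1}^{-1}x_i^{-1}$ and on the right by $E^{-1}$, invoking (\ref{equ:5512})), then eliminates $\mathfrak{n}_{i,i+1}$ via (\ref{equ:3}) and reorders using (\ref{equ:1}), (\ref{equ:inv11}), (\ref{equ:inv12}) and (\ref{equ:inv21}); the cubic relation (\ref{equ:2}) is not needed again at this stage, having already been absorbed into Lemma \ref{lem:rec3}. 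But as written your fallback is a plan, not a proof: the entire content of Theorem \ref{pro:vip2} lies in reducing the resulting nine-term expression in $x_i^{\pm 1},x_{i+1}^{\pm1},x_{i+3},\mathfrak{n}_{i,i+1}$ to the stated four terms, and none of that computation is carried out. So the proposal has a genuine gap: the preferred route fails for the reason above, and the viable route is only sketched.
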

\begin{proof}
For $m\in\mathbb{N}$ multiply each side of (\ref {equ:rac2}) by $q^{-2m}{q^{m \choose 2 }}/{[m]_q^!}$. Sum the resulting equations over $m\in\mathbb{N}$ and evaluate the result using (\ref{equ:qexp}) to get
\begin{equation*}
\begin{aligned}
{\rm {exp}}_q(q^{-2}\mathfrak{n}_{i,i+1})x_{i+3}&=\frac{x_{i+3}{\rm {exp}}_q(q^2\mathfrak{n}_{i,i+1})}{q^3(q-q^{-1})(q^2-q^{-2})}
-\frac{x_{i+3}{\rm {exp}}_q(\mathfrak{n}_{i,i+1})}{(q-q^{-1})^2}\\
&+\frac{q^3x_{i+3}{\rm {exp}}_q(q^{-2}\mathfrak{n}_{i,i+1})}{(q-q^{-1})(q^2-q^{-2})}
+\frac{\mathfrak{n}_{i,i+1}x_{i+3}{\rm {exp}}_q(\mathfrak{n}_{i,i+1})}{q(q-q^{-1})}\\
&-\frac{\mathfrak{n}_{i,i+1}x_{i+3}{\rm {exp}}_q(q^2\mathfrak{n}_{i,i+1})}{q^3(q-q^{-1})}
+\frac{\mathfrak{n}_{i,i+1}^2x_{i+3}{\rm {exp}}_q(q^2\mathfrak{n}_{i,i+1})}{q^3(q+q^{-1})}\\
&+\frac{x_{i+1}{\rm {exp}}_q(q^{-2}\mathfrak{n}_{i,i+1})}{q^2}
-\frac{(q+q^{-1})x_{i,i+1}{\rm {exp}}_q(\mathfrak{n}_{i,i+1})}{q^3}\\
&+\frac{x_{i+1}{\rm {exp}}_q(q^2\mathfrak{n}_{i,i+1})}{q^4}.
\end{aligned}
\end{equation*}

In the above equation multiply each term on the left by $x_{i+1}^{-1}x_i^{-1}$ and on the right by ${\rm {exp}}_q(\mathfrak{n}_{i,i+1})^{-1}$, and then use (\ref {equ:5512}) to get that ${\rm {exp}}_q(\mathfrak{n}_{i,i+1})x_{i+3}{\rm {exp}}_q(\mathfrak{n}_{i,i+1})^{-1}$ is equal to
\begin{equation*}
\begin{aligned}
&\frac{x_{i+1}^{-1}x_i^{-1}x_{i+3}x_i^{-1}x_{i+1}^{-1}}{q^3(q-q^{-1})(q^2-q^{-2})}
-\frac{x_{i+1}^{-1}x_i^{-1}x_{i+3}}{(q-q^{-1})^2}
+\frac{q^3x_{i+1}^{-1}x_i^{-1}x_{i+3}x_{i}x_{i+1}}{(q-q^{-1})(q^2-q^{-2})}\\
&+\frac{x_{i+1}^{-1}x_i^{-1}\mathfrak{n}_{i,i+1}x_{i+3}}{q(q-q^{-1})}
-\frac{x_{i+1}^{-1}x_i^{-1}\mathfrak{n}_{i,i+1}x_{i+3}x_{i}^{-1}x_{i+1}^{-1}}{q^3(q-q^{-1})}\\
&+\frac{x_{i+1}^{-1}x_i^{-1}\mathfrak{n}_{i,i+1}^2x_{i+3}x_{i}^{-1}x_{i+1}^{-1}}{q^3(q+q^{-1})}
+\frac{x_{i+1}^{-1}x_i^{-1}x_{i+1}x_{i}x_{i+1}}{q^2}\\
&-\frac{(q+q^{-1})x_{i+1}^{-1}x_i^{-1}x_{i+1}}{q^3}+\frac{x_{i+1}^{-1}x_i^{-1}x_{i+1}x_{i}^{-1}x_{i+1}^{-1}}{q^4}.
\end{aligned}
\end{equation*}

For notational convenience let $\Phi$ denote the above expression. In $\Phi$ we first eliminate every occurrence of $\mathfrak{n}_{i,i+1}$ using the first equality in ({\ref {equ:3}}). In the resulting expression, we simplify things using (\ref{equ:1}), (\ref {equ:inv11}), and (\ref {equ:inv12}). Our guiding principle is to bring $x_i,x_i^{-1}$ together for cancellation, and also to bring $x_{i+1},x_{i+1}^{-1}$ together for cancellation. The above simplification yields the following results.

The expression ${q^3{(q-q^{-1})^{-1}(q^2-q^{-2})^{-1}}x_{i+1}^{-1}x_i^{-1}x_{i+3}x_{i}x_{i+1}}$ is a weighted sum involving the following terms and coefficients:\\
\\
\bigskip
\centerline{
\begin{tabular}[t]{c|c c c}
 term & ${x}_{i+1}^{-1}x_{i+3}x_{i+1}$ & ${x}_{i}^{-1}$ & ${x}_{i+1}^{-1}x_i^{-2}$
   \\ \hline
coeff. & ${q}(q-q^{-1})^{-1}(q^2-q^{-2})^{-1}$ &
${q^4}{(q^2-q^{-2})^{-1}}$ & $-{q^3}{(q+q^{-1})^{-1}}$
\\
 \end{tabular}}

The expression $q^{-1}(q-q^{-1})^{-1}{x_{i+1}^{-1}x_i^{-1}\mathfrak{n}_{i,i+1}x_{i+3}}$ is a weighted sum involving the following terms and coefficients:\\
\\
\bigskip
\centerline{
\begin{tabular}[t]{c|c c}
 term & ${x}_{i+1}^{-1}x_i^{-1}x_{i+3}$ & ${x}_{i+3}$
   \\ \hline
coeff. & ${(q-q^{-1})^{-2}}$ &
$-(q-q^{-1})^{-2}$
\\
 \end{tabular}}

The expression $-q^{-3}(q-q^{-1})^{-1}{x_{i+1}^{-1}x_i^{-1}\mathfrak{n}_{i,i+1}x_{i+3}x_{i}^{-1}x_{i+1}^{-1}}$ is a weighted sum involving the following terms and coefficients:\\
\\
\bigskip
\centerline{
\begin{tabular}[t]{c|c c}
 term & ${x}_{i+1}^{-1}x_i^{-1}x_{i+3}x_i^{-1}{x}_{i+1}^{-1}$ & ${x}_{i+3}x_i^{-1}{x}_{i+1}^{-1}$
   \\ \hline
coeff. & $-{q^{-2}(q-q^{-1})^{-2}}$ &
${q^{-2}(q-q^{-1})^{-2}}$
\\
 \end{tabular}}

The expression $q^{-3}(q+q^{-1})^{-1}{x_{i+1}^{-1}x_i^{-1}\mathfrak{n}_{i,i+1}^2x_{i+3}x_{i}^{-1}x_{i+1}^{-1}}$ is a weighted sum involving the following terms and coefficients:\\
\\
\bigskip
\centerline{
\begin{tabular}[t]{c|c c }
 term & ${x}_{i+1}^{-1}x_i^{-1}x_{i+3}x_i^{-1}{x}_{i+1}^{-1}$ & ${x}_{i+3}x_i^{-1}{x}_{i+1}^{-1}$
   \\ \hline
coeff. & ${q^{-1}(q-q^{-1})^{-1}(q^2-q^{-2})^{-1}}$ &
$-{q^{-2}(q-q^{-1})^{-2}}$
\\
 \end{tabular}}
\bigskip
\centerline{
\begin{tabular}[t]{c|c c c}
 term & ${x}_{i}^{-1}$ & $x_i^{-2}x_{i+1}^{-1}$ & ${x}_{i+1}x_{i+3}{x}_{i+1}^{-1}$
   \\ \hline
coeff. & $-{q^{-4}(q^2-q^{-2})^{-1}}$ & $-{q^{-3}(q+q^{-1})^{-1}}$ & $q^{-1}(q-q^{-1})^{-1}(q^2-q^{-2})^{-1}$
\\
 \end{tabular}}

The expression $q^{-2}{x_{i+1}^{-1}x_i^{-1}x_{i+1}x_{i}x_{i+1}}$ is a weighted sum involving the following terms and coefficients:\\
\\
\bigskip
\centerline{
\begin{tabular}[t]{c|c c c}
 term & ${x}_{i+1}$ & ${x}_{i}^{-1}$ & ${x}_{i+1}^{-1}x_i^{-2}$
   \\ \hline
coeff. & $1$ &
$1-q^2$ & $(q-q^{-1})^2$
\\
 \end{tabular}}

The expression $-{q^{-3}(q+q^{-1})x_{i+1}^{-1}x_i^{-1}x_{i+1}}$ is a weighted sum involving the following terms and coefficients:\\
\\
\bigskip
\centerline{
\begin{tabular}[t]{c|c c}
 term & ${x}_{i}^{-1}$ & $x_{i+1}^{-1}{x}_{i}^{-2}$
   \\ \hline
coeff. & $-q^{-1}(q+q^{-1})$ &
$q^{-2}{(q^2-q^{-2})}$
\\
 \end{tabular}}

Using (\ref {equ:inv21}) the expression $q^{-4}x_{i+1}^{-1}x_i^{-1}x_{i+1}x_{i}^{-1}x_{i+1}^{-1}$ is a weighted sum involving the following terms and coefficients:\\
\\
\bigskip
\centerline{
\begin{tabular}[t]{c|c c}
 term & ${x}_{i}^{-2}x_{i+1}^{-1}$ & $x_{i+1}^{-1}{x}_{i}^{-2}$
   \\ \hline
coeff. & $q^{-3}(q+q^{-1})^{-1}$ &
$q^{-5}(q+q^{-1})^{-1}$
\\
 \end{tabular}}

Evaluating $\Phi$ using the above comments we get the result.
\end{proof}

We now analyze ({\ref {equ:71}}) for the case $j=i+2$.
\begin{thm}
\label{pro:vip3}
For $i\in \mathbb{Z}_4$, the following relation holds on every nonzero finite-dimensional ${\square}_q$-module:
\begin{align*}
{\rm {exp}}_q(\mathfrak{n}_{i,i+1})^{-1}x_{i+2}{\rm {exp}}_q(\mathfrak{n}_{i,i+1})&=
x_{i}-\frac{x_{i+2}}{(q-q^{-1})^2}
+\frac{qx_{i}x_{i+2}x_{i}^{-1}}{(q-q^{-1})(q^2-q^{-2})}\\
&+\frac{q^{-1}x_{i}^{-1}x_{i+2}x_{i}}{(q-q^{-1})(q^2-q^{-2})}.
\end{align*}
\end{thm}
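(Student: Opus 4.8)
The plan is to obtain Theorem \ref{pro:vip3} from Theorem \ref{pro:vip2} by applying the antiautomorphism $\phi$ of Lemma \ref{lem:m3}, in the same spirit as the partner identities throughout the paper (for instance (equ:vip2) from (equ:vip1) in Theorem \ref{pro:81}, and (equ:vip4) from (equ:vip3) in Theorem \ref{pro:82}). Recall that $\phi$ is an $\mathbb{F}$-linear algebra antiautomorphism of $\square_q$ with $\phi(x_k) = x_{1-k}$ for $k \in \mathbb{Z}_4$; reading off the first equality in (equ:3) it follows that $\phi(\mathfrak{n}_{j,j+1}) = \mathfrak{n}_{-j,-j+1}$, and since $\phi$ is $\mathbb{F}$-linear it commutes with forming ${\rm {exp}}_q$ of a nilpotent element and with passing to the inverse of an invertible operator.

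First I would instantiate Theorem \ref{pro:vip2} at the index $-i$ in place of $i$ (legitimate, since that theorem holds for every index in $\mathbb{Z}_4$), and then apply $\phi$ to each side of the resulting identity on a given nonzero finite-dimensional $\square_q$-module. Because $\phi$ reverses products and fixes scalars, the left-hand side ${\rm {exp}}_q(\mathfrak{n}_{-i,-i+1})\,x_{-i+3}\,{\rm {exp}}_q(\mathfrak{n}_{-i,-i+1})^{-1}$ is carried to ${\rm {exp}}_q(\mathfrak{n}_{i,i+1})^{-1}\,x_{i+2}\,{\rm {exp}}_q(\mathfrak{n}_{i,i+1})$, using $\phi(\mathfrak{n}_{-i,-i+1}) = \mathfrak{n}_{i,i+1}$ and $\phi(x_{-i+3}) = x_{1-(-i+3)} = x_{i-2} = x_{i+2}$, which is precisely the left-hand side of the desired identity. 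On the right-hand side the scalar coefficients are untouched, $\phi(x_{-i+1}) = x_i$ and $\phi(x_{-i+3}) = x_{i+2}$, while the reversal of products turns $x_{-i+1}^{-1}x_{-i+3}x_{-i+1}$ into $x_i x_{i+2}x_i^{-1}$ and $x_{-i+1}x_{-i+3}x_{-i+1}^{-1}$ into $x_i^{-1}x_{i+2}x_i$; collecting the four terms reproduces exactly $x_i - \tfrac{x_{i+2}}{(q-q^{-1})^2} + \tfrac{q\,x_i x_{i+2} x_i^{-1}}{(q-q^{-1})(q^2-q^{-2})} + \tfrac{q^{-1}\,x_i^{-1} x_{i+2} x_i}{(q-q^{-1})(q^2-q^{-2})}$. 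As $i$ ranges over $\mathbb{Z}_4$ so does $-i$, so this yields the claim for every $i$.

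The only point that requires a word of care — the same one implicit in every ``apply $\phi$'' step in the paper — is that an algebra antiautomorphism of $\square_q$ carries an operator identity valid on every nonzero finite-dimensional $\square_q$-module to another such identity; here this is unproblematic, because $\phi$ permutes the generators $x_k$, which act invertibly by Proposition \ref{pro:fin}, and is linear on the finitely many (module by module) powers of $\mathfrak{n}_{i,i+1}$ occurring in ${\rm {exp}}_q(\mathfrak{n}_{i,i+1})^{\pm 1}$. If one preferred not to invoke $\phi$, one could instead argue directly as in the proofs of Theorems \ref{pro:vip1} and \ref{pro:vip2}: multiply (equ:rac3) by $(-1)^m q^{-\binom{m}{2}}/[m]_q^!$, sum over $m \in \mathbb{N}$ and use Lemma \ref{lem:inv} with (equ:qexp) to express ${\rm {exp}}_q(\mathfrak{n}_{i,i+1})^{-1}x_{i+2}$ through the operators ${\rm {exp}}_q(q^{2c}\mathfrak{n}_{i,i+1})\,x_{i+2}\,\mathfrak{n}_{i,i+1}^{\,k}$ and ${\rm {exp}}_q(q^{2c}\mathfrak{n}_{i,i+1})\,x_i$, then right-multiply by ${\rm {exp}}_q(\mathfrak{n}_{i,i+1})$ and use (equ:5511), (equ:5512) to replace each surviving $q$-exponential product by a monomial in $x_i^{\pm 1}, x_{i+1}^{\pm 1}$, and finally eliminate $\mathfrak{n}_{i,i+1}$ via (equ:3) and collapse the outcome using (equ:1). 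The only genuine labor in that second route is the terminal bookkeeping of this reduction; the $\phi$ argument avoids it entirely, which is why I would present the proof that way.
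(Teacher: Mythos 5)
Your proposal is correct and is exactly the paper's proof: the paper establishes Theorem \ref{pro:vip3} by applying the antiautomorphism $\phi$ of Lemma \ref{lem:m3} to each side of the identity in Theorem \ref{pro:vip2}, which is the argument you spell out (with the index substitution $i\mapsto -i$ made explicit). The additional care you take in tracking $\phi(x_k)=x_{1-k}$ and $\phi(\mathfrak{n}_{j,j+1})=\mathfrak{n}_{-j,-j+1}$, and in justifying why $\phi$ transports operator identities between modules, is sound and matches the paper's (implicit) reasoning.
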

\begin{proof}
Apply the map ${\phi}$ from Lemma {\ref {lem:m3}} to each side of the equation in Theorem {\ref {pro:vip2}}.
\end{proof}

\begin{thm}
\label{pro:vip4}
For $i\in \mathbb{Z}_4$, the following relation holds on every nonzero finite-dimensional ${\square}_q$-module:
\begin{flalign*}
{\rm {exp}}_q(\mathfrak{n}_{i,i+1})x_{i+2}{\rm {exp}}_q(\mathfrak{n}_{i,i+1})^{-1}=x_{i+2}-x_{i+1}^{-1}+\frac{qx_{i+2}x_ix_{i+1}}{q-q^{-1}}
-\frac{x_ix_{i+2}x_{i+1}}{q(q-q^{-1})}\\
+\frac{q^3x_{i+2}x_{i}^2x_{i+1}^2}{(q-q^{-1})(q^2-q^{-2})}
+\frac{qx_i^2x_{i+2}x_{i+1}^2}{(q-q^{-1})(q^2-q^{-2})}-\frac{q^2x_ix_{i+2}x_{i}x_{i+1}^2}{(q-q^{-1})^2}.
\end{flalign*}
\end{thm}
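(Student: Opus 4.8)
The plan is to mirror the derivation of Theorem~\ref{pro:vip2}, replacing the roles of the generators according to the symmetry that already powered Theorem~\ref{pro:vip3}. Concretely, I would apply the map $\varphi$ from Lemma~\ref{lem:m3} to each side of the equation in Theorem~\ref{pro:vip1}. Recall that $\varphi$ is an antiautomorphism fixing the pairs $x_1\leftrightarrow x_2$, $x_3\leftrightarrow x_0$, and that it sends $\mathfrak{n}_{1,2}\mapsto\mathfrak{n}_{1,2}$, $\mathfrak{n}_{3,0}\mapsto\mathfrak{n}_{3,0}$, and interchanges $\mathfrak{n}_{0,1}\leftrightarrow\mathfrak{n}_{2,3}$. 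Just as in the proof of Theorem~\ref{pro:vip3}, one checks that a suitable conjugate of $\varphi$ (or equivalently an application of $\varphi$ composed with the rotation $\rho$ to place the index~$i$ correctly) carries the index pattern $(i,i+1,i+3)$ appearing in Theorem~\ref{pro:vip1} onto the index pattern $(i,i+1,i+2)$ appearing here, since $x_{i+3}$ and $x_{i+2}$ are swapped while $x_i,x_{i+1}$ are interchanged and $\mathfrak{n}_{i,i+1}$ is preserved up to the antiautomorphism.

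The key steps, in order, are: (1) identify precisely which element $g$ of the group $G\cong D_4$ from Lemma~\ref{lem:dd} has the property that $g$ fixes $\mathfrak{n}_{i,i+1}$ (as an antiautomorphism) and sends $x_{i+3}\mapsto x_{i+2}$ — by the tables in Section~4 this is the map $\varphi$ when $i$ is chosen appropriately, or more uniformly $\rho^{2i}\varphi\rho^{-2i}$ type conjugation, but the cleanest route is to note that it suffices to prove the statement for one value of $i$ by applying $\rho$, and then for that fixed $i$ use $\varphi$ directly; (2) apply $g$ to the identity of Theorem~\ref{pro:vip1}, using that for an antiautomorphism $\gamma$ one has $(\exp_q(\mathfrak{n})^{-1}x\exp_q(\mathfrak{n}))^\gamma=\exp_q(\mathfrak{n}^\gamma)x^\gamma\exp_q(\mathfrak{n}^\gamma)^{-1}$ because $\gamma$ reverses the order of a product and commutes with the power series $\exp_q$ (since $\exp_q(\psi)^\gamma=\exp_q(\psi^\gamma)$ term by term, and inversion is sent to inversion); (3) substitute the images of all the monomials on the right-hand side of Theorem~\ref{pro:vip1} under $g$, using that $\gamma$ reverses each monomial, so e.g.\ $x_i x_{i+1} x_{i+3}\mapsto x_{i+3}^\gamma x_{i+1}^\gamma x_i^\gamma = x_{i+2} x_i x_{i+1}$ and similarly for the other six terms; (4) collect and simplify the resulting expression and verify it coincides term-by-term with the claimed right-hand side of Theorem~\ref{pro:vip4}.

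There is, strictly, no serious obstacle once the right group element is pinned down: the whole argument is the same one-line reduction used for Theorems~\ref{pro:82}, \ref{pro:vip2}~(via~$\phi$), and \ref{pro:vip3}~(via~$\phi$). The main point requiring care is step~(1): one must make sure the chosen element of $G$ simultaneously (a) fixes $\mathfrak{n}_{i,i+1}$ rather than sending it to some other $\mathfrak{n}_{k,k+1}$, and (b) swaps $x_{i+3}\leftrightarrow x_{i+2}$ while keeping $x_i,x_{i+1}$ among themselves, so that the left-hand side of Theorem~\ref{pro:vip1} maps exactly to the left-hand side of Theorem~\ref{pro:vip4}. Consulting the transformation tables after Lemmas~\ref{lem:m1} and~\ref{lem:m3}, the map $\varphi$ does precisely this when $i=3$ (it fixes $\mathfrak{n}_{3,0}$ and swaps $x_2\leftrightarrow x_3$, i.e.\ $x_{i+3}\leftrightarrow x_{i+2}$, while swapping $x_1\leftrightarrow x_2$... ) — and since both theorems are stated uniformly for all $i\in\mathbb{Z}_4$, it is enough to reduce to this one index using the rotation $\rho$ and then apply $\varphi$. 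Thus the proof is: apply $\rho$ to reduce to a convenient $i$, then apply $\varphi$ to the identity of Theorem~\ref{pro:vip1}, reverse each monomial, and simplify.
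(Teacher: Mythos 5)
Your proposal is correct and is essentially the paper's own argument: the paper proves Theorem~\ref{pro:vip4} in one line by applying the antiautomorphism $\phi$ to each side of Theorem~\ref{pro:vip1}, and your use of $\varphi$ instead works equally well, since for each $i$ the map $\varphi$ sends $\mathfrak{n}_{i,i+1}\mapsto\mathfrak{n}_{j,j+1}$, $x_{i+3}\mapsto x_{j+2}$, $x_i\mapsto x_{j+1}$, $x_{i+1}\mapsto x_j$ for a suitable $j$, with $j$ running over all of $\mathbb{Z}_4$ as $i$ does. Two small points: the preliminary reduction via $\rho$ is unnecessary for exactly this reason (neither map needs to \emph{fix} $\mathfrak{n}_{i,i+1}$, only to permute the family $\{\mathfrak{n}_{k,k+1}\}$ compatibly), and your parenthetical description of $\varphi$ at $i=3$ is off --- $\varphi$ swaps $x_1\leftrightarrow x_2$ and $x_0\leftrightarrow x_3$ (it is $\phi$ that swaps $x_2\leftrightarrow x_3$), so for $i=3$ it sends $x_{i+3}=x_2\mapsto x_1=x_{i+2}$, which is what your argument actually needs.
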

\begin{proof}
Apply the map ${\phi}$ from Lemma {\ref {lem:m3}} to each side of the equation in Theorem {\ref {pro:vip1}}.
\end{proof}

\section{Acknowledgement}
This paper was written while the author was a graduate student at the University of
Wisconsin-Madison. The author would like to thank his advisor, Paul Terwilliger, for offering
many valuable ideas and suggestions.

As part of computational evidence, the open software SageMath (see \cite{sage}) was used to verify our main results Theorems {\ref {pro:81}}, {\ref {pro:82}} and Theorems {\ref {pro:vip1}}--{\ref {pro:vip4}} on low dimensional irreducible $\square_q$-modules.


\begin{thebibliography}{99}

\bibitem{alnajjar} H.~Alnajjar.
\newblock
Leonard pairs associated with the equitable generators of the quantum algebra
$U_q(\mathfrak{sl}_2)$.
\newblock{\em
Linear Multilinear Algebra} 59 (2011) 1127--1142.

\bibitem{bas} P.~Baseilhac.
\newblock
An integrable structure related with tridiagonal algebras.
\newblock{\em
Nuclear Phys.
B} 705 (2005) 605--619.






\bibitem{neubauer}
D.~Funk-Neubauer.
\newblock Bidiagonal pairs, the Lie algebra
 $\mathfrak{sl}_2$, and the quantum group
 $U_q(\mathfrak{sl}_2)$.
\newblock{\em
J. Algebra Appl}. 12 (2013) 1250207, 46 pp.



\bibitem{huang}
H.~Huang.
\newblock
The classification of Leonard triples of QRacah type.
\newblock{\em
Linear Algebra Appl.} 436 (2012) 1442--1472.

\bibitem{miki}
K.~Miki.
\newblock
Finite dimensional modules for the $q$-tetrahedron algebra.
\newblock{\em
Osaka J. Math.} Volume 47, Number 2 (2010), 559-589.



\bibitem{irt}
T.~Ito, H.~Rosengren, P.~Terwilliger.
\newblock
Evaluation modules for the $q$-tetrahedron algebra.
\newblock{\em
Linear Algebra Appl.}  451 (2014) 107--168.

\bibitem{uqsl2hat}
T.~Ito and P.~Terwilliger.
\newblock
Tridiagonal pairs and the quantum affine algebra
$U_q(\widehat {\mathfrak sl}_2)$.
\newblock
{\em
Ramanujan J.} 13 (2007) 39--62.



\bibitem{nonnil}
T.~Ito and P.~Terwilliger.
\newblock
Two non-nilpotent linear transformations that satisfy the cubic $q$-Serre 
relations.
\newblock {\em
J. Algebra Appl.}  6 (2007) 477--503.


\bibitem{qtet}
T.~Ito and P.~Terwilliger.
\newblock
The $q$-tetrahedron algebra and its finite-dimensional irreducible modules.
\newblock
{\em Comm. Algebra}  35 (2007) 3415--3439.

\bibitem{equit}
T.~Ito, P.~Terwilliger, C.~Weng.
\newblock
The quantum algebra $U_q(\mathfrak{sl}_2)$ and its equitable presentation.
\newblock{\em
J. Algebra}  298  (2006) 284--301.





\bibitem{lus1}
G.~Lusztig.
\newblock
Quantum deformations of certain simple modules over enveloping algebras.
\newblock{\em Adv. Math.} 70 (1988) 237--249.

\bibitem{lus2}
G.~Lusztig.
\newblock
On quantum groups.
\newblock{\em J. Algebra} 131 (1990) 464--475.

\bibitem{t1} T.~Tanisaki.
\newblock
Lie algebras and quantum groups.
\newblock
Kyoritsu Publishers, 2002.

\bibitem{tersym}
P.~Terwilliger.
\newblock
The equitable presentation for the quantum group 
$U_q(\mathfrak{g})$ associated with a symmetrizable Kac-Moody algebra
$\mathfrak{g}$.
\newblock{\em
J. Algebra} 298 (2006) 302--319.


\bibitem{uawe}
P.~Terwilliger.
\newblock
The universal Askey-Wilson algebra and the equitable
presentation of
$U_q(\mathfrak{sl}_2)$.
\newblock {\em 
SIGMA} 
{\bf 7} (2011) 099, 26 pp.

\bibitem{fduq}
P.~Terwilliger.
\newblock
Finite-dimensional irreducible
$U_q(\mathfrak{sl}_2)$-modules from the
equitable point of view.
\newblock{\em Linear Algebra Appl.}  439 (2013) 358--400.

\bibitem{paul} P.~Terwilliger.
\newblock
Billiard Arrays and finite-dimensional irreducible $U_q(\mathfrak{sl}_2)$-modules.
\newblock{\em
Linear Algebra Appl.}
461 (2014) 211--270.

\bibitem{luz} P.~Terwilliger.
\newblock
The Lusztig automorphism of $U_q(\mathfrak{sl}_2)$ from the equitable point of view.
\newblock {\em
J. Algebra Appl.} 16 (2017), no. 12, 1750235, 26pp.

\bibitem{p1} P.~Terwilliger.
\newblock
The $q$-Onsager algebra and the positive part of $U_q(\widehat{\mathfrak{sl}_2})$.\\
\newblock{\em
Linear Algebra Appl.} 521 (2017) 19--56.

\bibitem{sage}
The Sage Developers.
\newblock
SageMath, the Sage Mathematics Software System (Version 7.0)
\newblock 
(2016), http://www.sagemath.org.

\bibitem{boyd}
C.~Worawannotai.
\newblock
Dual polar graphs, the quantum algebra 
 $U_q(\mathfrak{sl}_2)$,
 and Leonard systems of dual $q$-Krawtchouk type.
\newblock{\em
Linear Algebra Appl.} 438 (2013) 443--497.

\bibitem{yy}
Y.~Yang.
\newblock
Finite-dimensional irreducible $\square_q$-modules and their Drinfel'd polynomials
\newblock{\em Linear Algebra Appl.} 537 (2018) 160--190.





\end{thebibliography}
\end{document}